\DeclareSymbolFont{sfoperators}{OT1}{ptm}{m}{n}
\DeclareSymbolFontAlphabet{\mathsf}{sfoperators}
\def\operator@font{\mathgroup\symsfoperators}
\numberwithin{equation}{section}
\newtheorem{thm}{Theorem}[section]
\newtheorem{lem}[thm]{Lemma}
\newtheorem{prop}[thm]{Proposition}
\newtheorem{cor}[thm]{Corollary}
\newtheorem{assumption}[thm]{Assumption}
\theoremstyle{remark}
\newtheorem{remark}[thm]{Remark}
\newtheorem{rmk}[thm]{Remark}
\def\th@newremark{\th@remark\thm@headfont{\bfseries}}
\def\bdiamond{\mathop{\mathpalette\bdi@mond\relax}}
\newcommand\bdi@mond[2]{%
	\vcenter{\hbox{\m@th
			\scalebox{\ifx#1\displaystyle 2.6\else1.8\fi}{$#1\diamond$}%
	}}%
}
\def\bDiamond{\mathop{\mathpalette\bDi@mond\relax}}
\newcommand\bDi@mond[2]{%
	\vcenter{\hbox{\m@th
			\scalebox{\ifx#1\displaystyle 2.6\else1.2\fi}{$#1\Diamond$}%
	}}%
}
\definecolor{darkgreen}{rgb}{0.1,0.7,0.1}
\definecolor{darkred}{rgb}{0.7,0.1,0.1}
\definecolor{darkblue}{rgb}{0,0,0.7}
\newcommand{\PP}{\mathbb{P}}
\newcommand{\RR}{\mathbb{R}}
\newcommand{\dus}{\frac{\Delta_{j(s)}U}{t_{j(s)+1}-t_{j(s)}}}
\newcommand{\unm}{u^{(n)}_{m}}
\newcommand{\tw}{\widetilde{w}}
\newcommand{\un}{u^{(n)}}
\newcommand{\cC}{\mathcal{C}}
\newcommand{\fF}{\mathcal{F}}
\newcommand{\tv}{\widetilde{v}}
\newcommand{\iI}{\mathcal{I}}
\newcommand{\dws}{\frac{\Delta_{j(s)}W}{t_{j(s)+1}-t_{j(s)}}}
\newcommand{\nN}{\mathcal{N}}
\newcommand{\sS}{\mathcal{S}}
\newcommand{\xX}{\mathcal{X}}
\newcommand{\yY}{\mathcal{Y}}
\newcommand{\E}{\mathbf{E}}
\newcommand{\N}{\mathbf{N}}
\newcommand{\R}{\mathbb{R}}
\newcommand{\V}{\mathbf{V}}
\newcommand{\sn}{\widetilde{N}\big( v; s \big) }
\newcommand{\1}{\mathbf{1}}
\newcommand{\eps}{\varepsilon}
\newcommand{\ssm}{\mathcal{S}_{mar}}
\newcommand{\ssq}{\mathcal{S}_{qua}}
\newcommand{\sm}{\mathcal{S}^*_{\text{mar}}}
\newcommand{\sq}{\mathcal{S}^*_{\text{qua}}}
\newcommand{\cov}{{\operator@font cov}}
\newcommand{\var}{{\operator@font var}}
\newcommand{\corr}{{\operator@font corr}}
\newcommand{\diam}{{\operator@font diam}}
\newcommand{\Av}{{\operator@font Av}}
\newcommand{\trig}{{\operator@font trig}}
\newcommand{\Enh}{{\operator@font Enh}}
\newcommand{\ini}{\text{ini}}
\newcommand{\mar}{\text{mar}}
\newcommand{\noi}{\text{noi}}
\newcommand{\qua}{\text{qua}}
\colorlet{symbols}{blue!90!black}
\colorlet{testcolor}{green!60!black}
\def\${|\!|\!|}
\def\DeclareSymbol#1#2#3{\expandafter\gdef\csname MH@symb@#1\endcsname{\tikz[baseline=#2,scale=0.15,draw=symbols]{#3}}\expandafter\gdef\csname MH@symb@#1s\endcsname{\scalebox{0.7}{\tikz[baseline=#2,scale=0.15,draw=symbols]{#3}}}}
\def\<#1>{\csname MH@symb@#1\endcsname}
\setlist[itemize]{topsep=3pt,itemsep=1.5pt,parsep=0pt}
\def\cent#1{\mathopen{{\langle\kern-0.3em\rangle}}#1\mathclose{{\langle\kern-0.3em\rangle}}}
\def\d{\partial}
\begin{document}

\title{A Wong-Zakai theorem for the stochastic mass-critical NLS}
\author{Chenjie Fan$^1$ and Weijun Xu$^2$}
\institute{Academy of Mathematics and Systems Science and Hua Loo-Keng Key Laboratory of Mathematics, Chinese Academy of Sciences, Beijing, 100190, China, \email{cjfanpku@gmail.com}
\and Beijing International Center for Mathematical Research, Peking University, China, \email{weijunxu@bicmr.pku.edu.cn}}

\maketitle

\begin{abstract}
We prove a Wong-Zakai theorem for the defocusing stochastic mass-critical nonlinear Schr\"odinger equation (NLS) on $\R$. The main ingredient are careful mixtures of bootstrapping arguments at both deterministic and stochastic levels. Several subtleties arising from the proof mark the difference between the dispersive case and corresponding situations in SDEs and parabolic stochastic PDEs, as well as the difference between the large-$n$ case and the limiting ($n=\infty$) case. 
\end{abstract}

\setcounter{tocdepth}{2}
\microtypesetup{protrusion=false}
\tableofcontents
\microtypesetup{protrusion=true}

\section{Introduction}

\subsection{Statement of the result}
We continue our study of the defocusing mass-critical stochastic nonlinear Schr\"odinger equation on $\R$ with a conservative noise. Consider the model
\begin{equation} \label{eq:snls_crit}
i \d_t u + \Delta u = |u|^{4} u + u \circ \dot{W}\;, \quad u(0,\cdot) = f \in L_{\omega}^{\infty}L_{x}^{2} := L^{\infty} \big(\Omega,L_x^2(\R) \big). 
\end{equation}
Here, $W$ is a Wiener process on real-valued functions with proper integrability conditions, and $\circ$ denotes the Stratonovich product. This choice of noise and product assures pathwise mass conservation.

\begin{assumption} \label{as:noise_initial}
	Throughout, we assume the initial data $f \in L_{\omega}^{\infty}L_{x}^{2}$, and there exists $\Lambda_{\ini} > 0$ such that
	\begin{equation}
	\|f\|_{L_{\omega}^{\infty}L_{x}^{2}} \leq \Lambda_{\ini} \lesssim 1. 
	\end{equation}
	The Wiener process $W(t,\cdot)$ has the form
	\begin{equation}
	W(t,x) := \sum_{k \in \N} B_{k}(t) V_{k}(x), 
	\end{equation}
	where $\{B_k\}$ are independent standard Brownian motions on the probability space $(\Omega, \fF, \PP)$, and $\{V_k\}$ is a sequence of real-valued functions on $\R$. Furthermore, they satisfy
	\begin{equation}\label{eq:summable}
	\sum_{k \in \N} \big( \|V_k\|_{L_x^1} + \|V_k\|_{L_x^{\infty}} \big) =: \sum_{k \in \N} \Lambda_k \leq \Lambda_{\noi}\lesssim 1
	\end{equation}
	for some $\Lambda_{\noi} > 0$. 
\end{assumption}

\begin{rmk}
From the view point of the assumption \eqref{eq:summable} and the fact that the topic of  this article is of well-posedness nature, the noise we considered is not very different from the special form $W(x,t)=V(x)B(t)$ for $V\in L_{x}^{1}\cap L_{x}^{\infty}$. However, non-degenerate noise does arise in some other situations, so we emphasize that our results also work for infinite dimensional noise. It should be remarked we do not require any smoothness of the noise in the space variable. 

It may also be of interest to extend our results to more general noise based on the language of $\gamma$-Radonifying operators. We do not handle this technical issue here, though we believe most of our arguments can also work in that situation. Also, this paper does not aim to handle very singular noise (such as space-time white noise) that requires renormalisation. 
\end{rmk}

Equation \eqref{eq:snls_crit} is called mass-critical since its deterministic version (in general dimension $d$) has the following scaling property: suppose $v$ satisfies
\begin{equation} \label{eq:nls_crit}
i \d_t v + \Delta v = |v|^{\frac{4}{d}}v,\quad v(0)=v_{0}\in L_{x}^{2}(\R^{d}),
\end{equation}
then for every $\lambda>0$, the rescaled function $v_{\lambda}(t,x):= \lambda^{-\frac{d}{2}} v(t/\lambda^2, x/\lambda)$ satisfies the same equation with initial data $\lambda^{-\frac{d}{2}} v_0(\cdot/\lambda)$, whose $L_{x}^2$ norm is invariant under the scaling. It is called defocusing since the associated Hamiltonian is coercive. 

The well-posedness for \eqref{eq:nls_crit} is highly non-trivial for general $L_{x}^{2}$ initial data. It was proved by Dodson (\cite{dodson2012global, dodson2467global, dodson2016global}) that for every initial data in $L_{x}^2(\R^d)$, $v$ has a global space-time bound in a suitable Strichartz space. As a consequence, $v$ scatters. 

These results are crucial if one wants to construct global solutions to the stochastic equation. In \cite{snls_mass_critical, snls_subcritical_approx, snls_critical_Zhang}, a global solution to \eqref{eq:snls_crit} was constructed. The purpose of this article is to present a Wong-Zakai type result for \eqref{eq:snls_crit}. 

Let $\pi^{(n)}$ be a sequence of partitions of $[0,1]$ of the form
\begin{equation}
\pi^{(n)} := \big\{0 = t_{0}^{(n)} <t_{1}^{(n)} <\cdots <t_{n}^{(n)} = 1\big\} 
\end{equation}
such that $\|\pi^{(n)}\| := \max_{j} \big| t_{j+1}^{(n)} - t_{j}^{(n)}\big| \rightarrow 0$ as $n \rightarrow +\infty$. For every $n$, let $W^{(n)}$ be such that $W^{(n)}(t^{(n)}_j) = W(t_j^{(n)})$ for every $t_{j}^{(n)} \in \pi^{(n)}$, and linearly interpolates in between. Let $u^{(n)}$ be the solution to
\begin{equation} \label{eq:un}
i \d_t u^{(n)} + \Delta u^{(n)} = |u^{(n)}|^{4} u^{(n)} + u^{(n)} \frac{{\rm d} W^{(n)}}{{\rm d} t}\;, \quad u^{(n)}(0,\cdot) = u(0,\cdot)\;.  
\end{equation}
Unlike the SPDE \eqref{eq:snls_crit} which needs to be formulated via It\^o's stochastic integration, \eqref{eq:un} is well defined in the classical sense since $W^{(n)}$ is piecewise linear. Thus, \eqref{eq:un} is simply a classical PDE (with randomness), and we can rewrite as
\begin{equation}\label{eq: un2}
i\d_t u^{(n)} + \Delta u^{(n)} = |u^{(n)}|^{4} u^{(n)} + u^{(n)} \cdot \frac{W(t_{j+1}^{(n)})-W(t_{j}^{(n)})}{t_{j+1}^{(n)}-t_{j}^{(n)}}\;, \quad t\in [t_{j}^{(n)},t_{j+1}^{(n)}]\;.
\end{equation}
Given the results of Dodson (\cite{dodson2012global},\cite{dodson2467global},\cite{dodson2016global}), for every realization of $W$, one can see that $u^{(n)}$ is globally well-posed. Such global existence of $u^{(n)}$ can be derived from a purely perturbative viewpoint (see Remark~\ref{rem: useful}). Our main result is the following. 

\begin{thm} \label{th:main}
	 Let $u$ be the solution to \eqref{eq:snls_crit} as constructed in \cite{snls_mass_critical, snls_subcritical_approx}. Let $\un$ be the solution to \eqref{eq:un} with the same initial data $u(0,\cdot) \in L_{\omega}^{\infty}\big(\Omega, L_{x}^{2}(\R)\big)$. Then for every $\rho \geq 1$, we have
	\begin{equation}\label{eq: mainconv}
	\E \|u^{(n)} - u\|_{\xX(0,1)}^{\rho} \rightarrow 0
	\end{equation}
	as $n \rightarrow +\infty$, where we used the shorthand notation
	\begin{equation} \label{eq:norms}
	\xX_{1}(\iI) = L_{t}^{\infty} \big(\iI, L_x^2(\R)\big), \; \xX_2(\iI) = L_{t}^{5}\big(\iI, L_{x}^{10}(\R)\big), \; \xX(\iI) = \xX_1(\iI) \cap \xX_2(\iI).
	\end{equation}
\end{thm}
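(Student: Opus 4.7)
The plan is to combine a pathwise perturbation theory for the mass-critical NLS with a Wong-Zakai-type analysis of the stochastic integral against $u$. The two key ingredients are (i) uniform-in-$n$ global Strichartz bounds on $u^{(n)}$ and $u$, and (ii) a stability estimate on short intervals where both $\xX_2$ norms are small, whose forcing is the nonlinearity difference plus the noise difference.

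First I would establish a pathwise bound $\|u^{(n)}\|_{\xX(0,1)} \leq M(\omega)$ uniform in $n$, with $\E M^{\rho} < \infty$. This is the content of the earlier works combined with a gauge transform: setting $v^{(n)} := e^{-iW^{(n)}(t,x)} u^{(n)}$ (well-defined since $W^{(n)}$ is Lipschitz in $t$ pathwise), the $u^{(n)} \partial_t W^{(n)}$ forcing is cancelled and $v^{(n)}$ solves a mass-critical NLS with lower-order perturbations depending on $\nabla W^{(n)}$, $\Delta W^{(n)}$, and $|\nabla W^{(n)}|^2$. Dodson's global theory, together with a short-time bootstrap controlling these perturbations via the summability \eqref{eq:summable}, gives the $\xX_2$ bound; mass conservation gives the $\xX_1$ bound for free. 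The analogous bound for $u$ is already available. Given this, for each small $\eta > 0$ partition $[0,1]$ into $N = N(M, \eta)$ intervals $I_j = [\tau_j, \tau_{j+1}]$ on which $\|u\|_{\xX_2(I_j)} + \|u^{(n)}\|_{\xX_2(I_j)} \leq \eta$. Let $w := u^{(n)} - u$. Duhamel on $I_j$ gives
\begin{equation*}
w(t) = e^{i(t-\tau_j)\Delta} w(\tau_j) - i\int_{\tau_j}^t e^{i(t-s)\Delta} \bigl(|u^{(n)}|^{4} u^{(n)} - |u|^{4} u\bigr) ds - i \int_{\tau_j}^t e^{i(t-s)\Delta} \mathcal{R}_n(s)\, ds,
\end{equation*}
where $\mathcal{R}_n$ encodes the full noise difference, including, via the It\^o interpretation of $u \circ \dot W$, the Stratonovich correction $-\tfrac{i}{2} F_V u$ with $F_V := \sum_k V_k^2$. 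Strichartz plus H\"older plus $\eta$-smallness bound the nonlinear piece by $C\eta^{4} \|w\|_{\xX_2(I_j)}$, which is absorbed to the left for $\eta$ small.

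Splitting $u^{(n)} \partial_s W^{(n)} = w\, \partial_s W^{(n)} + u\, \partial_s W^{(n)}$, the first piece is again a $w$-dependent bootstrap term whose contribution is controlled by a discrete Burkholder-Davis-Gundy estimate across the partition intervals $[t_k^{(n)}, t_{k+1}^{(n)}]$. The genuinely Wong-Zakai piece is
\begin{equation*}
\Phi_n(t) := \int_0^t e^{i(t-s)\Delta} \Bigl( u(s)\, \partial_s W^{(n)}(s) - u(s)\, \dot W(s) + \tfrac{i}{2} F_V(x)\, u(s) \Bigr) ds.
\end{equation*}
Writing $\partial_s W^{(n)}$ as $(W(t_{k+1}^{(n)})-W(t_k^{(n)}))/(t_{k+1}^{(n)}-t_k^{(n)})$ on each partition subinterval and applying It\^o's formula to $u(s)$ inside each such subinterval, the sum over $k$ splits into a martingale part (which BDG and summation by parts send to zero in $\xX(I_j)$ as $\|\pi^{(n)}\| \to 0$) and a quadratic-covariation part that asymptotically cancels $\tfrac{i}{2} F_V u$. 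The crucial point is to upgrade this convergence, which is classical in an $L^2_x$ norm, to convergence in the Strichartz norm $\xX(I_j)$; this uses the $L^1_x + L^{\infty}_x$ summability of $V_k$ from \eqref{eq:summable} and Strichartz regularity of $u$ to control the oscillation of $u$ inside each $[t_k^{(n)}, t_{k+1}^{(n)}]$.

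The main obstacle is closing the stochastic bootstrap \emph{simultaneously} in $n$ and over the $\omega$-dependent partition: because $N(M,\eta)$ depends on the realization, one must work on the high-probability event $\{M \leq M_0\}$ and iterate the local perturbation estimate across $I_1, \ldots, I_N$, with errors compounding geometrically. The iteration requires both that the $\xX_2$-smallness on $I_j$ persist for $u^{(n)}$ uniformly in $n$ (ensured once $\eta$ is chosen depending on $M_0$ via Step 1), and that $\|\Phi_n\|_{\xX(I_j)}$ be small in $L^{\rho}(\Omega)$ for every $j$ at once — which is why one needs moment bounds rather than convergence in probability. Unlike the SDE or parabolic SPDE setting, the Schr\"odinger semigroup provides no regularization, so the forcing estimates must be tracked in Strichartz norms with sharp constants, and the entire cost of the piecewise-linear approximation is paid through the temporal oscillation of $u$ inside each $[t_k^{(n)}, t_{k+1}^{(n)}]$. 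Once the local $\xX(I_j)$-bound on $w$ is closed for each $j$, summing across $j$ and taking $\eta \to 0$ after $n \to \infty$ yields \eqref{eq: mainconv}.
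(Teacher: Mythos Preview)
Your outline has the right shape (uniform Strichartz bounds, then local stability with small $\xX_2$ norm, then analysis of the noise difference), but two steps would not close under the paper's hypotheses.

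\textbf{The gauge transform requires spatial smoothness of the noise.} Setting $v^{(n)}=e^{-iW^{(n)}}u^{(n)}$ produces lower-order terms involving $\nabla W^{(n)}$, $\Delta W^{(n)}$ and $|\nabla W^{(n)}|^2$, i.e.\ $\nabla V_k$ and $\Delta V_k$. But Assumption~\ref{as:noise_initial} only gives $V_k\in L^1_x\cap L^\infty_x$; no derivatives are assumed (the paper stresses this explicitly). So Step~1 of your plan is not available. The paper obtains the uniform bound \eqref{eq:unm_uniform_bd} without any gauge: it expands Duhamel once more at the partition point $[s]$ to isolate a discrete martingale piece $\sS_{\mar}$ and a quadratic-variation piece $\sS_{\qua}$, controls each by dispersive estimates and Burkholder, and runs a bootstrap on random subintervals defined by the sizes of $\sS_{\mar}^*,\sS_{\qua}^*$. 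This uses only $L^p$ norms of $V_k$.

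\textbf{The time-oscillation control on $u$ has no source.} Your convergence of $\Phi_n$ rests on bounding $u(s)-u([s])$ on each $[t_k^{(n)},t_{k+1}^{(n)}]$, and you appeal to ``Strichartz regularity of $u$''. But Strichartz bounds give no H\"older continuity in time; for $L_x^2$ data the Schr\"odinger flow has none. The paper's route is: first prove the uniform-in-$(n,m)$ bound and deduce a stability statement (Corollary~\ref{cor: stab}) allowing one to replace $f$ and $V_k$ by smooth approximants at arbitrarily small $L^\rho_\omega\xX$ cost; then, working with the \emph{truncated} equations for $u_m$ and $u^{(n)}_m$ and with $f\in L^\infty_\omega H^1_x$, prove persistence of $H^1$ regularity and hence $u_m\in L^\rho_\omega \cC^\beta_t H^\alpha_x$ (Propositions~\ref{pr:unm_H1}, \ref{pr:unm_H1_time_cont}). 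Only after this reduction does the oscillation estimate you need become available. Your proposal skips this reduction, and also skips the truncation $u_m,u^{(n)}_m$ that the paper uses precisely to keep the nonlinearity from destroying $\omega$-integrability in the stochastic bootstrap; the paper compares $u^{(n)}$ to $u$ through the chain $u^{(n)}\to u^{(n)}_m\to u_m\to u$, not directly.
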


\begin{remark}
	Even to ensure the convergence \eqref{eq: mainconv} for a single fixed $\rho \geq 1$, it is crucial to assume our initial data to be in $L_{\omega}^{\infty}L_{x}^{2}$ rather than $L_{\omega}^{\rho'}L_{x}^{2}$ no matter how large $\rho'$ is. Also, the convergence \eqref{eq: mainconv} works for any finite $\rho$ but not for $\rho = \infty$. 
\end{remark}

\begin{remark}
The above result can be extended to sub-critical models.
\end{remark}

\begin{remark}
Due to the mass-critical nature of the problem and the fact we are working with general $L_{x}^{2}$ data, the dynamics in the time interval $[0,1]$ should not be understood as a short time dynamics, but rather a truly nonlinear one (instead of being a perturbation of some linear dynamics). In fact, the exactly same argument works for $[0,T]$ for any $T>0$. On the other hand, our result does not give information on the asymptotic behaviour of the solution. One needs strong decay estimates for the solutions in order to have a solution theory on the time interval of the whole $\RR$. It is still out of reach at this moment, but see \cite{stochastic_linear_schrodinger_decay} for the linear case with small noise and \cite{snls_scattering} for the critical nonlinear case with a noise with a time-decay noise. 
\end{remark}

\begin{remark} \label{rm:canonical}
One can also understand this result as an alternative construction of the solution to \eqref{eq:snls_crit}. The convergence \eqref{eq: mainconv} has two levels of meanings. First, it says $u^{(n)}$ converges in $L_{\omega}^{\rho}\xX(0,1)$ and the limit solves \eqref{eq:snls_crit}. Second, the limiting process coincides with the process $u$ constructed in \cite{snls_mass_critical}. The second point is meaningful for the following reason: even though the construction in \cite{snls_mass_critical} is canonical in the sense that it is the unique limit under a natural class of approximations (see Theorem \ref{th:snls_mass_critical} for more details), the uniqueness is not in the sense of a prescribed function space. Hence, Theorem~\ref{th:main} could also serve as a demonstration that the solution constructed in \cite{snls_mass_critical} is the right one to consider. 
\end{remark}

\begin{remark}
Pathwise mass conservation is not strictly necessary as long as one can control the growth of $L_x^2$-norm of the solution independent of the realization of the path. Nevertheless, from the viewpoint of dispersive PDEs, we still restrict ourselves to the model where pathwise mass conservation law holds for the future study of long time behavior. 

On the other hand, such (deterministic) a priori control does play an essential role in this article (especially for $L_{\omega}^{\rho}$ convergence). One may also want to state and prove similar results for energy (sub-)critical problems with $H_x^1$ initial data, but in that case one may only get convergence in probability instead of $L_{\omega}^{\rho}$ due to the lack of such control for energy growth. 
\end{remark}

\subsection{Review of the construction of solution to (\ref{eq:snls_crit})}

The It\^o form of \eqref{eq:snls_crit} is formally given by
\begin{equation}
i \d_t u + \Delta u = |u|^{4} u + u \dot{W} - \frac{i}{2} \V^2 u, 
\end{equation}
where the product between $u$ and $\dot{W}$ is in the It\^o sense, and $\V^2 = \sum_{k} V_{k}^{2}$ is the It\^o-Stratonovich correction. 

Let $\phi \in \cC_{c}^{\infty}(\R^{+}, \R)$ such that $\phi \in [0,1]$, $\phi = 1$ on $[0,1]$ and vanishes outside the interval $[0,2]$. For every $m>0$, let $\phi_m(\cdot) := \phi(\cdot/m)$. We summarize the construction in \cite{snls_mass_critical} in the following. 

\begin{thm} [\cite{snls_mass_critical}] 
	\label{th:snls_mass_critical}
	Fix $f \in L_{\omega}^{\infty}L_{x}^{2}$. For every $\rho \geq 1$ and $m>0$, there exists a unique $u_m \in L_{\omega}^{\rho}\xX(0,1)$ adapted to the filtration generated by $W$ such that
	\begin{equation} \label{eq:um_Duhamel}
	\begin{split}
	u_m(t) = &e^{it\Delta}f - i \int_{0}^{t} e^{i(t-s)\Delta} \Big( \phi_m\big( \|u_m\|_{\xX_2(0,t)} \big) |u_m(s)|^{4} u_m(s) \Big) {\rm d}s\\
	&- i \int_{0}^{t} e^{i(t-s)\Delta} \big( u_{m}(s) {\rm d} W(s) \big)- \frac{1}{2} \int_{0}^{t} e^{i(t-s)\Delta} \big( \V^2 u_m(s) \big) {\rm d}s, 
	\end{split}
	\end{equation}
	where the integral with respect to $W$ is in the It\^o sense, and the formula holds in $L_{\omega}^{\rho}\xX(0,1)$. 
	
	As a consequence, we deduce that the same $u_m$ belongs to $L_{\omega}^{\rho}\xX(0,1)$ for every $\rho \geq 1$. Furthermore, $\forall \rho \geq 1$, $\{u_m\}_{m}$ is Cauchy in $L_{\omega}^{\rho}\xX(0,1)$ as $m \rightarrow +\infty$, and the limit $u$ satisfies the Duhamel formula
	\begin{equation} \label{eq:u_Duhamel}
	\begin{split}
	u(t) = &e^{it\Delta}f - i \int_{0}^{t} e^{i(t-s)\Delta} \Big( |u(s)|^{4} u(s) \Big) {\rm d}s\\
	&- i \int_{0}^{t} e^{i(t-s)\Delta} \big( u(s) {\rm d} W(s) \big) - \frac{1}{2} \int_{0}^{t} e^{i(t-s)\Delta} \big( \V^2 u(s) \big) {\rm d}s, 
	\end{split}
	\end{equation}
	where the stochastic integral is again in the It\^o's sense, and the two sides are equal in $L_{\omega}^{\rho}\xX(0,1)$. 
\end{thm}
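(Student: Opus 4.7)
The plan has three stages: for each fixed $m$, construct $u_m$ by a Picard iteration in a stochastic Strichartz space; use pathwise mass conservation to iterate local-to-global; and establish a uniform-in-$m$ moment bound on $\|u_m\|_{\xX_2(0,1)}$ from which the Cauchy property and the limiting equation \eqref{eq:u_Duhamel} follow.

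For local existence, I would work on the space of $W$-adapted processes in $L_\omega^\rho \xX(0,T)$ with initial value $f$, and consider the map $\Phi_m$ defined by the right-hand side of \eqref{eq:um_Duhamel}. Deterministic Strichartz estimates control the linear evolution and the It\^o--Stratonovich correction. The cutoff $\phi_m$ caps the effective Strichartz norm inside the nonlinearity by $2m$, so $|u_m|^4 u_m$ contributes a Lipschitz term with constant proportional to $(2m)^4 T^{\alpha}$ for some $\alpha>0$. The It\^o integral is bounded by a stochastic Strichartz estimate combining dual Strichartz with the BDG inequality; the summability hypothesis \eqref{eq:summable} gives a factor $T^{1/2}\Lambda_{\noi}^{1/2}$. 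Choosing $T$ small depending on $m, \Lambda_{\ini}, \Lambda_{\noi}, \rho$ produces a contraction and a unique adapted fixed point $u_m$ on $[0,T]$.

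To globalize to $[0,1]$, I would exploit that the Stratonovich product with real-valued $W$ is exactly arranged so that pathwise mass conservation $\|u_m(t)\|_{L_x^2}=\|f\|_{L_x^2}$ holds almost surely on the existence interval. Together with the cutoff bound $\|u_m\|_{\xX_2(0,t)} \leq 2m$, the local step size is uniform in time, and iteration covers $[0,1]$; the same construction produces $u_m\in L_\omega^\rho \xX(0,1)$ for every $\rho\geq 1$. The Cauchy property then reduces to a uniform moment bound
$$\sup_{m}\, \E\, \|u_m\|_{\xX_2(0,1)}^{\rho} < \infty \quad \text{for all } \rho\geq 1.$$
Granted this, on the event $\{\|u_m\|_{\xX_2} \vee \|u_{m'}\|_{\xX_2}\leq M\}$ both cutoffs are inactive and $u_m-u_{m'}$ satisfies a linearized stochastic NLS that can be closed by Strichartz, stochastic Strichartz, and Gronwall; on the complement, Chebyshev together with the moment bound gives a contribution of order $M^{-\rho}$ which closes by H\"older. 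Term-by-term passage to the limit in \eqref{eq:um_Duhamel}, noting that $\phi_m(\|u_m\|_{\xX_2(0,t)})\to 1$ almost surely, then yields \eqref{eq:u_Duhamel}.

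The main obstacle is the uniform moment bound above. Dodson's deterministic global $\xX_2$ bound for \eqref{eq:nls_crit} cannot be applied to $u_m$ directly since the multiplicative noise is not a perturbation in the Strichartz topology over a full unit time interval. The strategy, as carried out in \cite{snls_mass_critical}, is a pathwise gauge-type transformation that absorbs the $u_m \dot W$ term, reducing $u_m$ to a perturbation of the deterministic mass-critical NLS whose remainders involve quadratic-variation-type random quantities with all Gaussian moments via BDG and \eqref{eq:summable}. Dodson's global bound combined with long-time Strichartz perturbation theory on the transformed equation, followed by transferring back, yields the uniform control on $\|u_m\|_{\xX_2(0,1)}$. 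Executing this transformation carefully, given only $L_x^{1}\cap L_x^{\infty}$ spatial regularity of the $V_k$'s, is the most delicate step.
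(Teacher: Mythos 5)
Your plan begins with a Picard iteration for $u_m$ in $L_\omega^\rho\xX(0,T)$, but the paper explicitly rules this route out: Remark~\ref{rm: umuniqueness} states that ``the construction of solution $u_m$ to \eqref{eq:um_Duhamel} does not follow from a Picard iteration regime, but rather via sub-critical approximations.'' The underlying obstruction is the one spelled out in the background discussion: if $u\in L_\omega^\rho\yY$ then $|u|^4u$ lands in $L_\omega^{\rho/5}\yY'$, so a fixed-point map closes in $\omega$-integrability only after bringing in a non-perturbative input (pathwise mass conservation). The cutoff $\phi_m$ does not repair this for the difference estimate $N_m(u)-N_m(v)$, since the term $(\phi_m(\|u\|_{\xX_2})-\phi_m(\|v\|_{\xX_2}))|v|^4v$ is controlled by $\tfrac{1}{m}\|u-v\|_{\xX_2}\|v\|_{\xX_2}^5$ and the factor $\|v\|_{\xX_2}^5$ is not in $L_\omega^\infty$. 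Also, your globalization step misreads the truncation: the cutoff does \emph{not} enforce $\|u_m\|_{\xX_2(0,t)}\leq 2m$; it only switches off the nonlinearity once the norm reaches $2m$, after which $u_m$ evolves by the linear stochastic flow whose $\xX_2$-norm must still be controlled separately.

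The third stage is also not the route taken. You attribute to \cite{snls_mass_critical} a pathwise gauge transformation that absorbs $u_m\,\dot W$; this is not what that paper (or the present one) does. The gauge $e^{-iW}$ requires spatial smoothness of $W$ to control commutators, whereas Assumption~\ref{as:noise_initial} only gives $V_k\in L_x^1\cap L_x^\infty$. The actual mechanism for the uniform moment bound is a local Duhamel re-expansion on the stochastic increments combined with a Da Prato--Debussche-type stability lemma (see Proposition~\ref{prop: snlsstable}, cited from \cite[Prop.~4.6]{snls_mass_critical}), a random partition of $[0,1]$ into intervals where the source terms $\sS_{\mar},\sS_{\qua}$ (or their continuous-time analogues) are small, and a bootstrap on each such interval. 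This is precisely the structure reproduced in Section~\ref{sec:unm_bd} of the present paper to prove Theorem~\ref{th:unm_uniform_bd_stable}, and Remark~1.10 notes that the Section~3 argument ``recovers the results in \cite{snls_mass_critical}.'' Your sketch correctly identifies that the uniform $L_\omega^\rho$ bound on $\|u_m\|_{\xX_2(0,1)}$ is the crux and that the Cauchy property and passage to the limit then follow by a truncation--Chebyshev argument (this part matches the paper's own reduction, cf.\ Proposition~\ref{pr:unm_to_un}), but both the local construction and the key a priori bound rest on approaches that the paper explicitly disavows or replaces.
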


\begin{rmk}
In \cite{snls_mass_critical}, the results were stated for spatially smooth noise, but they are valid with essentially same proof only with Assumption~\ref{as:noise_initial} in this article. Furthermore, the current article considers a large class of discretisations of the model \eqref{eq:snls_crit} with stronger estimates, and hence one can also use the arguments here to recover the results in \cite{snls_mass_critical} with the noise in Assumption~\ref{as:noise_initial}. 
\end{rmk}

\begin{rmk} \label{rm: umuniqueness}

Note that the construction of solution $u_m$ to \eqref{eq:um_Duhamel} does not follow from a Picard iteration regime, but rather via sub-critical approximations. But we do have natural uniqueness for \eqref{eq:um_Duhamel} in the sense that there is only one element in the prescribed space which satisfies the formula \eqref{eq:um_Duhamel}. 

On the other hand, as already mentioned in Remark~\ref{rm:canonical}, the uniqueness of $u$ is not in the sense of prescribed function space but rather as the unique limit of natural approximations. But this solution should not be understood as a weak solution derived from compactness arguments, which are usually not unique for a given initial data. We finally remark that it is natural to expect $u=u_{m}$ for $t \leq \tau_{m}$, where $\tau_{m}$ is the stopping time when the $\chi_{2}$-norm of $u$ hits $m$. We do not have a proof for this. If this is true, then we will also have uniqueness for the original equation \eqref{eq:snls_crit} in the sense of prescribed function space. 
\end{rmk}

\subsection{Background}

\subsubsection{Mass-critical NLS}

Mass-critical NLS is a typical model for nonlinear dispersive equations. The local well-posedness is well known, and can be established via a Picard iteration scheme with Strichartz estimates. One may refer to \cite{cazenave1989some} as well as the textbooks \cite{cazenave2003semilinear} and \cite{tao2006nonlinear}. Global well-posedness for the (defocusing) mass-critical NLS with general $L_{x}^{2}$ initial data has been a famous open problem, and was finally solved by Dodson in \cite{dodson2012global, dodson2467global, dodson2016global} (see also the reference therein for more background). We summarize Dodson's result for the 1D model below. 

\begin{thm} [Dodson]
	\label{th:Dodson} Let $v$ solve the equation
	\begin{equation} \label{eq:nls_crit_app}
	i \d_t v + \Delta v = |v|^{4} v\;, \quad v(0,\cdot)=v_{0}\in L^2(\R). 
	\end{equation}
	Then $v$ is global and
	\begin{equation} \label{eq: sc}
	\|v\|_{\xX_2(\R)} \lesssim_{\|v_{0}\|_{L_{x}^{2}}} 1, 
	\end{equation}
	where we recall from \eqref{eq:norms} that $\xX_2(\iI) = L_{t}^{5} \big( \iI, L_{x}^{10}(\RR) \big)$. As a consequence, there exists $v_{+}\in L_{x}^{2}$ depending on $v_0$ such that
	\begin{equation}
	\|v(t)-e^{it\Delta}v_{+}\|_{L_{x}^{2}} \rightarrow 0
	\end{equation}
    as $t \rightarrow \infty$. 
\end{thm}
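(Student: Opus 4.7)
The plan is to follow Dodson's induction-on-mass and concentration-compactness strategy. First, I would establish small-data global well-posedness and scattering via a Picard iteration in $\xX_2$ based on the Strichartz estimates for $e^{it\Delta}$ on $\R$; using mass conservation, the quantity $L(M) := \sup\{\|v\|_{\xX_2(\R)} : v \text{ solves \eqref{eq:nls_crit_app} with } \|v_0\|_{L^2(\R)} \leq M\}$ is non-decreasing in $M$ and finite for $M$ small, so one may set $M^* := \sup\{M : L(M) < \infty\} \in (0, +\infty]$ and note that the conclusion of the theorem is equivalent to $M^* = +\infty$. I would then argue by contradiction, assuming $M^* < \infty$.

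Second, applying the $L^2$ profile decomposition of B\'egout--Vargas to a sequence of solutions whose masses approach $M^*$ and whose $\xX_2(\R)$-norms diverge, one extracts a single nontrivial profile that generates a \emph{minimal-mass} blow-up solution $v^*$ of mass exactly $M^*$. The standard Kenig--Merle style argument then shows that $v^*$ is almost periodic modulo the symmetries of \eqref{eq:nls_crit_app}: there exist parameters $N(t) > 0$, $x(t), \xi(t) \in \R$ such that the orbit $\{N(t)^{-1/2} e^{ix\xi(t)} v^*(t, N(t)^{-1}(\cdot - x(t)))\}_t$ is pre-compact in $L^2(\R)$. A further reduction based on the behaviour of $N(t)$ at the endpoints of the maximal existence interval partitions the possible minimal blow-up solutions into a self-similar scenario, a soliton-like scenario with $N(t) \sim 1$, and a frequency-cascade scenario.

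Third, I would rule out each scenario by combining Dodson's long-time Strichartz estimate, tailored to almost-periodic solutions, with a frequency-localised interaction Morawetz inequality built on the Planchon--Vega one-dimensional bilinear Morawetz estimate; the output in each case forces $M^* = 0$, contradicting minimality, and the scattering statement then follows from the finite global $\xX_2(\R)$ bound by a standard Duhamel/Strichartz computation. The main obstacle is precisely this last step: at $L^2$-critical regularity there is no a priori kinetic energy bound, so the Morawetz quantity (controlling $\||\d|^{1/2}(|v|^2)\|_{L^2_{t,x}}$) has to be balanced against long-time Strichartz control of the frequencies below a cutoff matched to $N(t)$. Choosing the cutoff and the time intervals on which to close this balance without loss is the technical core of \cite{dodson2012global, dodson2467global, dodson2016global}, and is where essentially all of the work lies.
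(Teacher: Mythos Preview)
Your outline is a reasonable sketch of Dodson's own proof, but note that the paper does not give any proof of this theorem at all: it is stated purely as background, attributed to Dodson, and invoked as a black box throughout (for instance in Proposition~\ref{prop: stablework} and Corollary~\ref{cor: stablebound}). So there is nothing in the paper to compare your approach against; the authors simply cite \cite{dodson2012global, dodson2467global, dodson2016global} for the result.
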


\subsubsection{Well-posedness of stochastic NLS}

We mainly focus on the stochastic NLS with a multiplicative noise on the whole space. The study of stochastic NLS with a multiplicative noise was initiated in the work of de Bouard and Debussche (\cite{dBD}, \cite{Debussche_H1}), where local and global well-posedness for sub-critical non-linearities was established. See also refinements and further developments in \cite{ Rockner1, Rockner2, hornung2016nonlinear}. In \cite{snls_mass_critical, snls_subcritical_approx}, we extended the result of \cite{dBD}, and proved global well-posedness for 1D mass-critical model with $L_{x}^{2}$ initial data. Later, Zhang (\cite{snls_critical_Zhang}) generalised the results for the mass-critical model to all dimensions and also proved well-posedness for the energy-critical model via a different method (rescaling method) and with different assumptions on the noise as well as different notions of solution. 

Note that if one is interested in $L_{\omega}^{\rho}$ bounds as in the works \cite{dBD, snls_mass_critical, snls_subcritical_approx}, it should be expected mass-critical model is very different from energy-critical models. Indeed, such bounds are not yet available for the latter. 

We conclude with a short discussion on the difference between (local) well-posedness for deterministic and stochastic NLS. Note that such difference will arise even for very simple noise $W(x,t)=V(x)B(t)$ and sub-critical nonlinearity.  

Following the local well-posedness of deterministic NLS, one may want to use Duhamel formula \eqref{eq:u_Duhamel} to construct solutions via a Picard iteration scheme in space of the type $L^{\rho}_{\omega} \yY$ for some (space-time) metric space $Y$. However, such a procedure fails for the following reason. If $u\in L^{\rho}_{\omega} \yY$, then the nonlinear term (which is $|u|^{4}u$ in our case) is expected to live in $L^{\rho/5}_{\omega} \yY'$, no matter which spaces $(Y,Y')$ one chooses. To overcome this difficulty, one needs to explore the so-called pathwise mass conservation law in the model. Thus, even in the local theory of \cite{dBD, snls_mass_critical}, some non-perturbative information is used.

Finally, we remark that there are different notions of local solutions in stochastic NLS. Some are easier to construct, but may have more difficulty to be extended globally. On the other hand, if one uses the notions of solutions as in \cite{dBD, snls_mass_critical}, then those local solutions are very easy to be extended to be global, but indeed long time dynamics do appear even in very short time (though with small probability). It is natural to expect these solutions should agree with each other. One motivation of this article is that, by showing a Wong-Zakai convergence, we demonstrate the solution constructed in our previous work \cite{snls_mass_critical} is natural in the sense it can be approximated by classical solutions of PDEs.

\subsubsection{Wong-Zakai convergence}

The classical Wong-Zakai type theorem refers to a series of pioneering results by Wong and Zakai (\cite{WZ_1, WZ_2}) on one dimensional SDEs and by Stroock and Varadhan (\cite{SV_support}) on multidimensional SDEs, which roughly assert that if $B^{(n)}$ converges to a (finite dimensional) Brownian motion $B$, then the solution $X^{(n)}$ to the multidimensional SDE
\begin{equation}
{\rm d} X^{(n)}(t) = \mu\big(X^{(n)}(t)\big) {\rm d}t + \sigma \big(X^{(n)}(t)\big) {\rm d} B^{(n)}(t) 
\end{equation}
converges to the solution $X$ to the \textit{Stratonovich SDE}
\begin{equation}
{\rm d} X(t) = \mu\big(X(t)\big) {\rm d}t + \sigma\big(X(t)\big) \circ {\rm d}B(t)\;. 
\end{equation}
The convergence statement as well as the rate (in terms of $n$) is directly related to sample path continuity of $X_t$. 

As for analogous questions for parabolic stochastic PDEs, it is natural to consider the model
\begin{equation} \label{eq:spde_parabolic}
{\rm d} u(t) = \Delta u {\rm d}t + f(u) {\rm d}t + g(u) \circ {\rm d} W(t)
\end{equation}
for some Wiener process $W$. The linear operator $e^{t\Delta}$ has a strong smoothing effect (it immediately turns any initial data into a smooth function). Hence, as long as the noise is not too singular  and the nonlinear effect is not too strong, the dissipative system essentially behaves like high dimensional ODEs, and it is reasonable to expect the Wong-Zakai approximations to converge.

On the other hand, if the noise is singular (for example, $W$ being cylindrical Wiener process on $L_x^{2}$), then the problem becomes much subtler as the singularity of the noise is strong enough so that the Stratonovich formulation does not exist. The question for singular parabolic SPDEs has been open for a long time, and was  successfully handled in \cite{Hairer_WZ} with the framework of regularity structures (with proper renormalizations). 

The aim of this article is to prove a Wong-Zakai type theorem for the nonlinear dispersive PDEs \eqref{eq:snls_crit} (in contrast to dissipative PDEs). In some sense, dispersive equations behave less like high dimensional ODEs than dissipative ones since the linear propagator $e^{it\Delta}$ does not have smoothing effects on $L_{x}^2$ initial data. In particular, it does not smooth out high frequencies of the solution flow, especially when one has a critical nonlinearity. Indeed, even the solution to the linear deterministic equation does not have any H\"older continuity as a flow in $L_{x}^2$. Also, we point out here the nonlinearity we are considering are $L_{x}^2$-critical, so it is strong enough that all levels of frequencies should be taken into account. On the other hand, the noise we consider here, though not smooth, is essentially finite dimensional and does not require subtle renormalization treatments as in singular parabolic PDEs.

Since typical Wong-Zakai convergences are intimately related to the time regularity of the solution flow, and that even the linear Schr\"odinger flow is not H\"older continuous in $L_x^2$, it is a priori unclear whether Wong-Zakai convergence is true even if the limiting equation is well defined. Hence, it is our interest to show that this is indeed true, though our convergence statement does not have a rate.

\subsection{Sketch of proof of the main theorem}

In order to show the convergence of $u^{(n)}$ to $u$ , we introduce the intermediate process $\un_{m}$ which solves the equation
\begin{equation} \label{eq:unm}
i \d_t u_{m}^{(n)} + \Delta u_m^{(n)} = \phi_{m} \big( \|u_m^{(n)}\|_{\xX_2(0,t)} \big) |u_m^{(n)}|^{4} u_m^{(n)} + \unm\frac{{\rm d} W^{(n)}}{{\rm d}s}, \quad u_m^{(n)}(0,\cdot) = f. 
\end{equation}
We first make a simple observation. Unlike $u_m$ and $u$ (in \eqref{eq:um_Duhamel} and \eqref{eq:u_Duhamel}) whose formulations rely on stochastic integral, the processes $\un$ and $\unm$ are pathwisely defined for every finite $n$, and one easily verifies $\un=\unm$ on $[0,\tau_{m}^{(n)}]$, where $\tau_{m}^{(n)}$ is such that $\|u^{(n)}\|_{\xX_2([0,\tau_m^{(n)}])} = m$. 

The main ingredient to prove Theorem \ref{th:main} is the following uniform bound on $\un$ and $\unm$. 

\begin{thm} \label{th:unm_uniform_bd_stable}
	Given Assumption~\ref{as:noise_initial} for the noise and initial data, let $\un$ and $\un_{m}$ solve \eqref{eq:un} and \eqref{eq:unm}. Then for every $\rho \geq 1$, we have
	\begin{equation} \label{eq:unm_uniform_bd}
	\|u^{(n)}_{m}\|_{L_{\omega}^{\rho}\xX(0,1)} \lesssim_{\Lambda_{ini}, \Lambda_{noi},\rho} 1\;, \quad \|u^{(n)}\|_{L_{\omega}^{\rho}\xX(0,1)} \lesssim_{\Lambda_{ini}, \Lambda_{noi},\rho} 1 
	\end{equation}
	uniformly	in $n$ and $m$. 
\end{thm}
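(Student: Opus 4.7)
The $\xX_1 = L^\infty_t L^2_x$ part of both estimates is immediate from pathwise mass conservation: since $W^{(n)}$ is real and piecewise linear, a classical energy identity gives $\partial_t \|u^{(n)}(t)\|_{L^2_x}^2 = 0$, and similarly for $u^{(n)}_m$ (the cut-off is real). The real content is therefore the $\xX_2 = L^5_t L^{10}_x$ bound. I would first obtain it for $u^{(n)}_m$ uniformly in $n$ and $m$, and then transfer to $u^{(n)}$ via a stopping-time argument.

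For $u^{(n)}_m$, the strategy is a mixed deterministic--stochastic bootstrap built around Dodson's Theorem~\ref{th:Dodson}. Writing \eqref{eq:unm} in Duhamel form, let $v$ solve the unperturbed defocusing mass-critical NLS with initial data $f$, so that $\|v\|_{\xX_2(\R)} \leq M = M(\Lambda_{\ini})$. The long-time perturbation (``stability'') lemma for mass-critical NLS that underpins Dodson's theorem furnishes $\eta_0 = \eta_0(M)$ and a partition of $[0,1]$ into $K = K(M)$ subintervals $I_\ell$, adapted to the $\xX_2$-profile of $v$, with the following property: if on every $I_\ell$ the error forcing
\begin{equation*}
F^{(n)}_m(t) := - i\int_0^t e^{i(t-s)\Delta}\big(u^{(n)}_m(s) \dot W^{(n)}(s)\big)\,ds - i \int_0^t e^{i(t-s)\Delta} \big(1 - \phi_m(\|u^{(n)}_m\|_{\xX_2(0,s)})\big)|u^{(n)}_m|^{4} u^{(n)}_m \,ds
\end{equation*}
is smaller than $\eta_0$ in a dual Strichartz norm, then $\|u^{(n)}_m - v\|_{\xX_2(0,1)} \leq C(M)$, giving $\|u^{(n)}_m\|_{\xX_2(0,1)} \leq 2M$. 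The second piece of $F^{(n)}_m$ is controlled by the cut-off by definition; the first is the crucial stochastic forcing.

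The key stochastic estimate, proved via Burkholder--Davis--Gundy applied to the discrete martingale whose increments are $W(t^{(n)}_{j+1}) - W(t^{(n)}_j)$ paired with Strichartz duals of $u^{(n)}_m$, shows that on any subinterval $I$ the dual Strichartz norm of $\int_I e^{i(t-s)\Delta}(u^{(n)}_m \dot W^{(n)})\,ds$ is bounded in $L^\rho_\omega$ by $C_\rho \Lambda_{\noi} \sqrt{|I|}\, \|u^{(n)}_m\|_{L^\infty_\omega \xX_1(I)} \leq C_\rho \Lambda_{\noi} \Lambda_{\ini} \sqrt{|I|}$. Assumption~\ref{as:noise_initial} lets us sum over $k$ to dualize against $\sum_k V_k \,dB_k$, and the pathwise $\xX_1$-bound from the mass-conservation step controls the integrand. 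Subdividing each $I_\ell$ further so that $\sqrt{|I|}$ is small enough relative to $\eta_0$ (uniformly in $n$ and $m$, depending only on $M$, $\Lambda_{\noi}$, $\Lambda_{\ini}$, $\rho$) closes the bootstrap.

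Finally, for $u^{(n)}$ I would set $\tau^{(n)}_m = \inf\{t : \|u^{(n)}\|_{\xX_2(0,t)} = 2m\}$; on $[0, \tau^{(n)}_m]$ the cut-off is inactive, hence $u^{(n)} \equiv u^{(n)}_m$. Markov applied to the $u^{(n)}_m$-bound gives $\PP(\tau^{(n)}_m < 1) \lesssim C_\rho^\rho m^{-\rho}$, and on the exceptional event one combines Strichartz with pathwise Gronwall (using that for fixed $\omega$ the equation for $u^{(n)}$ is a deterministic NLS with a bounded multiplicative potential) to bound $\|u^{(n)}\|_{\xX_2(0,1)}$ by a quantity with Gaussian tails in $\|W^{(n)}\|$, which is absorbed by choosing $m$ sufficiently large. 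The main obstacle I anticipate is the interface between the mass-critical stability partition (chosen deterministically through $v$) and the stochastic BDG estimate for the piecewise-constant $\dot W^{(n)}$: one must track constants carefully so that neither the number of partition intervals nor the discrete nature of the noise degrades the bound in $n$, and the cut-off $\phi_m$ must be used precisely to keep the quintic nonlinearity from destabilizing the iteration on low-probability events where $\|u^{(n)}_m\|_{\xX_2}$ spikes.
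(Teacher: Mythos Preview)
Your outline has two genuine gaps, both of which the paper's proof is specifically designed to overcome.

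\textbf{Adaptedness of the stochastic forcing.} You propose to apply BDG to the ``discrete martingale whose increments are $W(t^{(n)}_{j+1}) - W(t^{(n)}_j)$ paired with Strichartz duals of $u^{(n)}_m$''. But for $s \in [t_j, t_{j+1}]$ the process $u^{(n)}_m(s)$ already depends on $\Delta_j W = W(t_{j+1}) - W(t_j)$, since the equation on that subinterval is driven by $\dot W^{(n)} = \Delta_j W / (t_{j+1}-t_j)$. Hence the increment $\big(\int_{t_j}^{t_{j+1}} e^{i(t-s)\Delta} u^{(n)}_m(s)\,ds\big)\cdot \Delta_j W$ is \emph{not} $\fF_{t_j}$-measurable times an independent Gaussian, and the discrete sum is not a martingale. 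The paper's fix is a second Duhamel expansion of $u^{(n)}_m(s)$ at the left partition point $[s]=t_{j(s)}$: this replaces $u^{(n)}_m(s)$ by $e^{i(s-[s])\Delta} u^{(n)}_m([s])$ (which \emph{is} $\fF_{t_{j(s)}}$-measurable, yielding the genuine martingale piece $\sS_{\mar}$) plus a ``quadratic'' piece $\sS_{\qua}$ carrying two factors of $\Delta_{j(s)} W$, plus a nonlinear remainder $\widetilde N$ handled perturbatively. Without this decomposition your BDG step does not get off the ground.

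\textbf{Pathwise versus moment smallness.} The stability/perturbation lemma (your use of Dodson's profile for $v$) is a \emph{deterministic} statement: it needs the forcing to be below $\eta_0$ \emph{pathwise} on each subinterval. A BDG bound of the form $\|F\|_{L^\rho_\omega(\text{dual Strichartz})} \lesssim \sqrt{|I|}$ does not give this --- for any fixed deterministic partition there is positive probability that the forcing on some $I_\ell$ exceeds $\eta_0$, and on that event the stability lemma says nothing. The paper resolves this by first proving $L^\rho_\omega L^5_t$ bounds for the \emph{maximal} functions $\sS^*_{\mar}$, $\sS^*_{\qua}$, then defining a \emph{random} partition of $[0,1]$ into intervals on which these maximal functions stay below $\eta$ pathwise, together with rare ``type-A'' intervals where $\|\Delta_j W\|$ is large. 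The stability lemma is applied pathwise on the former; a crude deterministic bound handles the latter; and the $L^\rho_\omega$ maximal estimates control the \emph{number} of such random intervals. Your deterministic partition adapted to the profile of $v$ cannot play this role.

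A minor additional issue: the term $(1-\phi_m)|u^{(n)}_m|^4 u^{(n)}_m$ in your $F^{(n)}_m$ is not ``controlled by the cut-off by definition'' --- it is supported precisely where $\|u^{(n)}_m\|_{\xX_2(0,s)} > m$, a region on which you have no a priori bound on $u^{(n)}_m$ before the theorem is proved.
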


Most of the estimates in this article involve constants depending $\Lambda_{ini}$ and $\Lambda_{noi}$. Since they are fixed, we do not emphasize these dependences in the rest of the article. 

The uniform bound \eqref{eq:unm_uniform_bd} implies the following stability result, which will allow us to reduce the problem to the one with regular noise and initial data. 

\begin{cor}\label{cor: stab}
Let $\widetilde{u}^{(n)}_{m}$ and $\widetilde{u}_m$ denote the solutions to \eqref{eq:unm} and \eqref{eq:um_Duhamel} with initial data $\widetilde{f} \in L_{\omega}^{\infty}L_{x}^{2}$ and noise
	\begin{equation}
	\widetilde{W}(t,x) = \sum_{k \in \N} B_{k}(t) \widetilde{V}_{k}(x)
	\end{equation}
	satisfying also Assumption~\ref{as:noise_initial} with the same $\Lambda_{noi}, \Lambda_{ini}$. Note that $\widetilde{W}^{(n)}$ is the discretization of $\widetilde{W}$ defined in the same way as $W^{(n)}$ with respect to the same partition $\pi^{(n)}$. Then for every $\eps>0$ and $\rho\geq 1$, there exists $\delta > 0$ depending on $\eps$ and $\rho$ such that if for every $p \in [1,+\infty)$, one has
	\begin{equation}\label{eq: closed}
	\|f - \widetilde{f}\|_{L_{\omega}^{\rho}L_{x}^{2}} + \sum_{k \in \N} \|V_k - \widetilde{V}_k\|_{L_x^{p}} \lesssim_{p} \delta
	\end{equation}
	then
	\begin{equation}
	\|u_{m}^{(n)} - \widetilde{u}_{m}^{(n)}\|_{L_{\omega}^{\rho}\xX(0,1)} < \eps, 
	\end{equation}
	and the same is true for $\|u^{(n)} - \widetilde{u}^{(n)}\|_{L_{\omega}^{\rho}\xX(0,1)}$. All the proportionality constants are uniform in both $n$ and $m$. 
\end{cor}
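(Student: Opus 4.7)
The plan is to derive a stability estimate for the difference $w := u_m^{(n)} - \widetilde{u}_m^{(n)}$ by running the same bootstrap as in the proof of Theorem~\ref{th:unm_uniform_bd_stable} for the difference equation, treating the discrepancies in \eqref{eq: closed} as small forcing terms. Subtracting the two Duhamel formulations of \eqref{eq:unm}, $w$ satisfies an integral equation with three kinds of source terms: the initial-data piece $e^{it\Delta}(f - \widetilde{f})$, whose $L_\omega^\rho \xX(0,1)$-norm is $\lesssim \delta$ by Strichartz; the nonlinear piece
\[ \phi_m\big(\|u_m^{(n)}\|_{\xX_2(0,t)}\big) |u_m^{(n)}|^4 u_m^{(n)} - \phi_m\big(\|\widetilde u_m^{(n)}\|_{\xX_2(0,t)}\big) |\widetilde u_m^{(n)}|^4 \widetilde u_m^{(n)}; \]
and the noise piece $u_m^{(n)} \dot W^{(n)} - \widetilde u_m^{(n)} \dot{\widetilde W}^{(n)}$ together with its It\^o-Stratonovich correction analogue. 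I would further split the noise piece as $w\cdot \dot W^{(n)} + \widetilde u_m^{(n)}\cdot(\dot W^{(n)} - \dot{\widetilde W}^{(n)})$ to isolate the part that is multiplicative in $w$ from the genuinely small inhomogeneous part.

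For the nonlinear term and the multiplicative noise term $w\cdot \dot W^{(n)}$, I would use \eqref{eq:unm_uniform_bd} to partition $[0,1]$ into finitely many random subintervals $\iI_\ell$ on each of which $\|u_m^{(n)}\|_{\xX_2(\iI_\ell)} + \|\widetilde u_m^{(n)}\|_{\xX_2(\iI_\ell)}$ is below a small fixed constant, with $L_\omega^\rho$-control on the random cardinality coming from \eqref{eq:unm_uniform_bd} itself. Since $z \mapsto |z|^4 z$ is locally Lipschitz with constant $\lesssim |z|^4 + |\widetilde z|^4$ and $\phi_m$ is $O(1/m)$-Lipschitz, H\"older and Strichartz control the nonlinear difference on $\iI_\ell$ by a small multiple of $\|w\|_{\xX(\iI_\ell)}$, which is absorbed into the left hand side. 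The piece $w\cdot \dot W^{(n)}$ (plus its It\^o correction) is structurally identical to the noise term already appearing in \eqref{eq:unm}, and is handled by the very same stochastic-bootstrap arguments that underlie Theorem~\ref{th:unm_uniform_bd_stable}. This closes an estimate for $\|w\|_{L_\omega^\rho \xX(\iI_\ell)}$ in terms of its $L_x^2$-data at the left endpoint plus the inhomogeneous source, which one then iterates over the subintervals.

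The main obstacle is the inhomogeneous noise contribution $\widetilde u_m^{(n)}\cdot (\dot W^{(n)} - \dot{\widetilde W}^{(n)})$: pathwise, $\dot W^{(n)}$ has size of order $|\pi^{(n)}|^{-1/2}$, so no deterministic bound can be uniform in $n$, and one must exploit the fact that the \emph{same} Brownian motions $B_k$ drive both $W$ and $\widetilde W$. Writing
\[ \dot W^{(n)} - \dot{\widetilde W}^{(n)} = \sum_k \frac{B_k(t_{j+1}^{(n)}) - B_k(t_j^{(n)})}{t_{j+1}^{(n)} - t_j^{(n)}}(V_k - \widetilde V_k) \quad \text{on } [t_j^{(n)},t_{j+1}^{(n)}], \]
the associated Duhamel integral, upon summing over $j$, is essentially an It\^o stochastic integral with integrand proportional to $\widetilde u_m^{(n)}(V_k - \widetilde V_k)$, up to discretisation errors that are controllable by the same methods used for Theorem~\ref{th:unm_uniform_bd_stable}. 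A BDG estimate combined with Strichartz, the hypothesis \eqref{eq: closed}, and the uniform bound \eqref{eq:unm_uniform_bd} on $\widetilde u_m^{(n)}$ then yields an $L_\omega^\rho\xX(0,1)$ estimate of size $O(\delta)$ for this piece, uniformly in $n,m$. Finally, the stability statement for $u^{(n)} - \widetilde u^{(n)}$ follows by noting that $u^{(n)} = u_m^{(n)}$ on $[0,\tau_m^{(n)}]$ (and similarly for the tilde version), so that \eqref{eq:unm_uniform_bd} implies $\PP(\tau_m^{(n)} < 1) \to 0$ as $m \to \infty$ uniformly in $n$, and sending $m \to \infty$ after the $m$-level stability concludes.
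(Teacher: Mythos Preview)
Your strategy matches the paper's: write the difference equation for $w$, split the noise piece into a $w$-multiplicative part and an inhomogeneous part driven by $U := W - \widetilde W$, treat the latter via the same martingale/BDG machinery that produced $\sS_{\mar}^*,\sS_{\qua}^*$ (picking up a factor $\delta$ from $\|V_k - \widetilde V_k\|$), and pass from $u_m^{(n)}$ to $u^{(n)}$ via the stopping-time argument. The paper carries this out in Section~\ref{sec:cor_stab}, with the explicit term-by-term decomposition \eqref{eq: perturbedduhamel} into $S_1,S_2$ and $e_1,\dots,e_6$.

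One point you gloss over is how to absorb the $w$-multiplicative noise term. Your partition into subintervals $\iI_\ell$ is based on smallness of the $\xX_2$-norms of $u_m^{(n)},\widetilde u_m^{(n)}$; this handles the nonlinear difference but gives no smallness whatsoever for $w\cdot\dot W^{(n)}$. In the proof of Theorem~\ref{th:unm_uniform_bd_stable} the analogous terms $\sS_{\mar},\sS_{\qua}$ are bounded \emph{absolutely} (via the a~priori mass bound on $v$), and the random partition is then chosen to make their maximal versions small. Here the corresponding terms $S_1,S_2$ depend linearly on $w$, for which there is no a~priori control, so that trick is unavailable. The paper closes this by an additional \emph{outer} iteration over deterministic subintervals of length $h$ (Lemma~\ref{lem:iterate}/Lemma~\ref{lem: onemorebootstrap}): on a short interval the $S_1^*,S_2^*$ estimates carry a factor $h^\gamma\|w\|_{L_\omega^\rho\xX}$ (Lemma~\ref{lem: stabconsou}, cf.\ Remarks~\ref{remark: extrasmall1}--\ref{remark: exs2}), which is absorbed by taking $h$ small depending on $m$. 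Your $\xX_2$-based partition then runs \emph{inside} each $h$-interval, with cardinality deterministically $\leq C_m$ thanks to the truncation $\phi_m$ (not merely $L_\omega^\rho$-bounded as you write; a random cardinality would cause exponential blow-up in the iteration). With this two-layer structure your sketch goes through.
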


\begin{remark}
We will only need some finite choices of $p$ in \eqref{eq: closed}. However, since we assume a priori bounds for $\sum_{k}\|V_{k}\|_{L_x^{\infty}}$ and $\sum_{k}\|\widetilde{V}_{k}\|_{L_x^{\infty}}$, the validity of \eqref{eq: closed} for any $p=p_{0}$ already implies the same is true for all $p>p_{0}$ (with a different proportionality constant depending on $p$). 
\end{remark}

To prove Theorem \ref{th:main}, we split $u^{(n)}-u$ into
\begin{equation} \label{eq:split}
\|u^{(n)} - u\| \leq \|u^{(n)} - u^{(n)}_{m}\| + \|u^{(n)}_{m} - u_{m}\| + \|u_m - u\|, 
\end{equation}
where all the norms are $L_{\omega}^{\rho}\xX(0,1)$. By the construction in \cite{snls_mass_critical} (which we stated in Theorem~\ref{th:snls_mass_critical} above), $u_m \rightarrow u$ as $m \rightarrow +\infty$. Note that once uniform boundedness on $\{u_m^{(n)}\}$ (and hence $\{u_m\}$) is established, the convergence of $u_m$ to the limit $u$ does not use the regularity of the noise any more (see \cite[Section~5]{snls_mass_critical}). Hence the convergence also holds in our situation. It then remains to show the convergence of the first two terms, which will be the material of the following two propositions. The convergence $\unm\rightarrow \un$ follows from the our uniform bound \eqref{eq:unm_uniform_bd} and the aforementioned observation $u^{(n)}_{m} = u^{(n)}$ if the $L_{t}^{5}L_{x}^{10}$ norm of the latter is smaller than $m$. 

\begin{prop} \label{pr:unm_to_un}
	For every initial data $f \in L_{\omega}^{\infty}L_x^2$, we have
	\begin{equation}
	\sup_n \|u^{(n)}_m - u^{(n)}\|_{L_{\omega}^{\rho}\xX(0,1)} \rightarrow 0
	\end{equation}
	as $m \rightarrow +\infty$. 
\end{prop}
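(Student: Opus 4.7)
The plan is to exploit the pathwise identity $u^{(n)} = u^{(n)}_m$ on $[0,\tau_m^{(n)}]$ noted just after equation \eqref{eq:unm}, where $\tau_m^{(n)}$ is the first time $t$ at which $\|u^{(n)}\|_{\xX_2(0,t)}$ hits $m$ (with the convention that $\tau_m^{(n)} = 1$ if this never happens on $[0,1]$). On the event $\{\tau_m^{(n)} \geq 1\}$ the difference $u^{(n)}_m - u^{(n)}$ vanishes identically on $[0,1]$, so the only contribution to the $L_\omega^{\rho}$ norm comes from the complementary event, which will have small probability thanks to the uniform moment bounds of Theorem~\ref{th:unm_uniform_bd_stable}.

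Concretely, set $A_{n,m} := \{\tau_m^{(n)} < 1\} = \{\|u^{(n)}\|_{\xX_2(0,1)} > m\}$. The identity above gives
\begin{equation*}
\E \|u^{(n)}_m - u^{(n)}\|_{\xX(0,1)}^{\rho} = \E \bigl[\mathbf{1}_{A_{n,m}} \|u^{(n)}_m - u^{(n)}\|_{\xX(0,1)}^{\rho}\bigr],
\end{equation*}
and Cauchy-Schwarz in $\omega$ bounds this by $\PP(A_{n,m})^{1/2} \bigl(\E \|u^{(n)}_m - u^{(n)}\|_{\xX(0,1)}^{2\rho}\bigr)^{1/2}$. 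The second factor is controlled uniformly in $n$ and $m$ by the triangle inequality together with \eqref{eq:unm_uniform_bd} applied at exponent $2\rho$. For the first factor, Chebyshev's inequality yields $\PP(A_{n,m}) \leq m^{-q} \E \|u^{(n)}\|_{\xX_2(0,1)}^{q}$ for any $q \geq 1$, and the right-hand side is again uniformly bounded in $n$ by Theorem~\ref{th:unm_uniform_bd_stable} applied at exponent $q$.

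Combining these two estimates, one obtains
\begin{equation*}
\|u^{(n)}_m - u^{(n)}\|_{L_\omega^{\rho}\xX(0,1)}^{\rho} \lesssim_{\rho,q} m^{-q/2}
\end{equation*}
for every $q \geq 1$, uniformly in $n$. Letting $q$ be any positive integer sends the right-hand side to $0$ as $m \to \infty$, which yields the proposition (in fact at any polynomial rate in $m$). The core of the argument is therefore Theorem~\ref{th:unm_uniform_bd_stable}: once the uniform-in-$n$ moment bounds for both $u^{(n)}$ and $u^{(n)}_m$ are in hand, the comparison between them is a standard stopping-time plus Chebyshev argument and presents no genuine obstacle. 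The only subtle point to keep in mind is that one must have the bound \eqref{eq:unm_uniform_bd} at arbitrary exponent $2\rho$ (rather than just $\rho$), in order to justify the Cauchy-Schwarz split; this is precisely what Theorem~\ref{th:unm_uniform_bd_stable} provides.
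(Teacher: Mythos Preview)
Your proposal is correct and follows essentially the same approach as the paper's own proof: both arguments restrict to the event where the $\xX_2$-norm of $u^{(n)}$ exceeds $m$, use H\"older/Cauchy--Schwarz in $\omega$ to separate the indicator of this event from the $\xX$-norm of the difference, bound the latter uniformly via Theorem~\ref{th:unm_uniform_bd_stable} at exponent $2\rho$, and control the probability of the event by Chebyshev together with the same uniform moment bounds. The only cosmetic difference is that the paper defines its exceptional set using the full $\xX$-norm rather than $\xX_2$, which gives a slightly larger set but makes no difference to the argument.
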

\begin{proof}
	Let
	\begin{equation}
	\Omega^{(n)}_{K} = \big\{ \omega \in \Omega: \|u^{(n)}\|_{\xX(0,1)} \geq K \big\}. 
	\end{equation}
	Recall the definitions of $u^{(n)}$ and $u^{(n)}_{m}$ from \eqref{eq:un} and \eqref{eq:unm}. Note that $u^{(n)}_{m} = u^{(n)}$ on $(\Omega^{(n)}_{m})^{c}$. Hence, we have
	\begin{equation}
	\begin{split}
	\|u^{(n)}_{m} - u^{(n)}\|_{L_{\omega}^{\rho}\xX(0,1)} &= \| \1_{\Omega^{(n)}_{m}} (u^{(n)}_{m} - u^{(n)}) \|_{L_{\omega}^{\rho}\xX(0,1)}\\
	&\leq \big( \Pr(\Omega^{(n)}_{m}) \big)^{\frac{1}{2\rho}} \Big( \|u^{(n)}_{m}\|_{L_{\omega}^{2\rho}\xX(0,1)} + \|u^{(n)}\|_{L_{\omega}^{2\rho}\xX(0,1)} \Big). 
	\end{split}
	\end{equation}
	By Theorem~\ref{th:unm_uniform_bd_stable}, we have
	\begin{equation}
	\Pr\big( \Omega^{(n)}_{m} \big) \leq m^{-\rho} \|u^{(n)}\|_{L_{\omega}^{\rho}\xX(0,1)} \lesssim_{\rho} m^{-\rho}
	\end{equation}
	for all $\rho \geq 1$. Hence the claim follows. 
\end{proof}

The convergence of the second term in \eqref{eq:split} is the following proposition.

\begin{prop} \label{pr:unm_to_um}
	For every initial data $f \in L_{\omega}^{\infty}L_{x}^{2}$ and every $m>0$, we have
	\begin{equation}
	\|u^{(n)}_m - u_m\|_{L_{\omega}^{\rho}\xX(0,1)} \rightarrow 0
	\end{equation}
	as $n \rightarrow +\infty$. 
\end{prop}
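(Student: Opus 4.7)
The plan is to reduce via Corollary~\ref{cor: stab} to a smoothed setting, then decompose the discrepancy $\widetilde u_m^{(n)} - \widetilde u_m$ into a ``fixed-coefficient'' Wong-Zakai term that tends to zero and a linear remainder that is closed by re-running the bootstrap from Theorem~\ref{th:unm_uniform_bd_stable}.

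\textbf{Step 1: Reduction by stability.} Fix $\eps > 0$ and $\rho \geq 1$. Using Corollary~\ref{cor: stab}, choose an approximating noise $\widetilde W = \sum_k B_k \widetilde V_k$ with only finitely many $\widetilde V_k$ nonzero, each smooth and compactly supported, and a smooth deterministic initial datum $\widetilde f$ close to $f$, such that
\[
  \|\unm - \widetilde u_m^{(n)}\|_{L_\omega^\rho \xX(0,1)} + \|u_m - \widetilde u_m\|_{L_\omega^\rho \xX(0,1)} < \eps
\]
uniformly in $n$. It therefore suffices to prove $\|\widetilde u_m^{(n)} - \widetilde u_m\|_{L_\omega^\rho \xX(0,1)} \to 0$ in this smoothed setting, where $\widetilde u_m$ additionally enjoys spatial regularity propagated from smooth data through the cut-off nonlinearity and smooth multiplicative noise.

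\textbf{Step 2: Splitting off the pure Wong-Zakai term.} Subtracting the two Duhamel formulas and using the Lipschitz property of $\phi_m$ together with Theorem~\ref{th:unm_uniform_bd_stable} to control the nonlinear difference by $\|w^{(n)}\|_{\xX}$ (up to a uniformly bounded prefactor), the difference $w^{(n)} := \widetilde u_m^{(n)} - \widetilde u_m$ satisfies schematically
\[
  w^{(n)}(t) = \mathcal{N}^{(n)}\!\big(w^{(n)}\big)(t) - i\int_0^t e^{i(t-s)\Delta}\big(w^{(n)}\,\dot{\widetilde W}^{(n)}\big)\, ds + E^{(n)}(t),
\]
where $E^{(n)}$ is the ``fixed-coefficient'' discrepancy
\[
  E^{(n)}(t) := -i\!\int_0^t\! e^{i(t-s)\Delta}\big(\widetilde u_m\,\dot{\widetilde W}^{(n)}\big)\, ds + i\!\int_0^t\! e^{i(t-s)\Delta}\big(\widetilde u_m\,d\widetilde W\big) + \tfrac{1}{2}\!\int_0^t\! e^{i(t-s)\Delta}\big(\widetilde{\mathbf V}^2 \widetilde u_m\big)\, ds.
\]
The first task is to prove $\|E^{(n)}\|_{L_\omega^\rho \xX(0,1)} \to 0$. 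Since only finitely many smooth $\widetilde V_k$ appear and $\widetilde u_m$ has good regularity, this reduces to a finite sum of scalar Wong-Zakai comparisons: applying It\^o's formula to $s \mapsto e^{i(t-s)\Delta}(\widetilde u_m(s) \widetilde V_k)$ on each subinterval $[t_j^{(n)}, t_{j+1}^{(n)}]$ extracts precisely the It\^o-Stratonovich correction $-\tfrac{i}{2}\widetilde V_k^2 \widetilde u_m$, and BDG converts the resulting telescoping martingale-plus-higher-order remainder into $L_\omega^\rho \xX$-bounds that vanish as $\|\pi^{(n)}\| \to 0$.

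\textbf{Step 3: Closing the difference equation, and the main obstacle.} The term $-i\int_0^t e^{i(t-s)\Delta}(w^{(n)}\dot{\widetilde W}^{(n)})\, ds$ cannot be estimated pointwise in $n$ because $\dot{\widetilde W}^{(n)}$ diverges. Rewrite it as an It\^o integral plus the $\tfrac{1}{2}\widetilde{\mathbf V}^2 w^{(n)}$ correction, plus a residual $R^{(n)}$ that is the Wong-Zakai discrepancy with $\widetilde u_m$ replaced by $w^{(n)}$. The It\^o and correction pieces yield a Gronwall-admissible bound via Strichartz$+$BDG (with a small prefactor when iterated on short subintervals), while $R^{(n)}$ is estimated by the same bootstrap used for Theorem~\ref{th:unm_uniform_bd_stable}, now applied to the difference equation, giving a bound of the form $o(1)\|w^{(n)}\|_{L_\omega^\rho \xX} + o(1)$. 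Combined with $\|E^{(n)}\| \to 0$, this closes the estimate. The principal difficulty is precisely this step: the unknown $w^{(n)}$ is multiplied by the divergent quantity $\dot{\widetilde W}^{(n)}$, so naive Gronwall fails and one must re-deploy the Strichartz--bootstrap machinery developed for Theorem~\ref{th:unm_uniform_bd_stable} on the difference equation, while verifying that the Wong-Zakai-type correction for $w^{(n)}$ itself shrinks with $\|\pi^{(n)}\|$; a secondary subtlety is that the scalar Wong-Zakai statement in Step~2 must be upgraded to an $L_\omega^\rho \xX$-estimate in the presence of the non-smoothing propagator $e^{it\Delta}$.
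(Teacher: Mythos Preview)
Your overall strategy matches the paper's: reduce via Corollary~\ref{cor: stab} to smooth data and finite-dimensional smooth noise, then decompose the difference equation for $w^{(n)}$ into a ``fixed-coefficient'' Wong--Zakai piece and a linear-in-$w^{(n)}$ piece closed by the bootstrap of Section~\ref{sec:unm_bd}/\ref{sec:cor_stab}. Two technical points, however, are handled differently in the paper and deserve attention.

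\emph{Step 3 and adaptedness.} Your proposal to ``rewrite $-i\int_0^t e^{i(t-s)\Delta}(w^{(n)}\,\dot{\widetilde W}^{(n)})\,ds$ as an It\^o integral plus correction plus residual'' runs into the fact that $w^{(n)}(s)$ is \emph{not} $\mathcal{F}_s$-adapted: on $[t_j,t_{j+1})$ it already depends on $W(t_{j+1})$ through $\dot{\widetilde W}^{(n)}$. Hence $\int w^{(n)}\,dW$ is not an It\^o integral in the usual sense. The paper avoids this entirely by re-running the \emph{second Duhamel expansion} (as in \eqref{eq: secondexpansion}--\eqref{eq: fullexpansion}) on the difference equation: the problematic term splits into a genuine martingale piece $M_1$ involving the adapted quantity $w([s])$, a quadratic piece $M_2$, and a nonlinear error $g_4$. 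These are then controlled by $h^\theta\|w\|_{L_\omega^\rho\xX(0,T)}$ on short intervals (Lemma~\ref{lem:esfornewe}), exactly as you anticipate, but without ever forming $\int w^{(n)}\,dW$.

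\emph{Step 2 and the correction term.} Rather than applying It\^o's formula to $s\mapsto e^{i(t-s)\Delta}(\widetilde u_m(s)\widetilde V_k)$ (which is awkward since $e^{it\Delta}$ does not smooth), the paper first establishes persistence of $H_x^1$ regularity (Proposition~\ref{pr:unm_H1}) and hence time-H\"older continuity of $\widetilde u_m$ in $L_x^2$ (Proposition~\ref{pr:unm_H1_time_cont}). This H\"older estimate directly controls the discrepancies $\widetilde u_m(s)-\widetilde u_m([s])$ appearing in $M_3$ and in the ``quadratic variation'' term $g_1$. The remaining core of $g_1$ is handled by observing that $\delta_{ii'}-\frac{\Delta_j B_i\,\Delta_j B_{i'}}{t_{j+1}-t_j}$ has mean zero, giving a discrete martingale to which Burkholder applies. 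This route bypasses It\^o's formula and is what makes the $\xX$-norm (not just $L_x^2$) estimate go through.
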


The proof of the above proposition is another main ingredient of the article. It is in this step we see the Wong-Zakai type convergence. It will be proved in Section~\ref{sec:pr_WZ}. 

The basic idea is that uniform bound \eqref{eq:unm_uniform_bd} allows us to reduce the study of $\un$ to $\unm$, which essentially linearizes the dynamics. This is in particular important since we are working on stochastic problems, where the nonlinearity causes extra difficulties due to loss of integrability in probability space. Wong-Zakai convergence is nontrivial even for linear stochastic Schr\"odinger equations, since the propagator $e^{it\Delta}$ does not have time regularity in $L_x^2$. This is the reason why we need the stability statement (Corollary \ref{cor: stab}) to regularize the initial data and noise. The proof of Proposition~\ref{pr:unm_to_um} is similar to that of Corollary~\ref{cor: stab}.

\subsection{Organization of the article}

According to the above sketch, the proof of the main result (Theorem~\ref{th:main}) will be complete if we prove Theorem~\ref{th:unm_uniform_bd_stable}, Corollary \ref{cor: stab} and Proposition~\ref{pr:unm_to_um}. The rest of the article is thus organized as follows. 

In Section~\ref{sec:prelim}, we present some preliminary bounds and lemmas. We will prove Theorem~\ref{th:unm_uniform_bd_stable} in Section~\ref{sec:unm_bd}, Corollary~\ref{cor: stab} in  Section~\ref{sec:cor_stab}, and finally Proposition~\ref{pr:unm_to_um} Section \ref{sec:pr_WZ}.

\subsection*{Notations}

We introduce some notations that will be frequently used in the rest of the article, including those that have been mentioned above. For any time interval $\iI$, we let
\begin{equation}
\xX_1(\iI) = L_{t}^{\infty}\big(\iI, L_{x}^{2}(\RR)\big)\;, \quad \xX_{2}(\iI) = L_{t}^{5} \big( \iI, L_{x}^{10} \big). 
\end{equation}
We let $\xX = \xX_{1} \cap \xX_2$ in the sense that $\|\cdot\|_{\xX(\iI)} = \|\cdot\|_{\xX_1(\iI)} + \|\cdot\|_{\xX_2(\iI)}$. For $\rho \geq 1$, we write $L_{\omega}^{\rho}\yY = L^{\rho}\big(\Omega, \yY\big)$. We also write $\nN(v) = |v|^{4} v$ for the nonlinearity. 

In what follows, we will fix the (large) discretization parameter $n$. We either obtain bounds that do not depend on $n$, or we compare $u^{(n)}_{m}$ and $u^{(n)}$ with their candidate limits $u_m$ and $u$. In particular, at most one $n$ is involved in this article, and we will never compare with two different $n_1$ and $n_2$. Hence, for points in the partition $\pi^{(n)}$, we drop the superscript $n$ and write $t_{j} = t_{j}^{(n)}$ for simplicity. For $s \in [0,1]$, we let $j(s)$ be the integer in $\{0, \dots, n\}$ such that $s \in [t_{j(s)}, t_{j(s)+1})$. We also let $[s] = t_{j(s)}$ for $s \in [0,1]$. Finally, for $0 \leq j \leq n-1$, we write $\Delta_j W = W(t_{j+1}) - W(t_j)$. 

We will write $A \lesssim B$ if there exists $C$ independent of the quantities $A$ and $B$ such that $A \leq CB$. When such a $C$ depends on some parameter (say $\alpha$), we will write $A \lesssim_{\alpha} B$. In what follows, we do not keep track of explicit dependence on $\rho$, $\Lambda_{ini}$ and $\Lambda_{noi}$, and we will simply write $\lesssim$ instead of $\lesssim_\rho$ for example. Finally, we use $c$ and $C$ to denote constants whose values may change from line to line.

\subsection*{Acknowledgements}

WX acknowledges the support from the Engineering and Physical Sciences Research Council through the fellowship EP/N021568/1. CF thanks Carlos Kenig and Gigliola Staffilani for discussion and encouragement. CF also thanks Yanqi Qiu for discussion on geometry of Banach spaces. Part of this work was done when the authors were at the Universities of Chicago and Oxford respectively, which provide ideal environment for mathematical research.

\section{Preliminaries}\label{sec:prelim}

\subsection{Dispersive and Strichartz estimates}

We start with the by-now standard dispersive and Strichartz estimates. Let $e^{it\Delta}$ be the free propagator of linear Schr\"odinger equation. Then we have the following dispersive and Strichartz estimates. All the bounds below are for dimension one. 

\begin{lem}
	We have
	\begin{equation} \label{eq: dispersive}
	\|e^{it\Delta} f\|_{L^{p}(\RR)} \lesssim  t^{\frac{1}{2}-\frac{1}{p}} \|f\|_{L^{p'}(\RR)}
	\end{equation}
	for every $p \geq 2$ and $p'$ is the conjugate of $p$. 
\end{lem}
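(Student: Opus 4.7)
The result is the classical one-dimensional Schr\"odinger dispersive estimate, and the exponent as printed, $t^{\frac{1}{2}-\frac{1}{p}}$, is evidently a sign slip: the natural (and useful) form of the bound has exponent $-(\tfrac{1}{2}-\tfrac{1}{p}) = \tfrac{1}{p} - \tfrac{1}{2}$, which is non-positive for $p\geq 2$ and encodes the $t^{-1/2}$ decay of the free Schr\"odinger evolution at the $L^\infty$ endpoint. All Strichartz machinery that the paper will invoke relies on this corrected form, so my plan is to prove it in that form; the strategy is the classical one, namely prove two endpoints and interpolate.

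At the $L^2$ endpoint I would use Plancherel together with the explicit Fourier multiplier $\widehat{e^{it\Delta}f}(\xi) = e^{-it|\xi|^2}\widehat{f}(\xi)$ to conclude
\begin{equation*}
\|e^{it\Delta}f\|_{L^2(\RR)} = \|e^{-it|\xi|^2}\widehat{f}\|_{L^2(\RR)} = \|f\|_{L^2(\RR)},
\end{equation*}
which is the $\theta=0$ endpoint with no time factor. At the $L^\infty$ endpoint I would complete the square in the Fourier integral (or equivalently use the fundamental solution of the linear Schr\"odinger equation) to obtain the explicit one-dimensional kernel
\begin{equation*}
(e^{it\Delta}f)(x) = \frac{1}{\sqrt{4\pi i t}} \int_{\RR} e^{i|x-y|^2/(4t)} f(y)\, dy,
\end{equation*}
and then bound the modulus of the kernel by $(4\pi t)^{-1/2}$ to get $\|e^{it\Delta}f\|_{L^\infty} \leq (4\pi t)^{-1/2}\|f\|_{L^1}$, the $\theta=1$ endpoint.

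Finally I would interpolate between these two endpoints by Riesz--Thorin applied to the (for each fixed $t$) linear operator $T_t := e^{it\Delta}$. With the standard parameter choice $\theta = 1 - \tfrac{2}{p}$ so that $\tfrac{1}{p} = \tfrac{1-\theta}{2}$ and $\tfrac{1}{p'} = \tfrac{1-\theta}{2} + \theta$, one reads off
\begin{equation*}
\|T_t\|_{L^{p'}\to L^p} \leq \|T_t\|_{L^2\to L^2}^{1-\theta}\,\|T_t\|_{L^1\to L^\infty}^{\theta} \lesssim t^{-\theta/2} = t^{-(\frac{1}{2}-\frac{1}{p})},
\end{equation*}
which is the advertised bound (in its corrected form). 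None of these steps is a real obstacle: the only mildly delicate point is to make rigorous sense of the oscillatory integral representation of $e^{it\Delta}f$ for general $f\in L^{p'}$, which is standard via density of Schwartz functions and the continuity of $e^{it\Delta}$ on each $L^{p'}$ produced a posteriori by the very inequality being proved.
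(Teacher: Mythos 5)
Your proof is correct and is exactly the standard argument (Plancherel for the $L^2\to L^2$ endpoint, the explicit fundamental solution for $L^1\to L^\infty$, then Riesz--Thorin); the paper gives no proof of this lemma, deferring to the textbooks \cite{cazenave2003semilinear,keel1998endpoint,tao2006nonlinear}, so there is nothing to compare approaches against. You are also right that the printed exponent $t^{\frac{1}{2}-\frac{1}{p}}$ is a sign slip: the paper itself later invokes the estimate with $p=10$ to produce a factor $(t-s)^{-2/5}$ (see the chain of inequalities in the proof of Lemma~\ref{lem: smallosc} and in~\eqref{eq: pointe}), which only follows from the corrected form $|t|^{-(\frac{1}{2}-\frac{1}{p})}$.
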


We call $(q,r)$ an admissible pair (in dimension one) if $\frac{2}{q} + \frac{1}{r} = \frac{1}{2}$. We have the following Strichartz estimates. 

\begin{lem}
	We have
	\begin{equation} \label{eq: stri}
	\|e^{it\Delta}f\|_{L_{t}^{q}L_{x}^{r}(\RR)} \leq C \|f\|_{L_{x}^{2}}, \quad \Big\| \int_{0}^{t} e^{i(t-s)\Delta} \sigma(s) {\rm d} s \Big\|_{L_{t}^{q}L_{x}^{r}(\iI)} \leq C \|\sigma\|_{L_{t}^{\tilde{q}'}L_{x}^{\tilde{r}'}(\iI)} 
	\end{equation}
	for any admissible pairs $(q,r)$ and $(\tilde{q}, \tilde{r})$, where $\tilde{q}'$ and $\tilde{r}'$ are conjugates of $\tilde{q}$ and $\tilde{r}$. 
\end{lem}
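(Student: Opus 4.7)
The plan is to derive both bounds by the classical $TT^*$ argument combined with the Hardy-Littlewood-Sobolev inequality in time, using \eqref{eq: dispersive} as the only input. Introduce $T \colon L^2_x \to L^q_t L^r_x$ by $Tf(t) = e^{it\Delta}f$. A direct computation gives
\begin{equation*}
(TT^*F)(t) = \int_{\RR} e^{i(t-s)\Delta} F(s)\, ds,
\end{equation*}
and by \eqref{eq: dispersive},
\begin{equation*}
\|e^{i(t-s)\Delta} F(s)\|_{L^r_x} \lesssim |t-s|^{-(1/2 - 1/r)} \|F(s)\|_{L^{r'}_x}.
\end{equation*}

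Next, the admissibility condition $2/q + 1/r = 1/2$ rearranges to $1/2 - 1/r = 2/q$, so the resulting temporal kernel decays like $|t-s|^{-2/q}$. Since $2/q < 1$ for every admissible pair in dimension one, convolution with this kernel maps $L^{q'}_t \to L^q_t$ by the Hardy-Littlewood-Sobolev inequality. Combining this with Minkowski's integral inequality in $x$ yields
\begin{equation*}
\|TT^*F\|_{L^q_t L^r_x} \lesssim \|F\|_{L^{q'}_t L^{r'}_x},
\end{equation*}
and by the standard $TT^*$ duality this is equivalent to the homogeneous Strichartz bound $\|Tf\|_{L^q_t L^r_x} \lesssim \|f\|_{L^2_x}$.

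For the inhomogeneous estimate with possibly distinct admissible pairs $(q,r)$ and $(\tilde q, \tilde r)$, first compose the homogeneous bound with its dual to obtain the untruncated bilinear estimate
\begin{equation*}
\Big\| \int_{\RR} e^{i(t-s)\Delta} \sigma(s)\, ds \Big\|_{L^q_t L^r_x} \lesssim \|\sigma\|_{L^{\tilde q'}_t L^{\tilde r'}_x}.
\end{equation*}
To pass to the retarded integral $\int_0^t e^{i(t-s)\Delta}\sigma(s)\, ds$, invoke the Christ-Kiselev lemma, which transfers boundedness from the full integral to the truncated one whenever $\tilde q' < q$; this covers all non-endpoint admissible pairs. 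The main (and essentially only) subtlety here is the degenerate 1D endpoint $(q,r) = (4,\infty)$, which is implicitly excluded from the statement. Since the pairs actually used later in the paper, such as $(\infty, 2)$ and $(5, 10)$, lie strictly inside the non-endpoint range, the classical argument applies without modification and no further work is needed for the forthcoming estimates.
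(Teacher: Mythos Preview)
Your argument is the standard $TT^*$ derivation and is correct for non-endpoint admissible pairs; you have also correctly flagged the degenerate 1D endpoint $(4,\infty)$ and noted it is irrelevant for the pairs used later. The paper itself does not give a proof of this lemma at all: it simply states the estimates and refers to \cite{cazenave2003semilinear}, \cite{keel1998endpoint} and \cite{tao2006nonlinear} for details. What you have written is precisely the classical proof one finds in those references, so your proposal is consistent with (indeed more detailed than) the paper's treatment.
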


We refer to \cite{cazenave2003semilinear}, \cite{keel1998endpoint} and \cite{tao2006nonlinear} and reference therein for more details.

\subsection{Standard and modified stability for critical NLS}

In this section, we present several stability results for deterministic NLS. We focus on $d=1$ for simplicity, but this part has natural generalizations to higher dimensions. We remark that the estimates below do not rely on the choice of time interval $[0,T]$. We start with the standard and most frequently used stability results for NLS. We refer to Lemmas 3.9 and 3.10 in \cite{colliander2008global} for example. 

\begin{prop}\label{prop: stable}
Suppose $\widetilde{w}$ solves
\begin{equation}\label{eq: pp1}
i \d_t \widetilde{w} +\Delta \widetilde{w}=|\widetilde{w}|^{4} \widetilde{w}+e\;, \quad (t,x) \in [0,T] \times \RR\;, 
\end{equation}
where
\begin{enumerate}
\item $\|\widetilde{w}\|_{L_{t}^{\infty}L_{x}^{2}(0,T)} \leq M$\;,
\item $\|\widetilde{w}\|_{L_{t}^{5}L_{x}^{10}(0,T)} \leq E$\;. 
\end{enumerate}
Then there exists $\eps_{0}>0$ depending on $M$ and $E$ only such that if $w$ solves \eqref{eq:nls_crit} with
\begin{equation}
\|w(0)-\widetilde{w}(0)\|_{L_{x}^{2}}\leq \eps \leq \eps_{0}\;, \quad \|e\|_{L_{t}^{1}L_{x}^{2}(0,T)} \leq \eps \leq \eps_{0}\;, 
\end{equation}
then
\begin{equation}
\|w-\widetilde{w}\|_{\xX(0,T)} \lesssim_{M,E} \eps. 
\end{equation}
This in particular implies
\begin{equation}
\|w\|_{\xX(0,T)}\lesssim_{M,E} 1.
\end{equation}
\end{prop}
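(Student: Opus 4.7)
The plan is to follow the now-classical subdivision-and-bootstrap argument for stability of the mass-critical NLS, using the fact that the given $L^5_t L^{10}_x$ bound on $\widetilde{w}$ lets us partition $[0,T]$ into a controlled number of intervals on which the nonlinear error is perturbative.

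\textbf{Step 1: Equation for the difference.} Let $v = w - \widetilde{w}$. Subtracting \eqref{eq: pp1} from the equation for $w$ gives
\begin{equation*}
i\partial_t v + \Delta v \;=\; \nN(\widetilde{w} + v) - \nN(\widetilde{w}) - e,
\end{equation*}
with $\|v(0)\|_{L_x^2} \leq \eps$. Pointwise one has the standard estimate
\begin{equation*}
\big| \nN(\widetilde{w}+v) - \nN(\widetilde{w}) \big| \;\lesssim\; |\widetilde{w}|^{4} |v| + |v|^{5}.
\end{equation*}

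\textbf{Step 2: Partition and single-interval estimate.} Fix a small $\eta>0$ to be chosen. Partition $[0,T]$ into consecutive intervals $I_0, \dots, I_{J-1}$, chosen so that
\begin{equation*}
\|\widetilde{w}\|_{L_t^5 L_x^{10}(I_j)} \leq \eta \qquad \text{for each } j,
\end{equation*}
which is possible with $J \lesssim (E/\eta)^5$ depending only on $E$ and $\eta$. On a generic $I_j = [s_j, s_{j+1}]$, applying Strichartz \eqref{eq: stri} (using the dual admissible pair $(\tilde q', \tilde r')=(1,2)$ on the right) yields
\begin{equation*}
\|v\|_{\xX(I_j)} \;\lesssim\; \|v(s_j)\|_{L_x^2} + \big\| |\widetilde{w}|^{4} v \big\|_{L_t^1 L_x^2(I_j)} + \big\| |v|^{5} \big\|_{L_t^1 L_x^2(I_j)} + \|e\|_{L_t^1 L_x^2(I_j)}.
\end{equation*}
Hölder's inequality in $(t,x)$ gives $\| |\widetilde{w}|^4 v\|_{L_t^1 L_x^2} \lesssim \|\widetilde{w}\|_{L_t^5 L_x^{10}}^4 \|v\|_{L_t^5 L_x^{10}} \leq \eta^4 \|v\|_{\xX_2(I_j)}$, and $\| |v|^5\|_{L_t^1 L_x^2} \lesssim \|v\|_{\xX_2(I_j)}^5$. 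Consequently
\begin{equation*}
\|v\|_{\xX(I_j)} \;\leq\; C \|v(s_j)\|_{L_x^2} + C \eta^{4} \|v\|_{\xX(I_j)} + C \|v\|_{\xX(I_j)}^{5} + C\eps.
\end{equation*}

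\textbf{Step 3: Bootstrap on one interval.} Choose $\eta$ small so that $C\eta^4 \leq \tfrac{1}{2}$. A standard continuity argument in $|I_j|$ then shows that, provided $\|v(s_j)\|_{L_x^2} + \eps$ is sufficiently small (smaller than a constant depending only on $M,E$), one has
\begin{equation*}
\|v\|_{\xX(I_j)} \;\leq\; C' \big( \|v(s_j)\|_{L_x^2} + \eps \big),
\end{equation*}
where the fifth-power term is absorbed because it is dominated by the linear ones as long as $\|v\|_{\xX(I_j)}$ stays below a threshold depending only on $M,E$. The uniform $L_x^2$ control of $w$ from $w(s_j)=\widetilde{w}(s_j)+v(s_j)$ together with Strichartz keeps the continuity argument consistent.

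\textbf{Step 4: Iteration across intervals.} Since $\|v(s_{j+1})\|_{L_x^2} \leq \|v\|_{\xX_1(I_j)} \leq \|v\|_{\xX(I_j)}$, applying Step~3 iteratively gives $\|v(s_{j+1})\|_{L_x^2} \leq C'(\|v(s_j)\|_{L_x^2}+\eps)$, hence by induction
\begin{equation*}
\|v(s_j)\|_{L_x^2} + \|v\|_{\xX(I_j)} \;\lesssim\; (C')^{j}\,\eps.
\end{equation*}
Since $J$ depends only on $M,E$, summing over $j$ yields $\|v\|_{\xX(0,T)} \lesssim_{M,E} \eps$, and taking $\eps_0$ small enough relative to the threshold in Step~3 ensures the continuity argument closes at every stage. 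The bound on $\|w\|_{\xX(0,T)}$ follows from $\|w\|_{\xX} \leq \|\widetilde{w}\|_{\xX} + \|v\|_{\xX} \lesssim_{M,E} 1$.

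\textbf{Main obstacle.} The only subtle point is the bootstrap/continuity step: one must ensure that the fifth-power quintic term $C\|v\|_{\xX(I_j)}^5$ can be absorbed into the left-hand side throughout the evolution, which requires choosing $\eps_0$ small depending on $M,E,J$ and $\eta$. Once this scale separation is set up correctly, the rest is a routine Strichartz/Hölder computation. This is exactly the content of the cited Lemmas 3.9–3.10 in \cite{colliander2008global}.
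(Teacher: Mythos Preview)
Your proof is correct and follows exactly the classical subdivision-and-bootstrap argument that the paper cites (Lemmas~3.9--3.10 in \cite{colliander2008global}); the paper itself does not give an independent proof of this proposition but simply refers to that reference. One minor remark: the hypothesis $\|\widetilde{w}\|_{L_t^\infty L_x^2}\leq M$ is not actually used in your Steps~1--4 for the difference estimate (your $J$ depends only on $E$ and $\eta$), and enters only at the very end to conclude $\|w\|_{\xX}\lesssim_{M,E}1$ via $\|\widetilde{w}\|_{\xX_1}\leq M$; your aside in Step~3 about ``uniform $L_x^2$ control of $w$'' is therefore unnecessary for closing the bootstrap on $v$.
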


While Proposition \ref{prop: stable} is purely perturbative, one can combine it with Dodson's global well-posedness result (Theorem \ref{th:Dodson}) to improve to the following statement. 

\begin{prop} \label{prop: stablework}
Suppose $\tw$ solves
\begin{equation}
i \d_t \widetilde{w} + \Delta \widetilde{w} = |\widetilde{w}|^{4} \widetilde{w}+e
\end{equation}
on $[0,T]$ with $\|\tw(0)\|_{L_{x}^{2}}\leq M$. Then there exists $\eps>0$ depending on $M$ only such that if 
\begin{equation}
\|e\|_{L_{t}^{1}L_{x}^{2}(0,T)} \leq \eps,
\end{equation} 
then
\begin{equation}
\|\widetilde{w}\|_{\xX(0,T)}\lesssim_{M} 1.
\end{equation}
\end{prop}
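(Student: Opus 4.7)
The plan is to combine Dodson's global well-posedness (Theorem~\ref{th:Dodson}) with a bootstrap perturbation argument on subintervals chosen using a clean reference solution. Let $w$ solve \eqref{eq:nls_crit_app} with $w(0)=\widetilde{w}(0)$. By Theorem~\ref{th:Dodson}, $w$ is global with $\|w\|_{\xX_2(\RR)} \leq E = E(M)$, and mass conservation gives $\|w\|_{\xX(0,T)} \lesssim_M 1$. I then partition $[0,T] = \bigcup_{j=0}^{N-1} I_j$ into $N=N(M)$ subintervals $I_j = [t_j, t_{j+1}]$ chosen so that $\|w\|_{L_t^5 L_x^{10}(I_j)} \leq \eta_0$ for a small universal constant $\eta_0$ to be fixed below.

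Next, set $r := \widetilde{w} - w$, which satisfies $i\d_t r + \Delta r = (|\widetilde{w}|^4 \widetilde{w} - |w|^4 w) + e$ with $r(0)=0$. On each $I_j$, Strichartz estimates \eqref{eq: stri} applied to the Duhamel formula starting at $t_j$, together with the pointwise bound $\big| |\widetilde{w}|^4 \widetilde{w} - |w|^4 w \big| \lesssim (|w|^4 + |\widetilde{w}|^4)|r|$ and H\"older in space-time (using $\tfrac{4}{5}+\tfrac{1}{5}=1$ in time and $\tfrac{2}{5}+\tfrac{1}{10}=\tfrac{1}{2}$ in space to place the nonlinearity in $L_t^1 L_x^2$), yield
\begin{equation}\label{eq:plan-strich}
\|r\|_{\xX(I_j)} \leq C\Big( \|r(t_j)\|_{L_x^2} + \big( \|w\|_{L_t^5 L_x^{10}(I_j)}^4 + \|\widetilde{w}\|_{L_t^5 L_x^{10}(I_j)}^4 \big)\|r\|_{\xX(I_j)} + \|e\|_{L_t^1 L_x^2(I_j)} \Big).
\end{equation}
Assuming as a bootstrap that $\|r\|_{\xX(I_j)} \leq \eta_0$, one has $\|\widetilde{w}\|_{L_t^5 L_x^{10}(I_j)} \leq 2\eta_0$; choosing $\eta_0$ so small that $C(2\eta_0)^4 \leq \tfrac{1}{2}$ absorbs the nonlinear term in \eqref{eq:plan-strich} and produces
\begin{equation}\label{eq:plan-iter}
\|r\|_{\xX(I_j)} \leq 2C\big( \|r(t_j)\|_{L_x^2} + \|e\|_{L_t^1 L_x^2(I_j)} \big).
\end{equation}

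Iterating \eqref{eq:plan-iter} from $j=0$, with $r(t_0)=0$ and $\|e\|_{L_t^1 L_x^2(I_j)} \leq \eps$, gives $\|r\|_{\xX(0,T)} \leq (2C)^{N+1}\eps$. Choosing $\eps \leq \eta_0 (2C)^{-N-1}$---which depends only on $M$ since $\eta_0$ is universal and $N = N(M)$---closes the bootstrap on every subinterval, whence $\|\widetilde{w}\|_{\xX(0,T)} \leq \|w\|_{\xX(0,T)} + \|r\|_{\xX(0,T)} \lesssim_M 1$. The main delicate point is the exponential-in-$N$ amplification of the $L_x^2$ mismatch through the iteration, which is tolerable only because $N$ is fixed by $M$. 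Note that Proposition~\ref{prop: stable} cannot be invoked directly on $[0,T]$ since one has no a priori $L_t^5 L_x^{10}$ bound on $\widetilde{w}$---producing such a bound is precisely the content of this proposition, with $w$ serving as the reference solution with known space-time control.
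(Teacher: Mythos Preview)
Your proof is correct and follows precisely the approach the paper indicates: combine Dodson's global bound (Theorem~\ref{th:Dodson}) for the clean solution $w$ with the same initial data together with the standard stability/perturbation argument underlying Proposition~\ref{prop: stable}. The paper does not write out the details, but its remark following Proposition~\ref{prop: stablework} makes clear that the missing $L_t^5 L_x^{10}$ hypothesis on $\widetilde{w}$ is recovered exactly as you do---by using $w$ as the reference solution, partitioning according to its space-time norm, and iterating the subinterval Strichartz estimate with a continuity/bootstrap argument; your observation that Proposition~\ref{prop: stable} cannot be applied directly (since $\widetilde{w}$ has no a priori $\xX_2$ control) is also the point of the remark.
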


\begin{remark}
In other words, the assumption for $\|\widetilde{w}\|_{L_{t}^{5}L_{x}^{10}(0,T)}$ in Proposition~\ref{prop: stable} is actually a consequence of the boundedness of $\|\widetilde{w}(0)\|_{L_x^2}$ and the smallness of the perturbation $e$ (depending on the size of $\|\widetilde{w}\|_{L_x^2}$). 
\end{remark}

Proposition~\ref{prop: stablework} implies the following a priori bound. 

\begin{cor} \label{cor: stablebound}
Suppose $w$ solves
\begin{equation}
i \d_t w +\Delta w=|w|^{4}w+e
\end{equation}
on $[0,T]$ with $\|w\|_{L_t^{\infty}L_{x}^{2}}\leq M$ and $\|e\|_{L_{t}^{1}L_{x}^{2}}\leq E$. Then we have
\begin{equation}
\|w\|_{\xX(0,T)} \lesssim_{M} 1+E. 
\end{equation}
\end{cor}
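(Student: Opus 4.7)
The plan is to reduce this global a priori bound to the local-in-time statement of Proposition~\ref{prop: stablework} by slicing the interval $[0,T]$ into finitely many sub-intervals on each of which the forcing $e$ is small in $L_t^1 L_x^2$. Let $\eps = \eps(M) > 0$ be the smallness threshold supplied by Proposition~\ref{prop: stablework} corresponding to initial $L_x^2$-mass bounded by $M$. Since $\|e\|_{L_t^1 L_x^2(0,T)} \leq E$, we can choose $0 = T_0 < T_1 < \cdots < T_N = T$ with $N \leq \lceil E/\eps \rceil + 1 \lesssim_M 1 + E$ such that $\|e\|_{L_t^1 L_x^2(T_{k-1}, T_k)} \leq \eps$ for each $k$.

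On each sub-interval $[T_{k-1}, T_k]$, the function $w$ solves the same perturbed equation with initial data $w(T_{k-1})$, which satisfies $\|w(T_{k-1})\|_{L_x^2} \leq M$ by hypothesis. Proposition~\ref{prop: stablework} then yields $\|w\|_{\xX(T_{k-1}, T_k)} \lesssim_M 1$, with the implicit constant depending only on $M$.

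To obtain the global bound, observe that the $\xX_1$-part is immediate from the assumption: $\|w\|_{\xX_1(0,T)} \leq M$. For the $\xX_2$-part, the $L_t^5 L_x^{10}$ norm is additive at the level of fifth powers across the partition, so
\begin{equation*}
\|w\|_{\xX_2(0,T)}^5 = \sum_{k=1}^N \|w\|_{\xX_2(T_{k-1}, T_k)}^5 \lesssim_M N \lesssim_M 1 + E,
\end{equation*}
whence $\|w\|_{\xX_2(0,T)} \lesssim_M (1+E)^{1/5} \lesssim_M 1 + E$. Combining the two parts gives the claim.

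The argument is essentially bookkeeping; the only mild care is the partition step, which requires $\|e\|_{L_t^1 L_x^2}$ to be absolutely continuous with respect to $t$ so that a partition with prescribed sub-interval masses actually exists—this holds because $e \in L_t^1 L_x^2(0,T)$ by assumption. The substantive input is Proposition~\ref{prop: stablework}, whose validity rests on Dodson's scattering bound (Theorem~\ref{th:Dodson}); this corollary is merely its iterate.
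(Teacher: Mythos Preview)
Your argument is correct and is exactly the standard slicing argument the paper has in mind; the paper itself does not spell out a proof for this corollary, merely stating that it follows from Proposition~\ref{prop: stablework}. Your partition-and-sum approach, using the $L_t^\infty L_x^2$ hypothesis to re-initialize at each $T_{k-1}$ and the additivity of $\|\cdot\|_{\xX_2}^5$, is the intended route and in fact yields the slightly sharper bound $\|w\|_{\xX_2(0,T)} \lesssim_M (1+E)^{1/5}$.
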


\begin{remark} \label{rem: useful}
The above a priori bound is enough for one to establish the pathwise global well-posedness for \eqref{eq:un}. 
\end{remark}

We finally present a stability argument which will be useful in the study of stochastic dispersive equations, in particular when it is combined with the so-called Da Prato-Debussche method (\cite{DaPrato_Debussche}). 

\begin{prop} \label{prop: snlsstable}
Let $[a,b]$ be an interval and $u,g\in \xX(a,b)$ satisfying $g(a)=0$ and 
\begin{equation}
u(t)=e^{i(t-a)\Delta}u(a)-i\int_{0}^{t}e^{i(t-s)\Delta} \big( |u(s)|^{4}u(s) \big) {\rm d}s + g(t). 
\end{equation}
Then for every $M>0$, there exists $\eta_{M},B_{M}>0$ such that if 
\begin{equation}
\|u\|_{L_{t}^{\infty}L_{x}^{2}(a,b)} \leq M, \quad \|g\|_{L_{t}^{5}L_{x}^{10}(a,b)} \leq \eta_{M},
\end{equation}
then we have
\begin{equation}
\|u\|_{L_{t}^{5}L^{10}_{x}(a,b)}\leq B_{M}.
\end{equation}
\end{prop}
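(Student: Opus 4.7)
The plan is to reduce to a perturbed mass-critical NLS for the ``remainder'' $v := u - g$ and then extract a global $L_t^5 L_x^{10}$ bound by combining Dodson's theorem with the stability estimate of Proposition~\ref{prop: stable}, via a continuity (bootstrap) argument in time. Since $g(a)=0$, substituting $u = v+g$ into the Duhamel identity for $u$ gives
\begin{equation*}
v(t) = e^{i(t-a)\Delta} v(a) - i \int_a^t e^{i(t-s)\Delta}\bigl(|v+g|^4 (v+g)\bigr)\,ds, \qquad v(a) = u(a),
\end{equation*}
so that $v$ satisfies pure NLS up to the forcing $e := |v+g|^4 (v+g) - |v|^4 v$, which is pointwise controlled by $|e| \lesssim |v|^4 |g| + |g|^5$. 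H\"older in $L_t^5 L_x^{10}$ then yields
\begin{equation*}
\|e\|_{L_t^1 L_x^2(a,T)} \lesssim \|v\|_{L_t^5 L_x^{10}(a,T)}^4 \, \|g\|_{L_t^5 L_x^{10}(a,T)} + \|g\|_{L_t^5 L_x^{10}(a,T)}^5.
\end{equation*}

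Next, I would let $w$ be the global Dodson solution to the pure NLS with $w(a) = u(a)$. Since $\|u(a)\|_{L_x^2} \leq M$, Theorem~\ref{th:Dodson} yields a constant $C_M$ with $\|w\|_{\xX(a,b)} \leq C_M$. The bootstrap is run on $F(T) := \|v\|_{L_t^5 L_x^{10}(a,T)}$, which is continuous in $T$ with $F(a)=0$ (because $v \in \xX(a,b)$), under the bootstrap assumption $F(T) \leq 2 C_M$. Under that assumption, a Strichartz estimate on the Duhamel formula for $v$ gives the crude bound $\|v\|_{L_t^\infty L_x^2(a,T)} \leq M + C(2 C_M + \eta_M)^5 =: M_1$, depending only on $M$ as soon as $\eta_M \leq 1$; meanwhile the error estimate above reads $\|e\|_{L_t^1 L_x^2(a,T)} \lesssim (2 C_M)^4 \eta_M + \eta_M^5$. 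Applying Proposition~\ref{prop: stable} with $\tw = v$ and reference $w$ (noting that $v(a) - w(a) = 0$ and that the stability threshold $\eps_0 = \eps_0(M_1, 2 C_M)$ depends only on $M$), and choosing $\eta_M$ small enough to keep $\|e\|_{L_t^1 L_x^2} \leq \eps_0$, one obtains $\|v - w\|_{\xX(a,T)} \lesssim_M \eta_M$. A further shrinkage of $\eta_M$ forces $F(T) \leq C_M + c_M \eta_M < 2 C_M$, strictly improving the bootstrap assumption, and a standard continuity argument in $T$ propagates this up to $T = b$. Hence $\|u\|_{L_t^5 L_x^{10}(a,b)} \leq 2 C_M + \eta_M =: B_M$.

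The main obstacle is the circular structure of the estimates: applying Proposition~\ref{prop: stable} to $v$ requires a priori bounds on both $\|v\|_{L_t^\infty L_x^2}$ and $\|v\|_{L_t^5 L_x^{10}}$, yet the second is precisely what the bootstrap is supposed to deliver, while the first must be extracted from the perturbed equation itself. The delicate bookkeeping then consists in fixing the quantities strictly in the order $M \to C_M \to M_1 \to \eps_0 \to \eta_M$, so that the final choice of $\eta_M$ depends only on $M$. The reason one cannot bypass Dodson's theorem with a single Strichartz estimate on $u$ is that once $C_M$ is large the nonlinear term $|v|^4 v$ cannot be absorbed in one step; it is the long-time, small-intervals-chaining structure internal to Proposition~\ref{prop: stable} that paces the comparison between $v$ and $w$.
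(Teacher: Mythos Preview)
Your proof is correct. The paper does not give a self-contained proof of this proposition but refers to Proposition~4.6 of \cite{snls_mass_critical}, so a direct line-by-line comparison is not possible; however, the approach you take---subtracting off $g$ to turn the equation for $u$ into a perturbed mass-critical NLS for $v=u-g$, then comparing $v$ to the Dodson solution with the same data via Proposition~\ref{prop: stable} inside a continuity argument---is precisely the Da~Prato--Debussche trick combined with Dodson's global bound that the paper alludes to in the surrounding remarks. Your ordering of the constants $M \to C_M \to M_1 \to \eps_0(M_1,2C_M) \to \eta_M$ is the correct bookkeeping to avoid circularity, and the crude $L_t^\infty L_x^2$ bound on $v$ obtained from Strichartz under the bootstrap hypothesis is exactly what is needed to feed into Proposition~\ref{prop: stable}.
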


This proposition has played an important role in the construction of the solution to \eqref{eq:snls_crit} (see \cite[Proposition~4.6]{snls_mass_critical}). It was stated in a more complicated way there since we took into account of the truncation. But the estimates are uniform in the truncation parameter $m$, they also work here straightforwardly.  

\begin{remark}
For the stochastic NLS of the current form, it is not clear whether one can write the solution in the form \eqref{eq: pp1} so that the error term $e$ can be well estimated. Instead, one needs to study the stability in the form of its integral version as in Proposition~\ref{prop: snlsstable}. 
\end{remark}

\begin{remark}\label{rem: minden}
We finally point out that we also deal with NLS with time dependent truncated nonlinearity of form $\phi_{m}(\|w\|_{\chi_{2}}(0,t))|w|^{4}w$. Propositions~\ref{prop: stable}, ~\ref{prop: stablework}, \ref{prop: snlsstable} and Corollary~\ref{cor: stablebound} all hold if one replaces the nonlinearity $|w|^{4}w$ by $\phi_{m}(\|w\|_{\chi_{2}}(0,t))|w|^{4}w$, and all the implicit constants involved remain unchanged (in particular they are all uniform in $m$). 
\end{remark}

\subsection{Burkholder inequality}

The Burkholder inequality (\cite{BDG, Burkholder}) is a very useful tool in controlling supremum of martingales.  Because of the summability condition \eqref{eq:summable}, we will only need the following simple version. 

\begin{prop}
Let $B(t)$ be a standard Brownian motion. For every $\rho \in [1,+\infty)$ and $p \in [2,+\infty)$, and every $\sigma$ right-continuous adapted process (to $B$) in $L^{p}$, we have
\begin{equation} \label{eq: burkholder}
\bigg\| \sup_{a,b \in [0,T]}  \Big\|\int_{a}^{b} \sigma(s) {\rm d} B(s) \Big\|_{L_{x}^{p}} \bigg\|_{L_{\omega}^{\rho}} \lesssim_{\rho,p} \Big\|\int_{0}^{T} \|\sigma(s)\|_{L_{x}^{p}}^{2} {\rm d}s \Big\|_{L_{\omega}^{\rho/2}}^{1/2}. 
\end{equation}
\end{prop}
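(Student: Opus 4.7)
The plan is to combine two ingredients: a triangle-inequality reduction from the two-parameter supremum to a one-parameter one, and the Banach-space-valued Burkholder-Davis-Gundy inequality applied in $L_x^p$. Set $M_t := \int_0^t \sigma(s)\, dB(s)$, viewed as a continuous $L_x^p$-valued martingale. The first step is immediate: by the triangle inequality in $L_x^p$,
\begin{equation*}
\sup_{a,b \in [0,T]} \|M_b - M_a\|_{L_x^p} \le 2 \sup_{t \in [0,T]} \|M_t\|_{L_x^p},
\end{equation*}
so it suffices to bound $\big\|\sup_t \|M_t\|_{L_x^p}\big\|_{L_\omega^\rho}$.

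Second, I would invoke the vector-valued BDG inequality in $L_x^p$. Since $L_x^p$ is a Banach space of martingale type $2$ (equivalently, has a $2$-smooth norm) for $p \geq 2$, with a type-$2$ constant depending only on $p$, one obtains for every $\rho \geq 1$ the estimate
\begin{equation*}
\Big\|\sup_{t \in [0,T]} \|M_t\|_{L_x^p}\Big\|_{L_\omega^\rho} \lesssim_{p,\rho} \bigg\| \Big(\int_0^T \|\sigma(s)\|_{L_x^p}^2 \, ds\Big)^{1/2} \bigg\|_{L_\omega^\rho}.
\end{equation*}
The right-hand side equals $\big\|\int_0^T \|\sigma(s)\|_{L_x^p}^2\, ds\big\|_{L_\omega^{\rho/2}}^{1/2}$, which is the desired bound. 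As a side remark, if one wishes to separate the role of the supremum, one can apply Doob's $L^\rho$-maximal inequality to the real submartingale $\|M_t\|_{L_x^p}$ first (for $\rho>1$), thereby reducing to $t=T$, and then apply BDG at $t=T$; both routes rest on the same vector-BDG input.

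The main obstacle is this second step. For $p = 2$, the inequality reduces to the classical Hilbert-space BDG (via It\^o's isometry together with the scalar BDG applied with the supremum baked in), which is elementary. For general $p > 2$, however, a naive attempt by applying the scalar BDG pointwise in $x$ combined with Minkowski's integral inequality and the triangle estimate $\|\int_0^T \sigma^2 \, ds\|_{L_x^{p/2}} \leq \int_0^T \|\sigma(s)\|_{L_x^p}^2 \, ds$ \emph{fails}: the two iterated norms $\|\cdot\|_{L_\omega^{\rho/2} L_x^{p/2}}$ and $\|\cdot\|_{L_x^{p/2} L_\omega^{\rho/2}}$ do not interchange in the favourable direction regardless of whether $\rho \geq p$ or $\rho < p$. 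One must therefore genuinely use the $2$-smoothness of $L_x^p$; this is a standard input from the theory of stochastic integration in $M$-type $2$ Banach spaces, which the paper invokes as a black box via the citations to Burkholder and BDG.
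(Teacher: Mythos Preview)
Your proposal is correct. The paper does not actually prove this proposition; it simply states that the proof ``can be found in \cite[Theorem 2.1]{BP}'' and refers to \cite{Brzezniak, UMD} for further details. Your sketch --- reducing the two-parameter supremum to a one-parameter one by the triangle inequality, and then invoking the vector-valued Burkholder--Davis--Gundy inequality in $L_x^p$ via its martingale type~$2$ structure --- is precisely the content of those cited references, so you have supplied what the paper only outsources. Your remark that a naive pointwise-in-$x$ scalar BDG plus Minkowski argument fails (because the $L_\omega^{\rho/2} L_x^{p/2}$ and $L_x^{p/2} L_\omega^{\rho/2}$ norms do not interchange favourably) is a useful clarification of why the $2$-smoothness of $L_x^p$ is genuinely needed.
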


The proof of this inequality, including its more general version involving $\gamma$-Radonifying operators, can be found in \cite[Theorem 2.1]{BP}. We also refer to \cite{Brzezniak, UMD} for more details. We show two typical examples where the bound \eqref{eq: burkholder} is used in this article. Let $W$ be the Wiener process as in the assumption \eqref{eq:summable}. 
\begin{enumerate}
\item A discrete version\footnote{It can be checked as an application of \eqref{eq: burkholder}, but strictly speaking, \eqref{eq: burkholder} is derived from this discrete version.} of \eqref{eq: burkholder}. Let $0<t_{1}<...t_{n}=1$, let $f_{k}$ be a sequence in $L_{x}^{q}$ and $f_{k}\in \mathcal{F}_{t_{k}}$, and let $\frac{1}{p}=\frac{1}{q}+\frac{1}{r}$. Then
\begin{equation}\label{eq: disburkholder}
\begin{aligned}
\|\sum_{k}f_{k}(W(t_{k+1})-W(t_{k}))\|_{L^{\rho}_{\omega}L_{x}^{p}}&\lesssim_{\rho} \|\sum_{k}\|W(t_{k+1})-W({t_{k}})\|_{L_{x}^{q}}^{2}\|f_{k}\|_{p}^{2}\|_{L_{\omega}^{\rho/2}}^{1/2}\\
&\lesssim_{\rho, \lambda_{noi}}\|\sum_{k}\|t_{k+1}-t_{k}\|f_{k}\|_{p}^{2}\|_{L_{\omega}^{\rho/2}}^{1/2}
\end{aligned}
\end{equation}
\item Estimate regarding the Duhamel formula for Schrodinger equation. Let $u(s)\in \mathcal{F_{s}}$ be an adapted process in $L_{x}^{q}$, let $p\geq 2$, and $\frac{1}{p'}=\frac{1}{q}+\frac{1}{r}$ (this simply means $p'\leq q$), then for any $t\in \mathbb{R}$,
\begin{equation}\label{eq: duhburkholder}
\begin{aligned}
&\|\sup_{a,b\in [0,T]}\int_{a,b}e^{i(t-s)\Delta}u(s)ds\|_{L_{\omega}^{\rho}L_{x}^{p}}\\
&\lesssim_{\rho} \sum_{k} \|\int_{0}^{T}\|e^{i(t-s)\Delta}V_{k}u(s)\|_{L_{x}^{p}}^{2}ds\|_{L_{\omega}^{\rho/2}}^{1/2}\\
&\lesssim_{\rho,\noi} \|\int_{0}^{T}(t-s)^{1-\frac{2}{p}}\|u(s)\|_{L_{x}^{q}}^{2}ds\|_{L_{\omega}^{\rho/2}}^{1/2}
\end{aligned}
\end{equation}
In the last step, we have used dispersive estimate \eqref{eq: dispersive}.
\end{enumerate}

\subsection{Kolmogorov's criterion}

We will need the following Kolmogorov's continuity criterion. 

\begin{prop}\label{prop: kc}
Let $q\geq 2$ and $\beta>1/q$. Let $\yY$ be a Banach space, and let $X: [0,T] \rightarrow \yY$ be a stochastic process such that
\begin{equation}
\|X(t)-X(s)\|_{L_{\omega}^{q} \yY} \lesssim |t-s|^{\beta}. 
\end{equation} 
For every $\alpha \in [0,\beta-1/q)$, let $K_{\alpha} = K_{\alpha}(\omega)$ be the random variable defined by
\begin{equation}
K_{\alpha}(\omega) := \sup_{s \neq t} \frac{\|X(t) - X(s)\|_{\yY}}{|t-s|^{\alpha}}. 
\end{equation}
Then we have
\begin{equation}
\|K_{\alpha}\|_{L_{\omega}^{q}} \lesssim_{\alpha,\beta,q} 1. 
\end{equation}
\end{prop}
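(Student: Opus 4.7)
The plan is to run the classical dyadic chaining argument of Kolmogorov. I would first set $t^n_k := kT/2^n$ for $0 \le k \le 2^n$ and define
\begin{equation*}
M_n := \max_{0 \le k < 2^n} \|X(t^n_{k+1}) - X(t^n_k)\|_{\yY}.
\end{equation*}
A crude maximal-to-$\ell^q$ bound, combined with the moment hypothesis, gives
\begin{equation*}
\|M_n\|_{L^q_\omega}^q \le \sum_{k=0}^{2^n - 1} \|X(t^n_{k+1}) - X(t^n_k)\|_{L^q_\omega \yY}^q \lesssim 2^n \cdot (T 2^{-n})^{q\beta},
\end{equation*}
and hence $\|M_n\|_{L^q_\omega} \lesssim 2^{-n(\beta - 1/q)}$. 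The condition $\beta > 1/q$ is exactly what ensures this is summable after multiplication by the scale factor $2^{n\alpha}$ below.

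Next, for any pair of dyadic points $s<t$ with $2^{-(n_0+1)} T \le t-s \le 2^{-n_0} T$, the standard chaining argument writes $X(t)-X(s)$ as a telescoping sum of increments between consecutive dyadic neighbours at successive scales $n \ge n_0$, yielding the pathwise bound
\begin{equation*}
\|X(t)-X(s)\|_{\yY} \lesssim \sum_{n \ge n_0} M_n \lesssim (t-s)^\alpha \sum_{n \ge n_0} 2^{n\alpha} M_n.
\end{equation*}
Taking the supremum over dyadic pairs $s \neq t$ and applying the triangle inequality in $L^q_\omega$ gives
\begin{equation*}
\Big\| \sup_{s \ne t \text{ dyadic}} \frac{\|X(t)-X(s)\|_{\yY}}{|t-s|^\alpha} \Big\|_{L^q_\omega} \lesssim \sum_{n \ge 0} 2^{n\alpha} \|M_n\|_{L^q_\omega} \lesssim \sum_{n \ge 0} 2^{-n(\beta-1/q-\alpha)} \lesssim_{\alpha,\beta,q} 1,
\end{equation*}
since $\alpha < \beta - 1/q$ by assumption.

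Finally, to pass from dyadic pairs to all $s,t$, the bound above shows that almost surely $X$ is $\alpha$-H\"older continuous on the dense dyadic set with a modulus $C_\alpha(\omega) \in L^q_\omega$. This forces $X$ to admit a unique $\alpha$-H\"older continuous modification, and for the continuous version (which is implicitly assumed when writing $K_\alpha$), the H\"older estimate extends from dyadic pairs to arbitrary $s \ne t$ by continuity, giving $\|K_\alpha\|_{L^q_\omega} \lesssim_{\alpha,\beta,q} 1$. The argument has no genuine obstacle; the only subtle point is that the loss in passing from the maximum of $2^n$ quantities to the $L^q$-norm is only a factor of $2^{n/q}$ (not $2^n$), which is precisely what creates the summability gap captured by $\alpha < \beta - 1/q$.
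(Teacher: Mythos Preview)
Your proof is the standard dyadic chaining argument for Kolmogorov's criterion and is correct. The paper does not actually prove this proposition --- it is stated as a known preliminary result and invoked without proof --- so there is nothing to compare against; your argument is exactly the classical one that would be cited.
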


In particular, for the noise $W$ satisfying Assumption~\ref{as:noise_initial}, applying Burkholder inequality \eqref{eq: burkholder}, \eqref{eq: disburkholder} and the above Kolmogorov's continuity criterion, we see that for all $p \in [1,+\infty]$ and $\alpha<\frac{1}{2}$, we have
\begin{equation} \label{eq: kccontorlosc}
\bigg\|\sup_{s,t \in [0,1]}\frac{\|W(t)-W(s)\|_{L_{x}^{p}}}{|t-s|^{\alpha}} \bigg\|_{L_{\omega}^{\rho}} \lesssim_{\Lambda_\noi, \alpha, \rho} 1. 
\end{equation}

\section{Uniform boundedness of $\{u^{(n)}_{m}\}$ -- proof of Theorem~\ref{th:unm_uniform_bd_stable}} \label{sec:unm_bd}

The aim of this section is to prove the uniform bounds in Theorem~\ref{th:unm_uniform_bd_stable}. For simplicity of presentation, we prove the second bound in \eqref{eq:unm_uniform_bd} only, which corresponds to $m=+\infty$. Uniform-in-$m$ bounds for $\{\unm\}$ can be obtained in essentially exactly the same way. 

\subsection{Overview of the  proof}

We start by recalling some notations that will be used below, as well as introducing some new ones. We fix a large $n$, and will obtain bounds that do not depend on $n$. We assume $n \gg 1$ so that $\|\pi^{(n)}\| \ll 1$. Recall that we fix $n$ throughout, so we write $t_{j} = t_{j}^{(n)}$ for simplicity, and $\Delta_j W = W(t_{j+1}) - W(t_j)$. For $s \in [0,1]$, we let $j(s) \in \{0, \dots, n\}$ such that $s \in [t_{j(s)}, t_{j(s)+1})$. We also write $[s] = t_{j(s)}$. Finally, we write $v = u^{(n)}$ for simplicity. 

We first observe that for every $\rho \in [1,+\infty)$ and $p \in [1,+\infty]$, we have
\begin{equation} \label{eq: sb}
\|W(t)-W(s)\|_{L_{\omega}^{\rho}L_{x}^{p}}\leq \sum_{k} \big\|V_{k} \big( B_{k}(t)-B_{k}(s) \big) \big\|_{L_{\omega}^{\rho}L_{x}^{p}} \lesssim_{\rho,\Lambda_{\noi}} |t-s|^{1/2}. 
\end{equation}
Also, by mass conservation law, we have
\begin{equation}\label{eq: massapriori}
\|v\|_{L_{t}^{\infty}L_{x}^{2}(0,1)}\leq \Lambda_{ini}\lesssim 1.
\end{equation}
Recall that $v=u^{(n)}$ satisfies the classical PDE \eqref{eq: un2}. Expanding $v$ by Duhamel formula at $a \in [0,1]$, we get
\begin{equation}\label{eq: firstexpansion}
v(t) = e^{i(t-a)\Delta} v(a) - i \int_{a}^{t} e^{i(t-s)} \nN\big(v(s)\big) {\rm d} s -i\int_{a}^{t} e^{i(t-s)\Delta} \Big( v(s) \frac{\Delta_{j(s)}W}{t_{j(s)+1}-t_{j(s)}} \Big) {\rm d} s. 
\end{equation}
We now separate out the martingale part in the last term in \eqref{eq: firstexpansion}. To do this, we further expand $v(s)$ via Duhamel formula at time $[s]$. Observe that for any $r\in ([s],s)$, we have $j(r)=j(s)$. Thus, we have the expansion
\begin{equation} \label{eq: secondexpansion}
v(s) = e^{i(s-[s])\Delta} v([s]) - i \int_{[s]}^{s} e^{i(s-r)} \nN(v(r)) {\rm d}r -i \int_{[s]}^{s} e^{i(s-r)\Delta} \left(v(r) \frac{\Delta_{j(s)}W}{t_{j(s)+1}-t_{j(s)}} \right) {\rm d}r. 
\end{equation}
Combining \eqref{eq: firstexpansion} and \eqref{eq: secondexpansion}, we get
\begin{equation} \label{eq: fullexpansion}
\begin{aligned}
v(t) &= e^{i(t-a)\Delta} v(a) - i \int_{a}^{t} e^{i(t-s)\Delta} \nN(v(s)) {\rm d}s\\
&-\int_{a}^{t} e^{i(t-s)\Delta} \left(\frac{\Delta_{j(s)}W}{t_{j(s)+1}-t_{j(s)}} \int_{[s]}^{s} e^{i(s-r)\Delta} \nN(v(r)) {\rm d}r \right) {\rm d}s\\
&-i \int_{a}^{t} e^{i(t-s)\Delta} \left( \frac{\Delta_{j(s)}W}{t_{j(s)+1}-t_{j(s)}} \cdot e^{i(s-[s])\Delta}v([s]) \right) {\rm d}s\\
&- \int_{a}^{t} e^{i(t-s)\Delta} \left[\frac{\Delta_{j(s)}W}{t_{j(s)+1}-t_{j(s)}} \int_{[s]}^{s} e^{i(s-r)\Delta} \left( \frac{\Delta_{j(s)}W}{t_{j(s)+1}-t_{j(s)}} v(r) \right) {\rm d}r \right] {\rm d}s. 
\end{aligned}
\end{equation}
To simplify the above expression, we define the following quantities: 
\begin{equation}
\sS_{\mar}(a,t) = -i\int_{a}^{t} e^{i(t-s)\Delta} \left(\frac{\Delta_{j(s)}W}{t_{j(s)+1}-t_{j(s)}} \cdot e^{i(s-[s])\Delta} v([s])\right) {\rm d}s\;, 
\end{equation}
and
\begin{equation}
\sS_{\qua}(a,t) = - \int_{a}^{t} e^{i(t-s)\Delta} \left[ \frac{\Delta_{j(s)}W}{t_{j(s)+1}-t_{j(s)}} \int_{[s]}^{s} e^{i(s-r)\Delta} \left( \frac{W(t_{j(s)+1})-W(t_{j(s)})}{t_{j(s)+1}-t_{j(s)}}v(r) \right) {\rm d}r \right] {\rm d}s\;,
\end{equation}
and
\begin{equation}
\widetilde{N} \big(v; s\big) = \frac{\Delta_{j(s)}W}{t_{j(s)+1}-t_{j(s)}} \int_{[s]}^{s}e^{i(s-r)\Delta} \nN \big(v(r) \big) {\rm d}r. 
\end{equation}
The expression \eqref{eq: fullexpansion} can then be simplified to
\begin{equation} \label{eq: workingdu}
\begin{split}
v(t) = &e^{i(t-a)\Delta} v(a) - i \int_{a}^{t} e^{i(t-s)\Delta} \nN(v(s)) {\rm d}s\\
&- \int_{a}^{t} e^{i(t-s)\Delta} \sn {\rm d}s + \sS_{\mar}(a,t) + \sS_{\qua}(a,t). 
\end{split}
\end{equation}
We will view $\ssm$ and $\ssq$ as source terms and $\sn$ as a small perturbation. Roughly speaking, we want to get supreme-in-$a$ bounds for $\ssm$ and $\ssq$. For $\ssm$, we will explore the martingale structure. For $\ssq$, we will use the fact $\sum_{j}\|W(t_{j+1})-W_{t_{j}}\|^{2}\sim \sum_{j}|t_{j+1}-t_{j}| \lesssim 1$ as long as $\|\cdot\|$ is a reasonable $L^p$ space norm which will be specified later. As for $\sn$, we observe that at least formally, its norm is of size $\|W(t_{j+1})-W_{t_{j}}\| \cdot \| \nN\big(v(s)\big)\|\sim |t_{j+1}-t_{j}|^{1/2}\|\nN(v(s))\|$, which can be treated perturbatively via a bootstrap argument. Again, all the norms and bounds will be specified later. 

We fix a small parameter $\eta>0$ (independent of $n$), and separate two situations depending on whether $\|\Delta_{j} W\| < \eta$ or not, and the norm used will be specified later. In the (very rare) event it is bigger than $\eta$,  we view $v \Delta_j W$ as a perturbative term, and use brute force stability arguments for \eqref{eq: un2} together with a quantitative estimate implying such an event is rare. In the more common situation when $\|\Delta_j W\| < \eta$, we use maximal estimates for $\sS_{\max}$ and $\sS_{\qua}$ to combine sub-intervals in $\pi^{(n)}$ and run bootstrapping arguments on these joint intervals. 

We start with the maximal estimates for $\ssm$ and $\ssq$.

\subsection{Control of source term $\ssm$, $\ssq$ }

Let $\sS_{\max}^{*}$ and $\sS_{\qua}^{*}$ be defined as
\begin{equation} \label{eq:smmax}
	\sS^{*}_{\mar}(t) :=  \sup_{0 \leq \tau \leq t}   \Big\| \int_{0}^{\tau} e^{i(t-s)\Delta} \left( \frac{\Delta_{j(s)}W}{t_{j(s)+1}-t_{j(s)}} e^{i(s-[s]\Delta)}v([s]) \right) {\rm d} s \Big\|_{L_x^{10}}\;, 
	\end{equation}
	and
	\begin{equation}\label{eq: sqmax}
	\sS^{*}_{\qua}(t) := \int_{0}^{t}\int_{[s]}^{s}\Big\|e^{i(t-s)\Delta} \left[ \frac{\Delta_{j(s)}W}{t_{j(s)+1}-t_{j(s)}}e^{i(s-r)\Delta} \left( \frac{\Delta_{j(s)}W}{t_{j(s)+1}-t_{j(s)}}v(r) \right) \Big\|_{L_{x}^{10}} {\rm d} r \right] {\rm d}s\;. 
	\end{equation}
It is clear that
\begin{equation}\label{eq: estimatestragiht}
\sup_{a \in [0,t]} \|\ssm(a,t)\|_{L_{x}^{10}} \leq \sm(t)\;, \quad \sup_{a \in [0,t]}\|\ssq(a,t)\|_{L_{x}^{10}} \leq \sq(t)\;. 
\end{equation}
The main technical bound in this subsection is the following lemma. 

\begin{lem}\label{le:source_max}
For every $\rho\geq 1$, we have
\begin{equation}\label{eq: prepareforsplit}
\|\sm(t)\|_{L_{\omega}^{\rho}L_{t}^{5}(0,1)}\lesssim_{\rho} 1\;, \quad \|\sq(t)\|_{L_\omega^{\rho}L_{t}^{5}(0,1)}\lesssim_{\rho} 1\;, 
\end{equation}
where we have omitted the dependence on $\Lambda_\ini$ and $\Lambda_\noi$ for notational simplicity. 
\end{lem}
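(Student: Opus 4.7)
The plan is to treat $\sm$ and $\sq$ separately: the former has a hidden discrete-time martingale structure at the grid points $\{t_j\}$, while the latter must be bounded by brute force since the $L_x^{10}$ norm sits inside the double integral.

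For $\sm$, the key observation is that on each subinterval $[t_j, t_{j+1}]$, the driving factor $\frac{\Delta_j W}{t_{j+1}-t_j}$ is independent of $\mathcal{F}_{t_j}$ with mean zero, while $v(t_j)$ is $\mathcal{F}_{t_j}$-measurable. Hence at grid times $\tau = t_k$ the process
\[
F(t, t_k) := \int_{0}^{t_k} e^{i(t-s)\Delta}\!\left( \tfrac{\Delta_{j(s)} W}{t_{j(s)+1}-t_{j(s)}}\, e^{i(s-[s])\Delta} v([s]) \right) ds
\]
is a discrete $L_x^{10}$-valued martingale in $k$. I would first apply Doob's maximal inequality to dominate $\sup_k \|F(t, t_k)\|_{L_x^{10}}$ by $\|F(t, t_n)\|_{L_x^{10}}$, then invoke Burkholder--Davis--Gundy (which holds in the UMD space $L_x^{10}$, in the spirit of \eqref{eq: burkholder}--\eqref{eq: duhburkholder}) to bound its $L_\omega^\rho$-norm by the quadratic variation $\bigl\| (\sum_j \|Y_j^{(t)}\|_{L_x^{10}}^2)^{1/2} \bigr\|_{L_\omega^\rho}$, where $Y_j^{(t)}$ denotes the increment on $[t_j, t_{j+1}]$. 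Each $Y_j^{(t)}$ I would estimate by replacing $e^{i(s-t_j)\Delta} v(t_j)$ by $v(t_j)$ up to a small $s$-dependent error, applying the dispersive bound \eqref{eq: dispersive} to $e^{i(t-t_j)\Delta}$, and using H\"older in $x$ via $\|\Delta_j W \cdot v(t_j)\|_{L_x^{10/9}} \le \|\Delta_j W\|_{L_x^5} \|v(t_j)\|_{L_x^2}$. Combined with the Gaussian moment bound $\|\Delta_j W\|_{L_\omega^\rho L_x^5} \lesssim \sqrt{t_{j+1}-t_j}$ from \eqref{eq: sb} and the a priori mass estimate \eqref{eq: massapriori}, the quadratic variation collapses to a Riemann sum that is uniformly bounded in $n$. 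Intermediate $\tau \in (t_k, t_{k+1})$ are absorbed by estimating the partial integral directly; since its length is at most $\|\pi^{(n)}\|$, even the crude bound using $\|\Delta_k W\|_{L_x^\infty}/(t_{k+1}-t_k)$ yields an $o(1)$ contribution.

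For $\sq$, no martingale cancellation is available, so I would bound the integrand by the triangle inequality. Applying the dispersive estimate to both propagators $e^{i(t-s)\Delta}$ and $e^{i(s-r)\Delta}$, and distributing each copy of $\frac{\Delta_{j(s)}W}{t_{j(s)+1}-t_{j(s)}}$ via H\"older into an appropriate $L_x^p$ space while keeping $v(r)$ in $L_x^2$, the two stochastic factors combine into $\|\Delta_{j(s)}W\|_{L_x^p}^2 / (t_{j(s)+1}-t_{j(s)})^2$. Integrating over $r \in ([s], s)$ and then $s \in [t_j, t_{j+1}]$ supplies two positive powers of $(t_{j+1}-t_j)$, so that the $j$th contribution, taken in $L_\omega^{\rho/2}$, becomes of size $(t_{j+1}-t_j)^\alpha$ for some $\alpha > 0$ determined by the dispersive exponents. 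The sum over $j$ therefore converges to a bounded Riemann integral, and the outer $L_t^5$ integrability in $t$ follows from choosing the exponents so that the dispersive kernel $(t-s)^{-(1/2-1/p)}$ remains locally integrable.

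The main technical obstacle for $\sm$ is the adaptedness mismatch: $\frac{\Delta_{j(s)}W}{t_{j(s)+1}-t_{j(s)}}$ is $\mathcal{F}_{t_{j(s)+1}}$-measurable rather than $\mathcal{F}_s$-measurable, so $F(t,\tau)$ is a genuine martingale only at the grid times $t_k$, forcing the decomposition into a grid-indexed martingale part and a partial-interval remainder handled by different techniques. A secondary bookkeeping difficulty is that the supremum over $\tau$ sits inside $L_x^{10}$, whereas the $L_t^5$ norm acts on the outer parameter $t$. I would resolve this by pulling out $e^{it\Delta}$ through $e^{i(t-s)\Delta} = e^{it\Delta} e^{-is\Delta}$ to write $F(t,\tau) = e^{it\Delta} G(\tau)$ with $G(\tau) := \int_{0}^{\tau} e^{-is\Delta}(\cdots)\, ds$; once $\sup_\tau \|G(\tau)\|_{L_x^2}$ is controlled in $L_\omega^\rho$ via the martingale argument above, a single application of the Strichartz estimate \eqref{eq: stri} at the end produces the desired $L_t^5 L_x^{10}$ bound.
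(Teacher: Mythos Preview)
Your core strategy for $\sm$ (discrete martingale at grid times, Burkholder in $L_x^{10}$, dispersive estimate on each increment, separate treatment of the partial interval $[[\tau],\tau]$) and for $\sq$ (brute-force via dispersive and H\"older) is correct and is exactly what the paper does. Two small corrections: the H\"older exponent in $\|\Delta_j W \cdot v(t_j)\|_{L_x^{10/9}}$ forces $\|\Delta_j W\|_{L_x^{5/2}}$, not $L_x^5$; and the step ``replacing $e^{i(s-t_j)\Delta} v(t_j)$ by $v(t_j)$ up to a small error'' is unjustified here (no $H_x^1$ bound is available at this stage) and unnecessary --- just keep the inner propagator and use its unitarity on $L_x^2$, as in \eqref{eq: qq} and \eqref{eq: qq2}.

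The genuine gap is in your last paragraph. Writing $F(t,\tau)=e^{it\Delta}G(\tau)$ and controlling $\sup_\tau\|G(\tau)\|_{L_x^2}$ does \emph{not} yield a bound on $\|\sm\|_{L_t^5}$ via a single Strichartz estimate: $\sm(t)=\sup_{\tau}\|e^{it\Delta}G(\tau)\|_{L_x^{10}}$ involves a $\tau$ that depends on $t$, while Strichartz applies to $e^{it\Delta}$ acting on a \emph{fixed} initial datum. There is no way to pull the $\tau$-supremum through the $L_t^5 L_x^{10}$ norm.

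The paper resolves this differently and more simply. One may assume $\rho\ge 5$; then Minkowski gives the chain of embeddings
\[
L_t^{\infty}L_\omega^{\rho}\hookrightarrow L_t^{5}L_\omega^{\rho}\hookrightarrow L_\omega^{\rho}L_t^{5},
\]
so it suffices to show $\sup_{t\in[0,1]}\|\sm(t)\|_{L_\omega^\rho}\lesssim 1$ (and likewise for $\sq$). For each fixed $t$ this is precisely what your martingale-in-$L_x^{10}$ argument from the second paragraph delivers: Burkholder followed by the dispersive bound $(t-s)^{-2/5}$ and H\"older collapses the square function to $\sum_j(t_{j+1}-t_j)^{-1}\big(\int_{t_j}^{t_{j+1}}(t-s)^{-2/5}\,ds\big)^2\le\int_0^1(t-s)^{-4/5}\,ds\lesssim 1$. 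No Strichartz is needed anywhere.
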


We only need to prove the above lemma for $\rho\geq 5$. By Minkowski inequality and that our interval is $[0,1]$ (having finite length), we have the embedding
\begin{equation}\label{eq: min}
L_{t}^{\infty}L_{\omega}^{\rho}\hookrightarrow L_{t}^{5}L_{\omega}^{\rho}\hookrightarrow L_{\omega}^{\rho}L_{t}^{5}
\end{equation}
Hence, \eqref{eq: prepareforsplit} will follow from the bounds
\begin{equation}\label{eq: p1es}
\sup_{t \in [0,1]} \|\sS^*_\mar(t)\|_{L^{\rho}_{\omega}}\lesssim 1
\end{equation}
and
\begin{equation}\label{eq: p2es}
\sup_{t \in [0,1]} \|\sS^*_\qua(t)\|_{L_{\omega}^{\rho}}\lesssim 1\;. 
\end{equation}
Fix $t \in [0,1]$. Before we start the proof of \eqref{eq: p1es} and \eqref{eq: p2es},  we first prove the following technical lemma which handles fluctuation in any small interval $[t_{j}, t_{j+1}]$. 

\begin{lem}\label{lem: smallosc}
For any $\kappa < \frac{1}{10}$, we have
\begin{equation}\label{eq: noworrytail}
\sup_{0 \leq j \leq n-1} \left\|\sup_{\tau\in [0,t]} \left\| \int_{[\tau]}^{\tau} e^{i(t-s)\Delta} \left(\frac{\Delta_{j(s)}W}{t_{j(s)+1}-t_{j(s)}} \cdot e^{i(s-[s])\Delta} v(s) \right) {\rm d}s \right\|_{L_{x}^{10}} \right\|_{L_{\omega}^{\rho}} \lesssim_{\rho,\kappa} \|\pi^{(n)}\|^{\frac{1}{10}-\kappa}\;.
\end{equation}
\end{lem}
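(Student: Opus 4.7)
On each sub-interval $\tau\in[t_j,t_{j+1})$ the integrand collapses to a single piecewise-constant noise factor $h_j^{-1}\Delta_j W$ (with $h_j:=t_{j+1}-t_j\le\|\pi^{(n)}\|$) multiplied by $e^{i(s-t_j)\Delta}v(t_j)$, so the claim reduces to showing, uniformly in $j$,
\begin{equation*}
E_j:=\Big\|\sup_{\tau\in[t_j,t_{j+1}]\cap[0,t]}\Big\|\int_{t_j}^\tau e^{i(t-s)\Delta}\big(h_j^{-1}\Delta_j W\cdot e^{i(s-t_j)\Delta}v(t_j)\big)\,{\rm d}s\Big\|_{L^{10}_x}\Big\|_{L^\rho_\omega}\lesssim h_j^{1/10},
\end{equation*}
and then combining these across $j$ via the rough bound $\|\max_j X_j\|_{L^\rho_\omega}\le n^{1/\rho}\max_j\|X_j\|_{L^\rho_\omega}$. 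Since $n\le\|\pi^{(n)}\|^{-1}$, this produces the announced $\|\pi^{(n)}\|^{1/10-1/\rho}$, with $\kappa=1/\rho$ absorbed by taking $\rho$ large depending on $\kappa$.

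\textbf{Main estimate.} For each $E_j$, I would discard the pathwise supremum in $\tau$ through the triangle inequality. At each $s$ one then combines the dispersive estimate \eqref{eq: dispersive} at $p=10$ (giving $\|e^{i(t-s)\Delta}F\|_{L^{10}_x}\lesssim(t-s)^{-2/5}\|F\|_{L^{10/9}_x}$) with H\"older based on $\tfrac{1}{10/9}=\tfrac{1}{5/2}+\tfrac{1}{2}$ and $L^2_x$-unitarity of $e^{i(s-t_j)\Delta}$:
\begin{equation*}
\|h_j^{-1}\Delta_j W\cdot e^{i(s-t_j)\Delta}v(t_j)\|_{L^{10/9}_x}\le h_j^{-1}\|\Delta_j W\|_{L^{5/2}_x}\|v(t_j)\|_{L^2_x}.
\end{equation*}
Pathwise mass conservation \eqref{eq: massapriori} controls $\|v(t_j)\|_{L^2_x}\le\Lambda_\ini$ almost surely, and \eqref{eq: sb} at $p=5/2$ (admissible by interpolating the $L^1_x$ and $L^\infty_x$ bounds on each $V_k$ from Assumption~\ref{as:noise_initial}) delivers $\|\Delta_j W\|_{L^\rho_\omega L^{5/2}_x}\lesssim_\rho h_j^{1/2}$. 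Moving $L^\rho_\omega$ inside the $s$-integral by Minkowski then yields
\begin{equation*}
E_j\lesssim_\rho h_j^{-1/2}\int_{t_j}^{t_{j+1}}(t-s)^{-2/5}\,{\rm d}s\lesssim h_j^{-1/2+3/5}=h_j^{1/10},
\end{equation*}
where the dispersive singularity is integrable at $s=t$, so the case $t\in[t_j,t_{j+1}]$ causes no issue.

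\textbf{Main obstacle.} The only real subtlety is the H\"older split. The partner of the noise factor has to be in $L^2_x$, since mass conservation is the only pathwise information on $v(t_j)$; this forces $\Delta_j W$ into $L^{5/2}_x$. The resulting net exponent $\tfrac{3}{5}-\tfrac{1}{2}=\tfrac{1}{10}$ is sharp for this strategy, and any coarser pairing such as $L^\infty_x\times L^{10/9}_x$ would require $L^{10/9}_x$ bounds on $v(t_j)$ that are not available a priori.
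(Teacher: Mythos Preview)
Your per-interval estimate is exactly the paper's: dispersive decay $L^{10/9}_x\to L^{10}_x$, H\"older split $L^{5/2}_x\times L^2_x$, mass conservation on $v$, and $\|\Delta_j W\|_{L^\rho_\omega L^{5/2}_x}\lesssim h_j^{1/2}$, yielding the pathwise bound $\|\Delta_{j(\tau)}W\|_{L^{5/2}_x}\,h_{j(\tau)}^{-2/5}$ (equivalently $E_j\lesssim h_j^{1/10}$ after taking $L^\rho_\omega$). So the analytic core is fine.

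There is, however, a genuine slip in your combining step. You assert $n\le\|\pi^{(n)}\|^{-1}$, but for an arbitrary partition $\sum_j h_j=1$ forces $n\cdot\max_j h_j\ge 1$, i.e.\ $n\ge\|\pi^{(n)}\|^{-1}$, the reverse inequality. So the chain $n^{1/\rho}\|\pi^{(n)}\|^{1/10}\le\|\pi^{(n)}\|^{1/10-1/\rho}$ fails as written. The fix is immediate: instead of passing through $n$, use your bound $E_j\lesssim h_j^{1/10}$ directly in $\|\max_j Y_j\|_{L^\rho_\omega}^\rho\le\sum_j E_j^\rho\lesssim\sum_j h_j^{\rho/10}\le\|\pi^{(n)}\|^{\rho/10-1}\sum_j h_j=\|\pi^{(n)}\|^{\rho/10-1}$ for $\rho>10$, which gives the exponent $1/10-1/\rho$ you want; then take $\rho$ large according to $\kappa$ as you already planned.

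By contrast, the paper keeps the supremum over $\tau$ \emph{inside} $L^\rho_\omega$ and handles it in one stroke via Kolmogorov's criterion \eqref{eq: kccontorlosc}: from the pathwise bound $\|\Delta_{j(\tau)}W\|_{L^{5/2}_x}\,h_{j(\tau)}^{-2/5}$ one writes $h_j^{-2/5}=h_j^{-(1/2-\kappa)}\,h_j^{1/10-\kappa}\le\|\pi^{(n)}\|^{1/10-\kappa}\,h_j^{-(1/2-\kappa)}$ and then $\big\|\sup_j\|\Delta_j W\|_{L^{5/2}_x}h_j^{-(1/2-\kappa)}\big\|_{L^\rho_\omega}\lesssim 1$. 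This route avoids the $\ell^\rho$ counting altogether and delivers the bound for every $\rho\ge1$ directly, whereas your route requires first going to large $\rho$ and then embedding down.
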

\begin{proof}
Note that for $[\tau] \leq s \leq \tau$, we have $[\tau]=t_{j(s)}$ and $\tau \in [t_{j(s)}, t_{j(s)+1})$. Thus, we have
\begin{equation}\label{eq: qq}
\begin{aligned}
&\left\|\int_{[\tau]}^{\tau}e^{i(t-s)\Delta} \left(\frac{\Delta_{j(s)}W}{t_{j(s)+1}-t_{j(s)}}e^{i(s-[s])\Delta}v(s) \right) {\rm d}s \right\|_{L_{x}^{10}}\\
\lesssim &\int_{[\tau]}^{\tau} \left\|e^{i(t-s)\Delta} \left(\frac{\Delta_{j(s)}W}{t_{j(s)+1}-t_{j(s)}} \cdot e^{i(s-[s])\Delta} v(s) \right)\right\|_{L_{x}^{10}} {\rm d}s\\
\lesssim &\int_{[\tau]}^{\tau} (t-s)^{-\frac{2}{5}} \left\|\frac{\Delta_{j(s)}W}{t_{j(\tau)+1}-t_{j(\tau)}} \cdot e^{i(s-[s])\Delta}v(s) \right\|_{L_{x}^{10/9}} {\rm d}s\\
\lesssim &\frac{1}{t_{j(s)+1}-t_{j(s)}}\int_{[\tau]}^{\tau} (t-s)^{-\frac{2}{5}} \|\Delta_{j(s)}W\|_{L_{x}^{5/2}} {\rm d}s\\
\lesssim &\frac{\|\Delta_{j(\tau)}W\|_{L_{x}^{5/2}}}{(t_{j(\tau)+1}-t_{j(\tau)})^{2/5}}\;. 
\end{aligned}
\end{equation}
Here, we applied the dispersive estimate \eqref{eq: dispersive} in the third line above, and also used $\|e^{i(s-[s])\Delta}v(s)\|_{L_{x}^{2}} \lesssim 1$. In the fourth line, we have applied H\"older $\|fg\|_{L_{x}^{10/9}} \lesssim \|f\|_{L_{x}^{5/2}}\|g\|_{L_{x}^{2}}$. The claim \eqref{eq: noworrytail} then directly follows from \eqref{eq: kccontorlosc} by taking $L_{\omega}^{\rho}$-norm on both sides and employing the definition that $\|\pi^{(n)}\| = \sup_j |t_{j+1} - t_j|$. 
\end{proof}

We are now ready to prove the bound for $\sS_{\mar}^{*}$. 

\begin{proof}[Proof of \eqref{eq: p1es}]
Splitting the integral in $[0,\tau]$ into $[0,[\tau]]$ and $[[\tau],\tau]$, we have
\begin{equation}
\begin{aligned}
&\int_{0}^{\tau} e^{i(t-s)\Delta} \left(\frac{\Delta_{j(s)}W}{t_{j(s)+1}-t_{j(s)}} \cdot e^{i(s-[s]\Delta)} v([s]) \right) {\rm d}s\\
=&\int_{0}^{[\tau]}e^{i(t-s)\Delta} \left( \dws e^{i(s-[s]\Delta)}v([s]) \right) {\rm d}s\\
+& \int_{[\tau]}^{\tau} e^{i(t-s)\Delta} \left( \dws e^{i(s-[s]\Delta)}v([s]) \right) {\rm d}s\;. 
\end{aligned}
\end{equation}
The second term could be controlled directly via Lemma \ref{lem: smallosc}. Hence, we only need to prove the bound (for the first term)
\begin{equation}\label{eq: intermar}
\left\| \sup_{\tau} \left\| \int_{0}^{[\tau]}e^{i(t-s)\Delta} \left(\dws \cdot e^{i(s-[s]\Delta)}v([s]) \right) {\rm d}s \right\|_{L_{x}^{10}} \right\|_{L_{\omega}^{\rho}}
\lesssim 1\;. 
\end{equation}
Note that
\begin{equation}
\int_{0}^{[\tau]}e^{i(t-s)\Delta} \left( \dws e^{i(s-[s]\Delta)}v([s]) \right) {\rm d}s =\sum_{j=0}^{j(\tau)} \frac{\Delta_j W}{t_{j+1} - t_{j}} \int_{t_{j-1}}^{t_{j}} e^{i(s-t_j)\Delta}v(t_j) {\rm d}s
\end{equation}
is a discrete martingale  in $L_{x}^{10}$, so by Burkholder inequality \eqref{eq: burkholder}, we have
\begin{equation}\label{eq: qq1}
\begin{aligned}
\left\| \sup_{\tau} \left\|\int_{0}^{[\tau]}e^{i(t-s)\Delta} \left(\dws e^{i(s-[s]\Delta)}v([s]) \right) {\rm d} s \right\|_{L_{x}^{10}} \right\|_{L_{\omega}^{\rho}}^{\rho}\\
\lesssim \left\| \sum_{j} \left\|  \int_{t_{j}}^{t_{j+1}} e^{i(t-s)\Delta} \left( \dws e^{i(s-[s])\Delta} v(s) \right) {\rm d}s \right\|^{2}_{L_{x}^{10}} \right\|_{L_{\omega}^{\rho/2}}^{\rho/2}\;. 
\end{aligned}
\end{equation}
Using the dispersive estimate \eqref{eq: dispersive} and arguing similarly as in \eqref{eq: qq}, we get
\begin{equation}\label{eq: qq2}
\left\| \int_{t_{j}}^{t_{j+1}} e^{i(t-s)\Delta} \left( \dws e^{i(s-[s])\Delta} v(s) \right) {\rm d}s \right\|^{2}_{L_{x}^{10}} \lesssim \frac{\|\Delta_j W\|_{L_{x}^{5/2}}}{t_{j+1}-t_{j}} \int_{t_{j}}^{t_{j+1}} (t-s)^{-2/5} {\rm d}s\;. 
\end{equation}
Plug \eqref{eq: qq2} into \eqref{eq: qq1}, we see that \eqref{eq: intermar} will follow if we can prove
\begin{equation}
\left\| \sum_{j} \left( \frac{\|\Delta_j W\|_{L_{x}^{5/2}}}{t_{j+1}-t_{j}} \int_{t_{j}}^{t_{j+1}} (t-s)^{-\frac{2}{5}} {\rm d}s \right)^{2} \right\|_{L_{\omega}^{\rho/2}}^{\rho/2} \lesssim 1.
\end{equation}
Indeed, using triangle inequality to insert $L_{\omega}^{\rho/2}$-norm inside the sum and then employing \eqref{eq: sb}, we have 
\begin{equation}\label{eq: pp4}
\begin{aligned}
&\phantom{111}\left\| \sum_{j} \left( \frac{\|\Delta_j W\|_{L_{x}^{5/2}}}{t_{j+1}-t_{j}} \int_{t_{j}}^{t_{j+1}}(t-s)^{-2/5} {\rm d}s \right)^{2} \right\|_{L_{\omega}^{\rho/2}}^{\rho/2}\\
&\lesssim_{\rho} \sum_{j}\frac{1}{t_{j+1}-t_{j}} \left( \int_{t_{j}}^{t_{j+1}} (t-s)^{-2/5} {\rm d}s \right)^{2}\\
&\lesssim \sum_{j=0}^{n-1} \int_{t_{j}}^{t_{j+1}}(t-s)^{-4/5} {\rm d}s = \int_{0}^{1}(t-s)^{-4/5} {\rm d}s \lesssim 1, 
\end{aligned}
\end{equation}
where we have used H\"older to create a whole power of $t_{j+1}-t_{j}$ in the third inequality. This completes the proof. 
\end{proof}

\begin{remark}\label{remark: extrasmall1}
In the last step of \eqref{eq: pp4}, we used $\int_{0}^{1}(t-s)^{-4/5} {\rm d}s \lesssim 1$. If we work on an small interval $[a,b]$ rather than $[0,1]$, we will gain a small power of form $(b-a)^{\gamma}$. In this situation we would have $\gamma=1/5$, but in general this term is subcritical and one can gain a positive power of $b-a$. 
\end{remark}

Now we turn to the control of $\sq$. 

\begin{proof}[Proof of \eqref{eq: p2es}]
Unlike \eqref{eq: p1es} which relies on the martingale structure, the proof of \eqref{eq: p2es} is more straight forward. Proceeding similarly as in \eqref{eq: qq}, for $t_{j} \leq r \leq s \leq t_{j+1}$, we have
\begin{equation}
\begin{aligned}
&\left\| e^{i(t-s)\Delta} \left[ \dws \cdot e^{i(s-r)\Delta} \left( \dws v(r) \right) \right] \right\|_{L_{x}^{10}}\\
\lesssim &(t-s)^{-\frac{2}{5}} \frac{\left\| \Delta_{j(s)} W \right\|_{L_{x}^{5/2}} \left\| \Delta_{j(s)} W \right\|_{L_{x}^{\infty}}}{\big( t_{j(s)+1} - t_{j(s)} \big)^{2}}\;.
\end{aligned}
\end{equation}
We thus get
\begin{equation}\label{eq: pp3}
\|\sq(t)\|_{L_{\omega}^{\rho}}
\lesssim \left\| \sum_{j} \frac{\|\Delta_j W\|_{L_x^{5/2}} \|\Delta_j W\|_{L_x^\infty}}{t_{j+1}-t_{j}} \int_{t_{j}}^{t_{j+1}} (t-s)^{-\frac{2}{5}} {\rm d}s \right\|_{L_{\omega}^{\rho}}  \lesssim_\rho 1.
\end{equation}
This completes the proof. 
\end{proof}

\begin{remark}\label{remark: exs2}
Similar as in Remark \ref{remark: extrasmall1}, one can gain a positive power of $b-a$ if one is working in a small interval $b-a$ instead of all of $[0,1]$. 
\end{remark}

\subsection{A crude upper bound -- controlling large oscillations}

We give a crude bound on $\|v\|_{\xX(t_j, t_{j+1})}$. This bound will used only when the size of $\Delta_j W$ is not small in that interval. 
 
\begin{lem}\label{le: lemlosc}
For every $0 \leq j \leq n-1$, we have
\begin{equation}
\|v\|_{\xX(t_{j},t_{j+1})} \lesssim 1 + \|\Delta_j W\|_{L_{x}^{\infty}}\;. 
\end{equation}
\end{lem}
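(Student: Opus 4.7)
The plan is to view \eqref{eq: un2} on the single interval $[t_j, t_{j+1}]$ as a deterministic mass-critical NLS with a linear perturbation and then apply Corollary~\ref{cor: stablebound} pathwise in $\omega$. Indeed, since $W^{(n)}$ is piecewise linear, on $[t_j, t_{j+1}]$ the equation for $v = u^{(n)}$ reads
\begin{equation*}
i \d_t v + \Delta v = |v|^4 v + e, \qquad e(s,x) := v(s,x)\cdot \frac{\Delta_j W(x)}{t_{j+1}-t_j},
\end{equation*}
so $v$ solves a deterministic mass-critical NLS with forcing $e$.

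Next I would verify the two hypotheses of Corollary~\ref{cor: stablebound} on the interval $[t_j, t_{j+1}]$. For the a priori $L_t^\infty L_x^2$ bound, pathwise mass conservation \eqref{eq: massapriori} gives $\|v\|_{L_t^\infty L_x^2(t_j,t_{j+1})} \leq \Lambda_{\ini} \lesssim 1$, so we may take $M \lesssim 1$. For the source, since the factor $\frac{\|\Delta_j W\|_{L_x^\infty}}{t_{j+1}-t_j}$ is constant in $s$, a direct H\"older estimate yields
\begin{equation*}
\|e\|_{L_t^1 L_x^2(t_j,t_{j+1})} \leq \frac{\|\Delta_j W\|_{L_x^\infty}}{t_{j+1}-t_j}\int_{t_j}^{t_{j+1}} \|v(s)\|_{L_x^2}\, ds \leq \Lambda_{\ini}\,\|\Delta_j W\|_{L_x^\infty} \lesssim \|\Delta_j W\|_{L_x^\infty},
\end{equation*}
so we may take $E \lesssim \|\Delta_j W\|_{L_x^\infty}$.

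Applying Corollary~\ref{cor: stablebound} (valid for any interval, as emphasized before Proposition~\ref{prop: stable}) on each realization of $\omega$ then gives $\|v\|_{\xX(t_j,t_{j+1})} \lesssim 1 + \|\Delta_j W\|_{L_x^\infty}$, which is exactly the claim. There is no real obstacle here: the point is that this bound is deterministic and wastes any cancellation between $\Delta_j W$ and $v$, so it is useful only in the (rare) event that $\|\Delta_j W\|_{L_x^\infty}$ fails to be small. Sharper estimates exploiting the martingale and quadratic-variation structure, already established in Lemma~\ref{le:source_max}, will be used on the remaining intervals.
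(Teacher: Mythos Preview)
Your proof is correct and follows exactly the paper's approach: bound the forcing $e = v\cdot\frac{\Delta_j W}{t_{j+1}-t_j}$ in $L_t^1 L_x^2$ by $\|\Delta_j W\|_{L_x^\infty}$ using the mass conservation \eqref{eq: massapriori}, and then apply Corollary~\ref{cor: stablebound}. The paper's proof is simply a more terse version of what you wrote.
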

\begin{proof}
Note that
\begin{equation}
\left\| v \cdot \frac{\Delta_j W}{t_{j+1}-t_{j}} \right\|_{L_{t}^{1}L_{x}^{2}} \lesssim \|\Delta_j W\|_{L_{x}^{\infty}}.
\end{equation}
The claim then follows from \eqref{eq: massapriori} and Corollary~\ref{cor: stablebound}.
\end{proof}

\begin{remark}
Although Lemma \ref{le: lemlosc} is true for all $j$ and all realizations of $W$, we will use it only for the rare event when $\|W(t_{j+1})-W_{t_{j}}\|_{L_x^\infty}$ has size of at least order $1$. 
\end{remark}

\begin{remark}\label{rem: lpnot}
We discuss how one can use $L_x^p$-norm of $\Delta_j W$ for $p<+\infty$, and not assuming $L_x^\infty$ norm. To do this, we can replace the $L_{t}^{1}L_{x}^{2}$ norm by any $L_{t}^{q'}L_{x}^{r'}$ in Propositions~\ref{prop: stable}, \ref{prop: stablework} and Corollary~\ref{cor: stablebound}. For example, if one uses the pair $(5,10)$, then the same argument gives
\begin{equation}
\|v\|_{\xX(t_{j},t_{j+1})} \lesssim 1 + \frac{\| \Delta_j W \|_{L_{x}^{5/2}}}{(t_{j+1}-t_{j})^{1/4}}.
\end{equation}
which is still good for us since $\Delta_j W$ has typical size $|t_{j+1}-t_{j}|^{1/2}$.
\end{remark}

\subsection{Deriving the desired bound}

We are ready to prove the desire bound for $v$, that is, 
\begin{equation}\label{eq: desired}
\|v\|_{L_{\omega}^{\rho}\xX_{2}(0,1)} \lesssim_{\rho} 1
\end{equation}
The $\xX_{1}$ part is automatic since we have mass conservation law \eqref{eq: massapriori}. We fix a small constant $\eta>0$ whose value (independent of $n$) will be specified later. 

Recall that the partition $0=t_{0}<t_{1}<...t_{n}=1$ is fixed throughout. For every $\omega$, we partition the interval $[0,1]$ into a union of almost disjoint subintervals (only their boundary points can intersect), and divide those subintervals into two types as follows. 

First, type-A intervals are selected from the original partition $\pi^{(n)}$. An interval $[t_j, t_{j+1}]$ is called a \textit{type-A interval} if $\|\Delta_j W\|_{L_x^\infty} \geq \eta$. The remaining set $[0,1] \setminus \cup_{\text{type A}} (t_j, t_{j+1})$ (where we only take the interior of type-A intervals here) is a union of disjoint closed intervals, each of the form $[t_{j}, t_{j'}]$ where $t_j$ and $t_{j'}$ are end and beginning points of two consecutive type-A intervals. We now further partition the remaining set into type B intervals $\{[a_l, b_l]\}_l$ as follows. For any such (maximal) interval $[t_j, t_{j'}]$ in the remaining set, let $a_1 = t_j$. Suppose $[a_l, b_l] \subset [t_{j}, t_{j'}]$ is defined, let $a_{l+1} := b_l$, and define
\begin{equation}
	b_{l+1} := t_{j'} \wedge \inf \Big\{\tau \geq a_{l+1}: \|\sS_{\mar}^{*}\|_{L_t^5 (a_{l+1},\tau)} + \|\sS_{\qua}^{*}\|_{L_t^5(a_{l+1}, \tau)} = \eta \Big\}\;.
\end{equation}
This procedure necessarily ends with a finite $l$. We repeat this procedure for every maximal closed interval $[t_j, t_{j'}] \in [0,1] \setminus \cup_{type A} [t_j, t_{j+1}]$. In this way, we have partitioned the remaining set into a disjoint union of intervals $[a_l, b_l]$ (only boundary points can overlap) such that
\begin{equation}
	[0,1] \setminus \bigcup_{\text{type A}} (t_{j}, t_{j+1}) = \cup_{l} [a_l, b_l]\;, \qquad (a_l, b_l) \cap (a_{l'}, b_{l'}) = \emptyset \; \; \text{if} \; \; l \neq l'\;.
\end{equation}
Here, we have abused the index $l$ by relabelling all such intervals. By construction, one also has the estimate
\begin{equation} \label{eq: smallness}
	\|\sS_{\mar}^{*}\|_{L_t^5(a_l, b_l)} + \|\sS_{\qua}^{*}\|_{L_t^5(a_l, b_l)} \leq \eta\;
\end{equation}
for every $l$. For convenience, we further separate type-B intervals into subtypes B-I and B-II as follows: 
\begin{itemize}
\item We call $[a_l, b_l]$ type B-I if
\begin{equation}\label{eq: nottoosmall}
\|\sq\|_{L_{t}^{5}(a_{l},b_{l})}+\|\sm\|_{L_{t}^{5}(a_{l},b_{l})} \geq \frac{\eta}{2}\;.
\end{equation}
\item All the remaining type-B intervals are of type B-II. 
\end{itemize}
Thus, we have defined a random variable $\omega \mapsto J(\omega)$ where $J=J(\omega)$ is the number of type-B intervals. 

We now state two lemmas to summarize the properties of those partitions. First,we claim $J$ can not be too large in average sense. 
\begin{lem}\label{lem: finitenumber}
\begin{equation}
\|J^{1/5}\|_{L_{\omega}^{\rho}}\lesssim_{\rho,\eta} 1
\end{equation}
\end{lem}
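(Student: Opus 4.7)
The plan is to split $J = N_{B\text{-}I} + N_{B\text{-}II}$, where $N_{B\text{-}I}$ and $N_{B\text{-}II}$ count the numbers of type B-I and type B-II intervals respectively, and bound each piece in $L_{\omega}^{\rho/5}$. Since $J^{1/5} \lesssim N_{B\text{-}I}^{1/5} + N_{B\text{-}II}^{1/5}$, it suffices to show $\|N_{B\text{-}I}\|_{L_\omega^{\rho/5}} \lesssim_{\rho, \eta} 1$ and $\|N_{B\text{-}II}\|_{L_\omega^{\rho/5}} \lesssim_{\rho, \eta} 1$.

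For $N_{B\text{-}I}$, I would use the fact that the type-B intervals are pairwise almost disjoint, so that by the defining property \eqref{eq: nottoosmall} of type B-I,
\begin{equation*}
N_{B\text{-}I}\cdot \Big(\frac{\eta}{2}\Big)^5 \leq \sum_{l \in B\text{-}I} \Big(\|\sS^*_{\mar}\|_{L_t^5(a_l,b_l)}^5 + \|\sS^*_{\qua}\|_{L_t^5(a_l,b_l)}^5\Big) \leq \|\sS^*_{\mar}\|_{L_t^5(0,1)}^5 + \|\sS^*_{\qua}\|_{L_t^5(0,1)}^5.
\end{equation*}
Taking the $\tfrac{1}{5}$-th power and then $L_\omega^{\rho}$, Lemma~\ref{le:source_max} gives $\|N_{B\text{-}I}^{1/5}\|_{L_\omega^\rho} \lesssim_{\rho,\eta} 1$.

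For $N_{B\text{-}II}$, the key observation is that within each maximal closed interval of $[0,1]\setminus \bigcup_{\text{type A}}(t_j, t_{j+1})$, the construction produces a chain of subintervals each of which (except possibly the last) stops precisely when the norm reaches $\eta$, and hence qualifies as type B-I. Therefore at most one type B-II interval is produced per maximal complement interval, and since the maximal complement intervals are separated by type-A intervals, $N_{B\text{-}II} \leq N_A + 1$, where $N_A = \#\{j : \|\Delta_j W\|_{L_x^\infty}\geq \eta\}$. Thus it remains to control $\|N_A\|_{L_\omega^{\rho/5}}$. For this I would write $N_A = \sum_j \mathbf{1}\{\|\Delta_j W\|_{L_x^\infty}\geq \eta\}$, apply Minkowski and Markov's inequality to obtain, for any $p\geq 1$,
\begin{equation*}
\|N_A\|_{L_\omega^{\rho/5}} \leq \sum_j \Pr\big(\|\Delta_j W\|_{L_x^\infty}\geq \eta\big)^{5/\rho} \lesssim \eta^{-5p/\rho} \sum_j \|\Delta_j W\|_{L_\omega^p L_x^\infty}^{5p/\rho},
\end{equation*}
and then invoke \eqref{eq: sb} to get $\|\Delta_j W\|_{L_\omega^p L_x^\infty}^{5p/\rho} \lesssim (t_{j+1}-t_j)^{5p/(2\rho)}$. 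Choosing $p = 2\rho/5$ (valid since it can be taken $\geq 1$ for $\rho$ large, which is all that matters) makes the exponent equal to one, and the sum telescopes to unity. Hence $\|N_A\|_{L_\omega^{\rho/5}}\lesssim_{\rho,\eta} 1$, yielding the desired bound on $N_{B\text{-}II}$.

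The only mild obstacle is the bookkeeping in step two, namely verifying that the recursive construction indeed gives at most one type B-II interval per maximal complement interval and that boundary points cause no double-counting; once this combinatorial point is clear, the rest is a direct application of Lemma~\ref{le:source_max} together with elementary moment estimates on the Brownian increments.
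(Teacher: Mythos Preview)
Your proof is correct and follows essentially the same approach as the paper: split $J$ into type B-I and type B-II counts, control the former via Lemma~\ref{le:source_max} and the disjointness of the intervals, and bound the latter by the number of type-A intervals. The only cosmetic differences are that the paper uses the Gaussian tail bound $\Pr(\|\Delta_j W\|_{L_x^\infty}\geq\eta)\lesssim_\eta |t_{j+1}-t_j|^2$ instead of your Markov argument with \eqref{eq: sb}, and that you are slightly more careful with the ``$+1$'' in $N_{B\text{-}II}\leq N_A+1$ (the paper writes $J_2\leq N_A$, which is harmless after taking moments).
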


Second, when the $\eta$ is chosen small enough, (such smallness only depends on $\Lambda_{ini}$ and the fact $\|\pi\|$ is small enough), we have
\begin{lem}\label{lem: smalldynamic}
If in some interval $[a_{l},b_{l}]$ so that \eqref{eq: smallness} holds, then we have the (deterministic) estimate
\begin{equation}
\|v\|_{\xX_{2}(a_{l},b_{l})} \lesssim 1\;. 
\end{equation}
\end{lem}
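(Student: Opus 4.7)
The plan is to treat $v$ on $[a_l, b_l]$ as a perturbation of the deterministic mass-critical NLS and to close a bootstrap via Proposition~\ref{prop: snlsstable}.

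I would begin from the Duhamel expansion \eqref{eq: workingdu} with base point $a = a_l$, which rewrites
\[
v(t) = e^{i(t-a_l)\Delta} v(a_l) - i \int_{a_l}^{t} e^{i(t-s)\Delta} \nN(v(s))\, {\rm d}s + g(t)\;,
\]
where the perturbation is
\[
g(t) := \ssm(a_l, t) + \ssq(a_l, t) - \int_{a_l}^{t} e^{i(t-s)\Delta} \widetilde{N}(v; s)\, {\rm d}s\;, \qquad g(a_l) = 0\;,
\]
and $\|v(a_l)\|_{L_x^2} \leq \Lambda_\ini =: M$ by mass conservation \eqref{eq: massapriori}. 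This puts $v$ precisely in the form required by Proposition~\ref{prop: snlsstable}.

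The next step is to bound $\|g\|_{L_t^5 L_x^{10}(a_l, T)}$ for $T \in [a_l, b_l]$. The source contributions are controlled directly by \eqref{eq: estimatestragiht} together with the smallness assumption \eqref{eq: smallness}:
\[
\|\ssm(a_l, \cdot)\|_{L_t^5 L_x^{10}(a_l, T)} + \|\ssq(a_l, \cdot)\|_{L_t^5 L_x^{10}(a_l, T)} \lesssim \|\sm\|_{L_t^5(a_l, b_l)} + \|\sq\|_{L_t^5(a_l, b_l)} \leq \eta\;.
\]
For the remaining piece, Strichartz \eqref{eq: stri} with the dual pair $(\infty, 2)$ gives
\[
\Big\| \int_{a_l}^{t} e^{i(t-s)\Delta} \widetilde{N}(v; s)\, {\rm d}s \Big\|_{L_t^5 L_x^{10}(a_l, T)} \lesssim \|\widetilde{N}(v; \cdot)\|_{L_t^1 L_x^2(a_l, T)}\;.
\]
Since $[a_l, b_l]$ lies in the complement of the type-A intervals, every partition interval $[t_{j(s)}, t_{j(s)+1}]$ meeting $[a_l, b_l]$ satisfies $\|\Delta_{j(s)} W\|_{L_x^\infty} < \eta$. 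Combined with $\|\nN(v)\|_{L_x^2} = \|v\|_{L_x^{10}}^5$ and a short Fubini/H\"older step on each partition interval, this yields
\[
\|\widetilde{N}(v; \cdot)\|_{L_t^1 L_x^2(a_l, T)} \lesssim \eta\, \|v\|_{\xX_2(a_l, T)}^5 + \eta\;,
\]
where the additive $\eta$ absorbs the contribution from the at most two partition intervals straddling $a_l$ or $b_l$; on these Lemma~\ref{le: lemlosc} (using $\|\Delta_j W\|_{L_x^\infty} < \eta \lesssim 1$) gives a pathwise $O(1)$ bound for $\|v\|_{\xX_2}$.

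To finish, I would close a bootstrap in $K(T) := \|v\|_{\xX_2(a_l, T)}$. Let $B_M, \eta_M$ be the constants supplied by Proposition~\ref{prop: snlsstable}, and set $T^* := \sup\{T \in [a_l, b_l] : K(T) \leq 2 B_M\}$; continuity of $T \mapsto K(T)$ and $K(a_l) = 0$ make $T^*$ well-defined and positive. On $[a_l, T^*]$ the preceding estimates give $\|g\|_{L_t^5 L_x^{10}} \leq C \eta\, (1 + (2 B_M)^5)$, and fixing $\eta$ small enough in terms of $\Lambda_\ini$ alone forces this to be $\leq \eta_M$. Proposition~\ref{prop: snlsstable} then upgrades $K(T^*) \leq B_M < 2 B_M$, so continuity excludes $T^* < b_l$ and yields $\|v\|_{\xX_2(a_l, b_l)} \leq B_M \lesssim 1$. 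The step I expect to require the most care is the boundary bookkeeping for $\widetilde{N}(v; s)$: since $a_l, b_l$ need not belong to $\pi^{(n)}$, the inner integral defining $\widetilde{N}$ can reach slightly outside $[a_l, b_l]$, which is exactly why invoking Lemma~\ref{le: lemlosc} on the two boundary partition intervals is necessary, rather than feeding the bootstrap back through them.
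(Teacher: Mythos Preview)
Your proposal is correct and follows essentially the same route as the paper: split $g$ into the source terms $\ssm+\ssq$ (controlled by \eqref{eq: smallness}) and the $\widetilde{N}$ contribution (controlled via Strichartz, the type-B bound $\|\Delta_j W\|_{L_x^\infty}<\eta$, and the bootstrap hypothesis), then close with Proposition~\ref{prop: snlsstable}. The only cosmetic difference is that the paper first reduces to the case $a_l,b_l\in\pi^{(n)}$ by peeling off the boundary partition intervals via Lemma~\ref{le: lemlosc} before running the bootstrap, whereas you carry those boundary intervals through as an additive $O(\eta)$ term inside the estimate; these are equivalent bookkeeping choices.
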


Assuming Lemma \ref{lem: finitenumber}, \ref{lem: smalldynamic} at the moment, we conclude the proof of \eqref{eq: desired}.
\begin{proof}[Proof of \eqref{eq: desired} assuming Lemmas~\ref{lem: finitenumber} and~\ref{lem: smalldynamic}]
If some interval is of type-A, we apply Lemma \ref{le: lemlosc}.
All the intervals of type-B are partitioned into sub-intervals $[a_{l},b_{l}]$ in which \eqref{eq: smallness} hold, and we apply Lemma \ref{lem: smalldynamic}. To summarize, we derive
\begin{equation}\label{eq: conclude}
\begin{aligned}
\|v\|_{\xX_{2}[0,1]}\leq &\|v\|_{L_{t}^{5}L_{x}^{10}(\cup_{l}[a_{l},b_{l}])} + \sum_{[t_{j},t_{j+1}] \text{ type-A}} \|v\|_{L_{t}^{5}L_{x}^{10}(t_{j},t_{j+1})}\\
\lesssim & J^{\frac{1}{5}} + \sum_{j} \1_{\{\|\Delta_j W\|_{L_{x}^{\infty}} \geq \eta\}} (\omega) \left( 1 + \|\Delta_j W\|_{L_{x}^{\infty}} \right)\;.
\end{aligned}
\end{equation}
Thus, we derive 
\begin{equation}
\|v\|_{L_{\omega}^{\rho}\xX_{2}(0,1)} \lesssim_\rho \| J^{\frac{1}{5}} \|_{L_{\omega}^{\rho}} + \sum_{j} \left\| \1_{\{\|\Delta_j W\|_{L_{x}^{\infty}} \geq \eta\}} (\omega) \left( 1 + \|\Delta_j W\|_{L_{x}^{\infty}} \right) \right\|_{L_{\omega}^{\rho}}\;. 
\end{equation}
The first term is controlled by Lemma \ref{lem: finitenumber}. For the second one, using the fact that for each $j$, we have
\begin{equation}
\mathbb{P}(\|W(t_{j+1})-W(t_{j})\|_{L_{x}^{\infty}} \sim \alpha)\lesssim   e^{-c|\frac{\alpha}{t_{j+1}-t_{j}}|^{2}},
\end{equation}
where $c$ depends on $\Lambda_\ini$ only. Thus, we derive
\begin{equation}
\begin{aligned}
&\sum_{j} \left\| \chi_{\{\|\Delta_j W\|_{L_{x}^{\infty}} \geq \eta\}} (\omega) \left( 1 + \|\Delta_j W\|_{L_{x}^{\infty}} \right) \right\|_{L_{\omega}^{\rho}}\\
\lesssim_{\rho,\eta} &\sum_{j}[e^{-\frac{c}{2}|\frac{\eta}{t_{j+1}-t_{j}}|^{2}}]^{1/\rho} \lesssim_{\rho,\eta} \sum_{j}|t_{j+1}-t_{j}|^{2} \lesssim \|\pi^{(n)}\| \lesssim 1.
\end{aligned}
\end{equation}
Note that since $\eta > 0$ is fixed and independent of $n$ (but depending on $\Lambda_\ini$ and $\Lambda_\noi$ only), we can thus conclude the proof. 
\end{proof}

We are left with the proofs of Lemmas~\ref{lem: finitenumber} and~\ref{lem: smalldynamic}. We first prove Lemma~\ref{lem: finitenumber}.

\begin{proof}[Proof of Lemma \ref{lem: finitenumber}]
Let $J_{1}$ be the number of intervals of Type B-I, and $J_{2}$ be the number of intervals of Type B-II. And  $J=J_{1}+J_{2}$.
Note that
\begin{equation}
 \sum_{
 (a_{l},b_{l}) \text{ Type B-I}} \Big( \|\sq\|_{L_{t}^{5}(a_{l},b_{l})}^{5}+\|\sm\|_{L_t^5(a_l,b_l)}^{5} \Big) \sim_{\eta}J_{1}
\end{equation}
Applying Lemma~\ref{le:source_max}, the desired estimate follows for $J_{1}$.

On the other hand, we have
\begin{equation}
J_{2} \leq \# \; \text{of type A intervals}\;.
\end{equation}
Taking $L^{\rho}$ on both side, and recalling that the probability of $[t_{j},t_{j+1}]$ being Type A is $\lesssim_{\eta} |t_{j+1}-t_{j}|^{2}$, the desired estimate for $J_{2}$ follows. We can then conclude Lemma~\ref{lem: finitenumber}. 
\end{proof}

Finally, we present the proof of Lemma~\ref{lem: smalldynamic}. Implicitly, part of the  proof is in the same spirit of so-called Da Prato-Debussche trick.

\begin{proof} [Proof of Lemma~\ref{lem: smalldynamic}]
	
Let us fix $l$ and denote $[a_{l}, b_{l}]$ by $[a,b]$. We assume without loss of generality that $a$ is a point in the partition $\pi^{(n)}$ so that $[a]=a$. If not, then by our definition of $[a_{l},b_{l}]$, the interval $[t_{j(a)},t_{j(a)+1}]$ is not of type-A, and we can apply Lemma~\ref{le: lemlosc} to control the dynamics of $v$ in $[t_{j(a)},t_{j(a)+1}]$, and we only need to study the dynamics of $v$ in $[t_{j(a)+1},b]$, which satisfies all the assumption for $[a_{l}, b_{l}]$, which we will use below. Similarly, we also assume $b=[b]$.

Recall that $v$ satisfies \eqref{eq: workingdu} for $t \in [a,b]$. Let
\begin{equation}
\begin{aligned}
h_{1}(t):=&- \int_{a}^{t} e^{i(t-s)\Delta} \sn {\rm d} s\\
=&-\int_{a}^{t} e^{i(t-s)\Delta} \left(\dws \int_{[s]}^{s}e^{i(s-r)\Delta}\nN(v(r))dr\right) {\rm d}s\;, 
\end{aligned}
\end{equation}
and $h_{2}(t):=\sS_{\mar}(a,t) + \sS_{\qua}(a,t)$. By the choice of $[a,b]$ and \eqref{eq: smallness}, we observe that
\begin{equation}
h_{2}(a )= h_{1}(a) = 0 \quad \text{and} \quad \|h_{2}(a)\|_{L_{t}^{5}L_{x}^{10}(a,b)} \lesssim \eta.
\end{equation}
If there were no $h_{1}$ term, then Lemma~\ref{lem: smalldynamic} follows from the modified stability Proposition~\ref{prop: snlsstable}.

In our current situation, we treat $h_{1}$ perturbatively. To do this, we will follow a bootstrap scheme. Let $M$ in Proposition~\ref{prop: snlsstable} be the same as $\Lambda_\ini$, hence the corresponding $B_M$ and $\eta_M$ in Proposition~\ref{prop: snlsstable} depends on $\Lambda_\ini$ only. We first assume the following bootstrap lemma. 

\begin{lem} \label{lem: bootstrap}
For sufficiently small $\eta$, if for $[a,T] \subset [a,b]$, one has the bootstrap assumption
\begin{equation}\label{eq: bootstrapas}
\|v\|_{\xX_2(a,T)} \leq 2B_{M}\;, 
\end{equation}
then one has the bootstrap estimate
\begin{equation}
\|v\|_{\xX_2(a,T)} \leq B_{M}\;. 
\end{equation}
\end{lem}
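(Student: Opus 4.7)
The plan is to cast the Duhamel expansion \eqref{eq: workingdu} of $v$ on $[a,T]$ into the framework of Proposition~\ref{prop: snlsstable}. Writing
\begin{equation*}
v(t) = e^{i(t-a)\Delta} v(a) - i\int_a^t e^{i(t-s)\Delta} \nN(v(s))\, {\rm d}s + g(t),\qquad g := h_1+h_2,
\end{equation*}
we have $g(a)=0$, and the pathwise mass bound \eqref{eq: massapriori} gives $\|v(a)\|_{L_x^2}\leq \Lambda_\ini = M$. It therefore suffices, under the bootstrap hypothesis \eqref{eq: bootstrapas}, to show that $\|g\|_{L_t^5 L_x^{10}(a,T)} \leq \eta_M$ for a choice of $\eta$ depending only on $\Lambda_\ini$; Proposition~\ref{prop: snlsstable} then returns $\|v\|_{\xX_2(a,T)}\leq B_M$, which is exactly the desired bootstrap estimate.

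The term $h_2$ is essentially for free: by \eqref{eq: estimatestragiht} and the defining property \eqref{eq: smallness} of a type-B interval,
\begin{equation*}
\|h_2\|_{L_t^5 L_x^{10}(a,T)} \leq \|\sm\|_{L_t^5(a,b)} + \|\sq\|_{L_t^5(a,b)} \leq \eta.
\end{equation*}

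For $h_1$ the crucial observation is that the interval $[a,b]$ contains no type-A sub-interval, so the \emph{deterministic} bound $\|\Delta_{j(s)} W\|_{L_x^\infty} \leq \eta$ holds for a.e.\ $s\in[a,b]$. Applying the Strichartz estimate \eqref{eq: stri} with output pair $(5,10)$ and dual source pair $(1,2)$,
\begin{equation*}
\|h_1\|_{L_t^5 L_x^{10}(a,T)} \lesssim \left\| \dws\int_{[s]}^s e^{i(s-r)\Delta}\nN(v(r))\,{\rm d}r\right\|_{L_s^1 L_x^2(a,T)}.
\end{equation*}
Pulling the $L_x^\infty$ factor out via H\"older $\|fg\|_{L_x^2}\leq\|f\|_{L_x^\infty}\|g\|_{L_x^2}$, using unitarity of $e^{i(s-r)\Delta}$ on $L_x^2$ for the inner integral, and invoking the pointwise bound $\|\Delta_{j(s)} W\|_{L_x^\infty}\leq \eta$,
\begin{equation*}
\|h_1\|_{L_t^5 L_x^{10}(a,T)} \lesssim \eta \int_a^T \frac{1}{t_{j(s)+1}-t_{j(s)}} \int_{[s]}^s \|v(r)\|_{L_x^{10}}^5\,{\rm d}r\,{\rm d}s.
\end{equation*}
A Fubini swap on each partition cell gives $\int_{t_j}^{t_{j+1}}\tfrac{1}{t_{j+1}-t_j}\int_{t_j}^{s}\|v(r)\|_{L_x^{10}}^5\,{\rm d}r\,{\rm d}s \leq \int_{t_j}^{t_{j+1}}\|v(r)\|_{L_x^{10}}^5\,{\rm d}r$; summing in $j$ and invoking \eqref{eq: bootstrapas} yields $\|h_1\|_{L_t^5 L_x^{10}(a,T)}\lesssim \eta\,\|v\|_{\xX_2(a,T)}^5 \leq \eta(2B_M)^5$.

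Combining, $\|g\|_{L_t^5 L_x^{10}(a,T)} \lesssim \eta(1+(2B_M)^5)$. Since $B_M$ and $\eta_M$ depend only on $M = \Lambda_\ini$, fixing $\eta$ small enough (depending only on $\Lambda_\ini$) forces the right-hand side to be $\leq \eta_M$, and Proposition~\ref{prop: snlsstable} closes the bootstrap. The main subtlety, to my mind, is precisely the extraction of a small \emph{deterministic} prefactor $\eta$ from the a priori only stochastically controlled ratio $\|\Delta_{j(s)} W\|/(t_{j(s)+1}-t_{j(s)})$ appearing inside $\sn$: this is exactly what the type-A vs.\ type-B dichotomy, together with the pointwise $L_x^\infty$ smallness on type-B sub-intervals, is engineered to supply.
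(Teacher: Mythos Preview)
Your proof is correct and follows essentially the same route as the paper: both control $h_2$ by the type-B smallness \eqref{eq: smallness}, use the pointwise bound $\|\Delta_{j(s)}W\|_{L_x^\infty}<\eta$ on type-B cells together with Strichartz and the bootstrap hypothesis to get $\|h_1\|_{L_t^5 L_x^{10}(a,T)}\lesssim \eta\,\|v\|_{\xX_2(a,T)}^5\leq \eta(2B_M)^5$, and then feed $g=h_1+h_2$ into Proposition~\ref{prop: snlsstable}. Your Fubini step is just a rephrasing of the paper's summation $\sum_j \eta\,\|v\|_{\xX_2(t_j,t_{j+1})}^5$. One minor point: Proposition~\ref{prop: snlsstable} asks for $\|v\|_{L_t^\infty L_x^2(a,T)}\leq M$ rather than merely $\|v(a)\|_{L_x^2}\leq M$, but this is exactly what mass conservation \eqref{eq: massapriori} gives.
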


Lemma~\ref{lem: smalldynamic} follows from Lemma~\ref{lem: bootstrap} by standard continuity argument. 
\end{proof}

It then remains to prove Lemma~\ref{lem: bootstrap}. 

\begin{proof} [Proof of Lemma~\ref{lem: bootstrap}]
	
We first choose $\eta$ small enough such that $\|h_{2}(a)\|_{\xX_2(a,b)} \leq \eta_{M}/10$ (we will make $\eta$ even smaller if necessary). Lemma~\ref{lem: bootstrap} follows from Proposition~\ref{prop: snlsstable} if we can show that \eqref{eq: bootstrapas} implies 
\begin{equation}\label{eq: bootstrapsmallness}
\|h_{1}\|_{\xX_2(a,b)}\leq \eta_{M}/10.
\end{equation}
By Strichartz estimate \eqref{eq: stri}, we have
\begin{equation}\label{eq: redobystri}
\|h_{1}\|_{\xX_2(a,t)} \lesssim \left\| \dws \int_{[s]}^{s} e^{i(s-r)\Delta} \nN\big(v(r)\big) {\rm d}r \right\|_{L_{s}^{1}L_{x}^{2}(a,t)}\;. 
\end{equation}
Note that since we assume $a=[a]$ and $b=[b]$, the $[t_{j(s)},t_{j(s)}+1]$ in the integrand is always in $[a,b]$, also note that  $[t_{j(s)},t_{j(s)}+1]$ must be of Type-B.

Note that we have the pointwise estimate
\begin{equation}\label{eq: pe}
\begin{aligned}
&\phantom{11} \left\| \dws \int_{[s]}^{s} e^{i(s-r)\Delta} \nN\big(v(r) \big) {\rm d}r \right\|_{L_{x}^{2}}\\
&\lesssim \frac{\|\Delta_{j(s)} W\|_{L_x^{\infty}}}{t_{j(s)+1}-t_{j(s)}} \cdot \|\nN(v)\|_{L_{t}^{1}L_{x}^{2}([s],s)} \lesssim \frac{\eta}{t_{j(s)+1}-t_{j(s)}} \|v\|_{\xX_2(t_{j(s)},t_{j(s)+1})}^{5}.
\end{aligned}
\end{equation}
Hence, by \eqref{eq: redobystri} and \eqref{eq: pe}, we conclude that
\begin{equation}
\|h_{1}(t)\|_{L_{t}^{1}L_{x}^{2}}\lesssim \sum_{[t_{j},t_{j+1}]\subset [a,b]}\eta \|v\|_{L_{t}^{5}L_{x}^{10}[t_{j},t_{j}+1]}^{5}\lesssim \eta B_{M}^{5}
\end{equation}
We have \eqref{eq: bootstrapsmallness} when $\eta$ is small enough. This completes the proof. 
\end{proof}

\begin{remark} \label{rem:Lp}
Before we end this subsection, we give a brief sketch on how one can modify the above arguments so that one does not need closeness in $L_x^\infty$ to prove stability. As already mentioned in Remark \ref{rem: lpnot}, one can replace the $\|\Delta_j W\|_{L_{x}^{\infty}}$ in Lemma~\ref{le: lemlosc} by $\frac{\|\Delta_j W\|_{L_x^{5/2}}}{(t_{j+1} - t_j)^{1/4}}$. Later, one may define an interval to be of type A if $\frac{\|\Delta_j W\|_{L_x^{5/2}}}{(t_{j+1} - t_j)^{1/4}} \geq \eta$. And finally, in the proof of Lemma~\ref{lem: bootstrap}, one replaces the $L_{t}^{1}L_{x}^{2}$-norm by $L_{t}^{5/4}L_{x}^{10/9}$, which is the dual of the Strichartz pair $(5,10)$. 
\end{remark}

\section{Proof of Corollary~\ref{cor: stab}} \label{sec:cor_stab}

\subsection{Overview of the proof}

We first point out, it is very natural that if one can prove Theorem \ref{th:unm_uniform_bd_stable}, then one can prove a stability result as in Corollary \ref{cor: stab}. It may be of some concern since in Corollary~\ref{cor: stab}, we only assume closeness between $V_{k}$ and $\widetilde{V}_{k}$ in $L_{x}^{p}$ for $p<\infty$ (and an a priori control of $L^{\infty}$), while the proof of Theorem \ref{th:unm_uniform_bd_stable} does use $L_{x}^{\infty}$-norm in several places. However, we use $L^{\infty}_{x}$-norm of $W$ only for the convenience of presentation, and as explained in Remark~\ref{rem:Lp}, one only needs a little modification of the arguments to turn the assumption from $L_x^\infty$ into $L_x^p$. The only place not covered by that remark is in \eqref{eq: pp3}, where $\|\Delta_j W\|_{L_x^\infty}$ also appear. But since the quantity $\|\Delta_{j}W\|_{L_{x}^{\infty}}$ in that expression was multiplied by $\|\Delta_{j}W\|_{L_{x}^{5/2}}$, and since Corollary~\ref{cor: stab} assumes closeness in $L_x^p$ but boundedness in $L_x^\infty$, this is already good enough for stability arguments to go through. We emphasize here again that the assumption in Corollary~\ref{cor: stab} includes a priori bound in $L_x^\infty$, but closeness in $L_x^p$ for $p<+\infty$. 

Due to the above discussion and for simplicity and conciseness of numeric, we will only present the proof of Corollary \ref{cor: stab} replacing \eqref{eq: closed} by the following stronger assumption
\begin{equation}\label{eq: enhanced}
\|f - \widetilde{f}\|_{L_{\omega}^{\rho}L_{x}^{2}} + \sum_{k \in \N} \|V_k - \widetilde{V}_k\|_{L^{1}_{x}\cap L_{x}^{\infty}}<\delta.
\end{equation}
Fix $\eps$ and $\rho$, and  arguing similarly as Proposition \ref{pr:unm_to_un}, one can show that
\begin{equation}\label{eq: mgood}
\sup_n \|\un_{m}-\un\|_{L_{\omega}^{\rho}\xX([0,1])}\leq \eps/10
\end{equation}
for all sufficiently large $m$. Thus, we only need to prove Corollary~\ref{cor: stab} for any fixed $m$ (and also allowing dependence on $m$). Also recall the bound in Theorem~\ref{th:unm_uniform_bd_stable} is uniform in both $m$ and $n$.

We also note we can further freely assume $\|\pi^{{n}}\|$ is small enough and such smallness can depend on this $m$.
Indeed, fix $\eps$, we can choose $m$ so that \eqref{eq: mgood} holds.  For  any given number $c_{m}$, we can reduce the proof of Corollary \ref{cor: stab} into the case $\|\pi^{{n}}\|\geq c_{m}$ and $\pi^{(n)}\leq c$. If $\|\pi^{{n}}\|\geq c_{m}$. We directly prove Corollary \ref{cor: stab} without reducing the $\un_{m}$. And what we are left is the case for stability of $\un_m$ with $\pi^{(n)}\leq c$.

We may only consider $m$ large, whose value may depend on $\eps$. Again for notational simplicity, we fix $\eps=\eps_{0}$ and $\rho$. We fix $n$ and $m$, and denote $\un_{m}$ by $v$, $\widetilde{u}^{(n)}_{m}$ by $\widetilde{v}$, and denote $t_{j}^{(n)}$ by $t_{j}$ as usual. Let
\begin{equation}
w := v - \widetilde{v} \quad \text{and} \quad U:= W - \widetilde{W}\;. 
\end{equation}
We will need a small constant $\eta$ similarly as in the previous section. Also recall $\Lambda_{ini}, \Lambda_{noi}$ are fixed throughout this article. Note that since $m$ is fixed, the nonlinearity essentially behaves linearly. 

We finally recall, since we a priori have $L_{\omega}^{\tilde{\rho}}\xX$ bound for all $\tilde{\rho}$, we are free to drop small probability sets. We need the following lemma. 

\begin{lem} \label{lem:iterate}
There is $h>0$, such that when $\delta_{0}$ is chosen small enough, one can always find  $\delta$,in \eqref{eq: enhanced}, small enough (depending on $\delta_{0},\rho,m, h$), such that if 
\begin{equation}
\|w\|_{L^{\rho}_{\omega}\chi(0,c)}< \delta_{0}
\end{equation}
and $d-c\sim h$,
then one has, 
\begin{equation}
\|w\|_{\chi(0,d)}\lesssim \delta _{0}
\end{equation}
\end{lem}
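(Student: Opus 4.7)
The plan is to set up a Duhamel identity for the difference $w = v - \widetilde v$ on the interval $[c,d]$, decompose the noise contributions exactly as in Section~\ref{sec:unm_bd}, and close a bootstrap that exploits the smallness of $h=d-c$ together with the assumed closeness $\|w\|_{L^\rho_\omega \chi(0,c)}<\delta_0$ and the noise closeness $\sum_k \|V_k - \widetilde V_k\|_{L^1\cap L^\infty}<\delta$. Subtracting the two equations \eqref{eq:unm} and applying Duhamel at the base-point $c$, for $t \in [c,d]$ one gets
\begin{equation*}
w(t) = e^{i(t-c)\Delta}w(c) - i\int_c^t e^{i(t-s)\Delta}\bigl[\phi_m(\|v\|_{\chi_2(0,s)})\nN(v) - \phi_m(\|\widetilde v\|_{\chi_2(0,s)})\nN(\widetilde v)\bigr]\,{\rm d}s - i\int_c^t e^{i(t-s)\Delta}\bigl(w(s)\dot W^{(n)}\bigr)\,{\rm d}s - i\int_c^t e^{i(t-s)\Delta}\bigl(\widetilde v(s)\dot U^{(n)}\bigr)\,{\rm d}s,
\end{equation*}
where $U:=W-\widetilde W$. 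Theorem~\ref{th:unm_uniform_bd_stable} provides uniform $L^{\tilde\rho}_\omega\chi(0,1)$ bounds on $v$ and $\widetilde v$ for every $\tilde\rho$, so all moments we will need are available.

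Next, on each of the four terms I would import the machinery of Section~\ref{sec:unm_bd}. The noise-transport term $\int_c^t e^{i(t-s)\Delta}(w\dot W^{(n)})\,{\rm d}s$ is expanded once more along the lines of \eqref{eq: fullexpansion}, producing a martingale piece $\sS^w_{\mar}$, a quadratic piece $\sS^w_{\qua}$, and a remainder $\widetilde N(w;s)$; the source-term estimates of Lemma~\ref{le:source_max} apply verbatim (with $w$ in place of $v$), and by Remarks~\ref{remark: extrasmall1} and~\ref{remark: exs2} these bounds gain a positive power $h^\gamma$ when restricted to the short interval $[c,d]$. The noise-difference term $\int_c^t e^{i(t-s)\Delta}(\widetilde v\dot U^{(n)})\,{\rm d}s$ is treated identically but with $W$ replaced by $U$; the summability $\sum_k \|V_k-\widetilde V_k\|_{L^1\cap L^\infty}<\delta$ forces the analogous source quantities to be $\lesssim \delta^{1/2}$ (in $L^\rho_\omega$). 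For the nonlinearity-difference term, the truncation $\phi_m$ and the mean-value theorem bound it pointwise by $C_m(\|v\|_{\chi_2}^4+\|\widetilde v\|_{\chi_2}^4)|w|$, so by Strichartz and the a priori bound on $v,\widetilde v$, its $\chi$-norm on $[c,d]$ is $\lesssim C_m\,h^{\gamma'}\|w\|_{\chi(c,d)}$ for some $\gamma'>0$.

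The core of the argument is then a type-A/type-B partition of $[c,d]$ exactly as in Section~\ref{sec:unm_bd}: on type-A subintervals (rare) bound $w$ crudely via Lemma~\ref{le: lemlosc} applied to both $v$ and $\widetilde v$ and use the Gaussian tail of $\|\Delta_j W\|_{L^\infty}$ to absorb their contribution in $L^\rho_\omega$; on the joint type-B subintervals, run the bootstrap of Lemma~\ref{lem: bootstrap} for $w$ rather than $v$, with the source terms $\sS^w_{\mar}$, $\sS^w_{\qua}$ and the noise-difference sources playing the role of the inhomogeneity $h_2$, and $\widetilde N(w;s)$ plus the nonlinearity difference playing the role of $h_1$. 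Choosing $\eta$ small (universal) and then $h$ small depending on $m$, one obtains a pathwise-style estimate
\begin{equation*}
\|w\|_{\chi(c,d)} \lesssim \|w(c)\|_{L^2} + \bigl(\text{type-A contribution}\bigr) + \bigl(\text{source terms}\bigr),
\end{equation*}
and taking $L^\rho_\omega$ and combining with the hypothesis $\|w\|_{L^\rho_\omega\chi(0,c)}<\delta_0$ as well as the noise closeness $\delta$ (made small depending on $\delta_0,\rho,m,h$) yields $\|w\|_{L^\rho_\omega\chi(0,d)}\lesssim \delta_0$.

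The hardest point is the interaction between the truncation $\phi_m$ and the bootstrap: the Lipschitz constant of $\phi_m(\|\cdot\|_{\chi_2})\nN(\cdot)$ is of order $C_m$, which is why $h$ must be allowed to depend on $m$; it also means that one cannot treat the nonlinearity difference as a genuine source term (as one does with $\sS_{\mar}$ and $\sS_{\qua}$) but must absorb it on the left, and this absorption requires a careful choice of norms so that the factor $h^{\gamma'}$ really appears with a positive power. A secondary subtlety is that the martingale/quadratic decomposition for the $w\dot W^{(n)}$ term involves $w$ rather than a uniformly bounded process, so one must iterate the argument on the type-B pieces rather than controlling everything in one step, using that $\sum_l |b_l-a_l|\leq h$ and that the number of type-B subintervals has all moments finite by Lemma~\ref{lem: finitenumber}.
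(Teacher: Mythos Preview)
Your Duhamel setup, the second expansion of the $w\dot W^{(n)}$ term into martingale/quadratic pieces $\sS^w_{\mar},\sS^w_{\qua}$, and the observation that these source terms gain a factor $h^\gamma$ on a short window (Remarks~\ref{remark: extrasmall1},~\ref{remark: exs2}) are all correct and match the paper's Lemmas~\ref{lem: stabconsou}--\ref{lem: stable}. The treatment of the $\widetilde v\,\dot U^{(n)}$ terms as $O(\delta)$ sources is also right.

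The gap is in your handling of the nonlinearity difference. You claim that Strichartz on $[c,d]$ gives
\[
\Big\|\int_c^t e^{i(t-s)\Delta}\bigl[\phi_m\nN(v)-\phi_m\nN(\widetilde v)\bigr]\,{\rm d}s\Big\|_{\chi(c,d)}\lesssim C_m\,h^{\gamma'}\|w\|_{\chi(c,d)}
\]
for some $\gamma'>0$. This is false: the nonlinearity $|u|^4u$ is \emph{mass-critical}, so the H\"older bound $\|\nN(v)-\nN(\widetilde v)\|_{L^1_tL^2_x(c,d)}\lesssim\bigl(\|v\|_{\chi_2(c,d)}^4+\|\widetilde v\|_{\chi_2(c,d)}^4\bigr)\|w\|_{\chi_2(c,d)}$ has no positive power of $|d-c|$ left over. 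The truncation only ensures $\|v\|_{\chi_2(0,1)}\lesssim m$, not that $\|v\|_{\chi_2(c,d)}$ is small when $d-c$ is small (for fixed $\omega$ it need not be). So there is nothing to absorb on the left, and the bootstrap you describe does not close.

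The paper's cure is different from the type-A/type-B machinery you invoke, and in fact that machinery is not used here at all. Since the truncation guarantees $\|v\|_{\chi_2(0,1)},\|\widetilde v\|_{\chi_2(0,1)}\leq 2m$ deterministically (up to where $\phi_m$ vanishes), one can partition $[c,T]$ into at most $J\leq C_m\sim (m/\eta)^5$ random subintervals $[a_l,a_{l+1}]$ on each of which either $\|v\|_{\chi_2(a_l,a_{l+1})}\leq\eta$ or $\phi_m(\|v\|_{\chi_2(0,\cdot)})\equiv0$, and likewise for $\widetilde v$. On each such piece the nonlinearity difference contributes $\lesssim\eta^4\|w\|_{\chi(0,a_{l+1})}$, which for $\eta$ small is absorbed; one then iterates the resulting inequality $J\leq C_m$ times, takes $L^\rho_\omega$, and uses the $h^\gamma$ gain on the noise sources $S_1^*,S_2^*$ to close (this is the content of Lemma~\ref{lem: onemorebootstrap} and its proof). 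The smallness that beats the critical nonlinearity thus comes from $\eta$, with the price $C_m$ paid in the iteration count, not from $h$. Your type-A/type-B scheme would also run into trouble because on a type-A interval you only get $\|w\|\lesssim 1$, not $\|w\|\lesssim\delta_0$, and pathwise this large value of $w$ would propagate forward.
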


Once we have Lemma~\ref{lem:iterate}, we can then iterate it $\sim 1/h$ times and the desired stability follows. It should be expected $\delta \ll \delta_{0}$, and $-\log \delta_{0}\sim -\frac{1}{h} \log \eps$.

For technical convenience, we will only consider the case $c=t_{j}, d=t_{j'}$ for some $j<j'$. This does not cause problems since we assume $\|\pi\|$ is small enough, and such smallness could depend on $m$. Lemma~\ref{lem:iterate} will be reduced to the following bootstrap lemma. 

\begin{lem}\label{lem: onemorebootstrap}
Given the assumption of Lemma \ref{lem:iterate}, one can find $C>0$ such that if for $T \in [c,d]$, we have the bootstrap assumption
\begin{equation}\label{eq: onemoreas}
\|w\|_{L_{\omega}^{\rho}\xX(0,T)}\leq 2C \delta_{0},
\end{equation}
then one has 
\begin{equation}\label{eq: onemorebootstrap}
\|w\|_{L_\omega^{\rho}\xX(0,T) }\leq C \delta_{0}.
\end{equation}
The value of $C$ could depend on $m$, which is fixed in this section. 
\end{lem}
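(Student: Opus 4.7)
The plan is to mirror the strategy of Section~\ref{sec:unm_bd} applied to the difference $w = v - \widetilde{v}$, working on the short interval $[c,T]$ with $T-c \leq h$ and using Strichartz. Let me assume $c$ and $T$ are partition points (modulo harmless endpoint adjustments). Writing Duhamel from time $c$ on $[c,T]$, one gets
\begin{equation*}
w(t) = e^{i(t-c)\Delta} w(c) + \mathrm{NL}(t) + \mathrm{Diag}(t) + \mathrm{Src}(t),
\end{equation*}
where $\mathrm{NL}$ is the difference of the two truncated nonlinearities, $\mathrm{Diag}(t) = -i\int_c^t e^{i(t-s)\Delta}\big(w(s)\,\dot W^{(n)}(s)\big)ds$, and $\mathrm{Src}(t) = -i\int_c^t e^{i(t-s)\Delta}\big(\widetilde v(s)\,\dot U^{(n)}(s)\big)ds$ with $U = W - \widetilde W$. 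The bound we need is then closed via $\|w\|_{L^\rho_\omega\xX(0,T)} \leq \|w\|_{L^\rho_\omega\xX(0,c)} + \|w\|_{L^\rho_\omega\xX(c,T)}$, where the first summand is $\leq \delta_0$ by the hypothesis of Lemma~\ref{lem:iterate}.

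The first term $e^{i(t-c)\Delta}w(c)$ is controlled by $\|w(c)\|_{L^\rho_\omega L^2_x} \leq \delta_0$ via the Strichartz inequality \eqref{eq: stri}. For $\mathrm{NL}$, since the truncation keeps $\phi_m(\|\cdot\|_{\xX_2}) |\cdot|^4\cdot$ effectively linear, one uses the pointwise estimate
\begin{equation*}
\big|\phi_m(\|v\|_{\xX_2})|v|^4 v - \phi_m(\|\widetilde v\|_{\xX_2})|\widetilde v|^4 \widetilde v\big| \lesssim (|v|^4 + |\widetilde v|^4)|w| + \|w\|_{\xX_2(0,t)} \big(|v|^4 + |\widetilde v|^4\big)|\widetilde v|,
\end{equation*}
together with Strichartz, H\"older in time to peel off a factor $h^\alpha$ for some $\alpha>0$, and the uniform bounds \eqref{eq:unm_uniform_bd} for $v$ and $\widetilde v$. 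This yields $\|\mathrm{NL}\|_{L^\rho_\omega\xX(c,T)} \lesssim_m h^\alpha \|w\|_{L^\rho_\omega\xX(0,T)}$.

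The diagonal noise $\mathrm{Diag}$ has exactly the same structure as the noise contribution studied in Section~\ref{sec:unm_bd}. Expanding $w(s)$ by Duhamel at $[s]$ as in \eqref{eq: secondexpansion}--\eqref{eq: workingdu} produces a martingale source, a quadratic-variation source, and a perturbative remainder $\widetilde N(w;s)$, now with $w$ in place of $v$. Repeating the maximal estimates of Lemma~\ref{le:source_max} on the interval $[c,T]$ of length $h$ and exploiting the subcritical gain mentioned in Remarks~\ref{remark: extrasmall1} and~\ref{remark: exs2} gives $\lesssim_m h^\gamma \|w\|_{L^\rho_\omega \xX(0,T)}$ for some $\gamma>0$; rare-event (type-A) intervals within $[c,T]$ are absorbed in the same way as before via Lemma~\ref{le: lemlosc} applied to $w$. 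The source term $\mathrm{Src}$ is the key genuinely small contribution. Decomposing it via the same martingale plus quadratic plus remainder split, but with $\Delta_j U$ in place of $\Delta_j W$, every estimate in Lemma~\ref{le:source_max} gains a multiplicative factor inherited from $\sum_k \|V_k - \widetilde V_k\|_{L^1_x \cap L^\infty_x} < \delta$, combined with the uniform bound on $\widetilde v$ from Theorem~\ref{th:unm_uniform_bd_stable}. This gives $\|\mathrm{Src}\|_{L^\rho_\omega\xX(c,T)} \lesssim_m \delta$.

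Combining all of the above yields
\begin{equation*}
\|w\|_{L^\rho_\omega \xX(0,T)} \leq 2\delta_0 + C_m \delta + C_m (h^\alpha + h^\gamma)\,\|w\|_{L^\rho_\omega \xX(0,T)}.
\end{equation*}
Choose $h$ first so small that $C_m(h^\alpha + h^\gamma) \leq 1/2$, and then $\delta$ small enough (depending on $\delta_0,\rho,m,h$) so that $C_m \delta \leq \delta_0$. Absorbing the $\|w\|$ term on the left, we conclude $\|w\|_{L^\rho_\omega\xX(0,T)} \leq 6\,\delta_0$, which closes the bootstrap with $C=6$. The main obstacle is the diagonal noise estimate: one must reproduce the full machinery of Section~\ref{sec:unm_bd} (type A/B splitting, martingale decomposition, modified stability) for $w$ on the short interval while harvesting a quantitative $h^\gamma$ gain, all uniformly in $n$; the treatment of $\mathrm{Src}$, by contrast, is a direct rerun of Lemma~\ref{le:source_max} with $U$ in place of $W$ and is essentially bookkeeping.
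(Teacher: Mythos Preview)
Your treatment of the nonlinear difference $\mathrm{NL}$ has a genuine gap. You claim that H\"older in time peels off a factor $h^{\alpha}$, but in the mass-critical case the Strichartz exponents are saturated: the estimate
\[
\big\|(|v|^{4}+|\widetilde v|^{4})|w|\big\|_{L^{1}_{t}L^{2}_{x}(c,T)} \lesssim \big(\|v\|_{\xX_2(c,T)}^{4}+\|\widetilde v\|_{\xX_2(c,T)}^{4}\big)\,\|w\|_{\xX_2(c,T)}
\]
has no room for an extra power of $|T-c|$. Worse, the prefactor $\|v\|_{\xX_2}^{4}$ is random, so after taking $L^{\rho}_{\omega}$ you are forced into $L^{\rho'}_{\omega}$ with $\rho'<\rho$ on the factor $\|w\|_{\xX_2}$; the bootstrap then cannot close in the stated norm.

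The paper avoids both issues by exploiting the truncation $\phi_m$ deterministically. One partitions $[c,T]$ into at most $C_m \sim m^{5}/\eta^{5}$ random subintervals $[a_l,a_{l+1}]$ on each of which either $\|v\|_{\xX_2(a_l,a_{l+1})}\leq \eta$ or the truncation has already switched off (and similarly for $\widetilde v$). On each such interval the nonlinear coefficient is the \emph{deterministic} number $\eta^{4}$, which is absorbed into the left-hand side; there is no $h$-gain from the nonlinearity and no $\omega$-integrability loss. One then iterates the resulting inequality $C_m$ times pathwise, and only afterwards takes $L^{\rho}_{\omega}$. The $h^{\gamma}$ smallness that eventually beats the accumulated constant $C_m$ comes solely from the noise sources $S_1,S_2$ (your $\mathrm{Diag}$ after the martingale/quadratic split) via Lemmas~\ref{lem: stabconsou} and~\ref{lem: stable}, together with a $\|\pi^{(n)}\|^{\theta}$ contribution from $\sup_j\|\Delta_j W\|_{L^\infty_x}$ and the $\delta$ from the $U$-terms. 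Your handling of $\mathrm{Diag}$ and $\mathrm{Src}$ is essentially correct, but no type-A/type-B splitting is needed here; the $\sup_j\|\Delta_j W\|$ factor is simply estimated in $L^{\rho}_{\omega}$ by $\|\pi^{(n)}\|^{\theta}$ after the pathwise iteration.
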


We first derive a few useful estimates before finally turning to the proof of Lemma~\ref{lem: onemorebootstrap} at the end of this section. 

\subsection{Equation for $w$ and a collection of estimates} \label{sub111}

We first write down the equation for $w$. For any $[a,t] \subset [c,d]$ (recalling \eqref{eq: fullexpansion}), we have
\begin{equation}\label{eq: perturbedduhamel}
\begin{aligned}
w(t)=&e^{i(t-a)\Delta}w(a) - i\int_{a}^{t} e^{i(t-s)\Delta} \Big(\phi_{m}(\|v\|_{\xX_{2}(0,s)}) \big(\nN(v)
-\nN(\widetilde{v}) \big) \Big) {\rm d}s\\
\pm &i \int_{a}^{t} e^{i(t-s)\Delta} \Big( \big(\phi_{m}(\|v\|_{\xX_{2}(0,s)})-\phi_{m}(\|\widetilde{v}\|_{\xX_{2}(0,s)}) \big) \nN(\widetilde{v}(s)) \Big) {\rm d}s\\
\pm&\int_{a}^{t} e^{i(t-s)\Delta} \left(\dws\int_{[s]}^{s} e^{i(s-r)\Delta} \left( \phi_{m}(\|v\|_{\xX_{2}(0,r)}) \nN(v) - \phi_{m}(\|\widetilde{v}\|_{\xX_{2}(0,r)})\nN(\widetilde{v}) \right) {\rm d}r \right) {\rm d}s\\
\pm&\int_{a}^{t} e^{i(t-s)\Delta} \left(\dus \int_{[s]}^{s} e^{i(s-r)\Delta} \phi_{m}(\|\widetilde{v}\|_{\xX_{2}(0,r)}) \nN(\widetilde{v}) {\rm d}r \right) {\rm d}s\\
-&i \int_{a}^{t} e^{i((t-s)} \left( \dws \cdot e^{i(s-[s]\Delta)} w([s]) \right) {\rm d}s\\
\pm &\int_{a}^{t} e^{i(t-s)}  \left( \dus \cdot e^{i(s-[s])\Delta} \widetilde{v}(s) \right) {\rm d}s\\
-&\int_{a}^{t} e^{(t-s)\Delta} \left[ \dws \int_{[s]}^{s} e^{i(s-r)\Delta} \left(\dws w((r)) {\rm d}r \right) \right] {\rm d}s\\
\pm&\int_{a}^{t} e^{(t-s)\Delta} \left[ \dus \int_{[s]}^{s} e^{i(s-r)\Delta} \left(\dws \cdot v(r) \right) {\rm d} r \right] {\rm d}s\\
\pm&\int_{a}^{t} e^{(t-s)\Delta} \left[\dws \int_{[s]}^{s} e^{i(s-r)\Delta} \left( \dus v(r) \right) {\rm d}r \right] {\rm d}s\;.
\end{aligned}
\end{equation}
Here, $\Delta_j U$ is defined similarly as $\Delta_j W$. We will treat all the terms with $\pm$ sign before them perturbatively, so the exact choices of the sign do not matter in the analysis. We introduce some notation to simplify the above expression. Let
\begin{itemize}
\item $S_{1}(a,t):=-i \int_{a}^{t} e^{i((t-s)} \left( \dws \cdot e^{i(s-[s]\Delta)} w([s]) \right) {\rm d}s$;
\item $S_{2}(a,t):=-\int_{a}^{t} e^{(t-s)\Delta} \left[ \dws \int_{[s]}^{s} e^{i(s-r)\Delta} \left(\dws w((r)) {\rm d}r \right) \right] {\rm d}s$;
\item $e_{1}(a,t):=\pm \int_{a}^{t} e^{i(t-s)\Delta} \left(\dws\int_{[s]}^{s} e^{i(s-r)\Delta} \left( \phi_{m}(\|v\|_{\xX_{2}(0,r)}) \nN(v) - \phi_{m}(\|\widetilde{v}\|_{\xX_{2}(0,r)})\nN(\widetilde{v}) \right) {\rm d}r \right) {\rm d}s$;
\item $e_{2}(a,t):= \pm i \int_{a}^{t} e^{i(t-s)\Delta} \Big( \big(\phi_{m}(\|v\|_{\xX_{2}(0,s)})-\phi_{m}(\|\widetilde{v}\|_{\xX_{2}(0,s)}) \big) \nN(\widetilde{v}(s)) \Big) {\rm d}s$;
\item $e_{3}(a,t):=\pm \int_{a}^{t} e^{i(t-s)\Delta} \left(\dus \int_{[s]}^{s} e^{i(s-r)\Delta} \phi_{m}(\|\widetilde{v}\|_{\xX_{2}(0,r)}) \nN(\widetilde{v}) {\rm d}r \right) {\rm d}s$;
\item $e_{4}(a,t):=\pm \int_{a}^{t} e^{i(t-s)}  \left( \dus \cdot e^{i(s-[s])\Delta} \widetilde{v}(s) \right) {\rm d}s$; 
\item $e_{5}(a,t):=\pm \int_{a}^{t} e^{(t-s)\Delta} \left[ \dus \int_{[s]}^{s} e^{i(s-r)\Delta} \left(\dws \cdot v(r) \right) {\rm d} r \right] {\rm d}s$; 
\item $ e_{6}(a,t):=\pm \int_{a}^{t} e^{(t-s)\Delta} \left[\dws \int_{[s]}^{s} e^{i(s-r)\Delta} \left( \dus v(r) \right) {\rm d}r \right] {\rm d}s$.
\end{itemize}
Now, we can write \eqref{eq: perturbedduhamel} as 
\begin{equation} \label{eq: pesimp}
\begin{split}
w(t) = &e^{i(t-a)\Delta}w(a) - i\int_{a}^{t} e^{i(t-s)\Delta} \Big(\phi_{m}(\|v\|_{\xX_{2}(0,s)}) \big(\nN(v)
-\nN(\widetilde{v}) \big) \Big) {\rm d}s\\
&+S_{1}(a,t) + S_{2}(a,t) +\sum_{i} e_{i}(a,t).
\end{split}
\end{equation}
We first present all the estimates for $S_{i}$ and $e_{j}$. Since the estimates are of same nature as in the proof of Theorem~\ref{th:unm_uniform_bd_stable}, we make a sketch of the arguments and highlight only the differences. We will work on time interval $[a,b] \subset [c,T] \subset [c,d]$. 

We start with $S_{1}$ and $S_{2}$. Similarly as \eqref{eq: estimatestragiht}, Lemma~\ref{le:source_max} (see also Remarks~\ref{remark: extrasmall1} and~\ref{remark: exs2}), we can find $S_{1}^{*}(t)$ and $S_{2}^{*}(t)$ such that the following lemma holds. 

\begin{lem}\label{lem: stabconsou}
	For $[c,d]$ such that $d-c \leq h$, we have
\begin{equation}
\|S_{i}(a,t)\|_{L_{x}^{10}} \lesssim S_{i}^{*}(t), \quad \|S_{i}^{*}(t)\|_{L_{\omega}^{\rho}L_{t}^{5}(c,T)} \lesssim (T-c)^{\gamma} \|w\|_{\xX(c,T)} \lesssim h^{\gamma} \|w\|_{L^{\rho}_{\omega}\xX(0,T)}.
\end{equation}
\end{lem}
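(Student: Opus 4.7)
The plan is to define $S_i^*(t)$ as direct analogues of the maximal functions $\sS^*_\mar(t)$ and $\sS^*_\qua(t)$ from \eqref{eq:smmax}--\eqref{eq: sqmax}, but with $w$ in place of $v$ (and with the integration starting from $c$ rather than from $0$). Specifically, I would set
\begin{equation*}
S_1^*(t) := \sup_{c\leq \tau \leq t} \Big\| \int_c^\tau e^{i(t-s)\Delta}\Big(\tfrac{\Delta_{j(s)}W}{t_{j(s)+1}-t_{j(s)}}\, e^{i(s-[s])\Delta} w([s])\Big)\,ds\Big\|_{L_x^{10}},
\end{equation*}
and $S_2^*(t)$ the analogous integral expression for the quadratic piece. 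The pathwise bound $\|S_i(a,t)\|_{L_x^{10}} \leq S_i^*(t)$ is then immediate by monotonicity of the sup in $a$. What remains is to prove the $L_\omega^\rho L_t^5(c,T)$ bound with the crucial extra factor $(T-c)^\gamma$.

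For $S_1^*$, I would follow the proof of \eqref{eq: p1es} verbatim, with $w$ replacing $v$. Split $\int_c^\tau = \int_c^{[\tau]} + \int_{[\tau]}^\tau$. The short tail $\int_{[\tau]}^\tau$ is controlled exactly as in Lemma~\ref{lem: smallosc}, using the dispersive estimate \eqref{eq: dispersive} and H\"older, with $\|e^{i(s-[s])\Delta} w([s])\|_{L_x^2}\lesssim \|w\|_{\xX_1(c,T)}$ replacing the mass bound \eqref{eq: massapriori}. The main piece $\int_c^{[\tau]}$ is a discrete martingale in $L_x^{10}$ (each $w(t_j)$ is $\mathcal{F}_{t_j}$-measurable since $w=v-\widetilde v$ is adapted), so Burkholder \eqref{eq: burkholder} converts the sup-in-$\tau$ of $L_x^{10}$ into the square function; repeating the computation \eqref{eq: qq1}--\eqref{eq: qq2} with $v$ replaced by $w$ leads to
\begin{equation*}
\|S_1^*(t)\|_{L_\omega^\rho}^\rho \lesssim \Big\|\|w\|_{\xX_1(c,T)}^2 \sum_j \tfrac{\|\Delta_j W\|_{L_x^{5/2}}^2}{t_{j+1}-t_j}\int_{t_j}^{t_{j+1}}(t-s)^{-2/5}\,ds\Big\|_{L_\omega^{\rho/2}}^{\rho/2},
\end{equation*}
and then using Minkowski, \eqref{eq: sb}, and H\"older as in \eqref{eq: pp4}, we obtain $\|S_1^*(t)\|_{L_\omega^\rho} \lesssim \|\,\|w\|_{\xX_1(c,T)}\,\|_{L_\omega^\rho} \cdot (\int_c^t (t-s)^{-4/5}\,ds)^{1/2}$. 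Taking $L_t^5$ in $[c,T]$ and integrating out the singular kernel produces the subcritical factor $(T-c)^\gamma$ by Remark~\ref{remark: extrasmall1}.

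For $S_2^*$ the argument is more direct: repeating \eqref{eq: pp3} with $w$ in place of $v$ and using $\|w(r)\|_{L_x^2} \leq \|w\|_{\xX_1(c,T)}$ gives a pointwise-in-$t$ bound with integrand $\lesssim (t-s)^{-2/5}\|\Delta_j W\|_{L_x^{5/2}}\|\Delta_j W\|_{L_x^\infty}/(t_{j+1}-t_j)\cdot\|w\|_{\xX_1(c,T)}$. Summing the $L_\omega^\rho$ norms (using independence/Gaussianity of increments and \eqref{eq: sb}) and integrating in time over $[c,T]$ yields the same $(T-c)^\gamma$ gain, this time via Remark~\ref{remark: exs2}. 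Finally, since $\|w\|_{\xX(c,T)} \leq \|w\|_{\xX(0,T)}$, the second inequality in the statement follows, with the $h^\gamma$ factor coming from the hypothesis $T-c \leq d-c \sim h$.

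The main subtlety, and the only new point compared to Section~\ref{sec:unm_bd}, is that $w$ enjoys no mass conservation law; we must replace every appearance of the deterministic bound \eqref{eq: massapriori} by the random quantity $\|w\|_{\xX_1(c,T)}$ and be careful to pull it through the probabilistic estimates at the correct stage so that it factors cleanly as $\|w\|_{L_\omega^\rho\xX(0,T)}$ after taking $L_\omega^\rho$. Since $w$ is adapted to the filtration of $W$ and $\|w\|_{\xX_1(c,T)}$ is measurable, the Burkholder step for $S_1^*$ and the direct estimate for $S_2^*$ both accommodate this modification without loss.
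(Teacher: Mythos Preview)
Your approach is exactly the one the paper intends: the paper's ``proof'' of this lemma is simply the sentence preceding it, referring back to \eqref{eq: estimatestragiht}, Lemma~\ref{le:source_max}, and Remarks~\ref{remark: extrasmall1}--\ref{remark: exs2}. Your definitions of $S_i^*$ and the martingale/dispersive computations are the correct adaptations.

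There is one point in your execution worth flagging. In the Burkholder step for $S_1^*$, you factor out the random quantity $\|w\|_{\xX_1(c,T)}^2$ from the square function before taking $L_\omega^{\rho/2}$. But $\|w\|_{\xX_1(c,T)}$ is \emph{not} independent of $\Delta_j W$ (it depends on the path on all of $[c,T]$), so separating the two factors via H\"older costs a moment: you would land on $\|w\|_{L_\omega^{2\rho}\xX}$ rather than $\|w\|_{L_\omega^{\rho}\xX}$, and the bootstrap in Lemma~\ref{lem: onemorebootstrap} would not close at the fixed exponent $\rho$. The clean fix is to keep $\|w(t_j)\|_{L_x^2}$ inside each summand, use that $\Delta_j W$ is independent of $\mathcal F_{t_j}$ (and hence of $w(t_j)$) so that
\[
\big\|\,\|\Delta_j W\|_{L_x^{5/2}}^2\,\|w(t_j)\|_{L_x^2}^2\,\big\|_{L_\omega^{\rho/2}}
=\|\Delta_j W\|_{L_\omega^{\rho}L_x^{5/2}}^2\cdot\|w(t_j)\|_{L_\omega^{\rho}L_x^2}^2,
\]
and only then bound $\|w(t_j)\|_{L_\omega^{\rho}L_x^2}\le\|w\|_{L_\omega^{\rho}\xX(0,T)}$. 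The same remark applies to the tail piece from Lemma~\ref{lem: smallosc} and to $S_2^*$. With this adjustment your argument goes through without loss and matches the paper's intended proof.
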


We will not keep track of the exact value of $\gamma$, and in fact, we allow it to change line by line (but with a positive lower bound). We need a similar control for the $L_{x}^{2}$ norm for $S_{i}$. This is indeed easier since
\begin{equation}\label{eq: onemorestable}
\begin{aligned}
&\left\| \int_{a}^{t}e^{i(t-s)\Delta} \left( \dws \cdot e^{i(s-[s])\Delta} w([s]) \right) {\rm d}s \right\|_{L_{x}^{2}}\\
=&\left\| \int_{a}^{t} e^{i(-s)\Delta} \left( \dws \cdot e^{i(s-[s])\Delta} w([s]) \right) {\rm d}s \right\|_{L_{x}^{2}}\\
\lesssim &\sup_{\tau \in [c,d]} \left\| \int_{c}^{\tau}e^{-i s \Delta} \left( \dws \cdot e^{i(s-[s])\Delta} w([s]) \right) {\rm d}s \right\|_{L_{x}^{2}}\;.
\end{aligned}
\end{equation}
The last term does not depend on $t$. Similar observation works for $S_{2}$. Then, one may derive the following analogue of Lemma \ref{lem: stabconsou},
\begin{lem}\label{lem: stable}
There exists $\tilde{S}_{i}^{*}$, $i=1,2$ such that
\begin{equation}
\|S_{i}(a,t)\|_{L_{x}^{2}} \lesssim \tilde{S}_{i}^{*}, \qquad \|\tilde{S_{i}}^{*}\|_{L_{\omega}^{\rho}}\lesssim h^{\gamma}\|w\|_{L_{\omega}^{\rho}\xX(0,T)},
\end{equation}
The bounds are uniform in $t \in [0,T]$. 
\end{lem}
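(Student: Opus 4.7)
The starting point is the observation already made in \eqref{eq: onemorestable}: since $e^{it\Delta}$ is an isometry on $L_x^2$, for any $t\in[a,b]\subset[c,T]$,
\begin{equation*}
\|S_1(a,t)\|_{L_x^2}
=\Big\|\int_a^t e^{-is\Delta}\!\Big(\dws\,e^{i(s-[s])\Delta}w([s])\Big){\rm d}s\Big\|_{L_x^2}.
\end{equation*}
Hence, choosing
\begin{equation*}
\tilde S_1^{*}:=\sup_{c\le a'\le t'\le T}\Big\|\int_{a'}^{t'}e^{-is\Delta}\Big(\dws\,e^{i(s-[s])\Delta}w([s])\Big){\rm d}s\Big\|_{L_x^2}
\end{equation*}
gives the pointwise bound $\|S_1(a,t)\|_{L_x^2}\le\tilde S_1^{*}$, uniformly in $t\in[c,T]$. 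The analogous definition of $\tilde S_2^{*}$ (with the integral expression of $S_2$) produces the pointwise bound for $S_2$.

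To estimate $\|\tilde S_1^{*}\|_{L_\omega^\rho}$, I would split the integral into a sum over those partition intervals $[t_j,t_{j+1}]$ fully contained in $[a',t']$, plus the two partial boundary intervals. Since $e^{-is\Delta}e^{i(s-[s])\Delta}w([s])=e^{-i[s]\Delta}w([s])$ is constant in $s$ on each $[t_j,t_{j+1}]$, the $s$-integration over a full interval produces the clean sum
\begin{equation*}
M_{j'}:=\sum_{j:\,t_{j+1}\le t'}\Delta_jW\cdot e^{-it_j\Delta}w(t_j),
\end{equation*}
which is a discrete $L_x^2$-valued martingale with respect to the filtration $\{\fF_{t_j}\}$. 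Applying the discrete Burkholder inequality \eqref{eq: disburkholder} and then Minkowski (using $\rho\ge 2$) gives
\begin{equation*}
\Big\|\sup_{j'}\|M_{j'}\|_{L_x^2}\Big\|_{L_\omega^\rho}^{2}
\lesssim \sum_j\big\|\,\|\Delta_jW\|_{L_x^\infty}^{2}\,\|w(t_j)\|_{L_x^2}^{2}\big\|_{L_\omega^{\rho/2}}
\lesssim \sum_{t_j\in[c,T]}(t_{j+1}-t_j)\,\|w\|_{L_\omega^\rho\xX_1(0,T)}^{2},
\end{equation*}
where I used $\|\Delta_jW\|_{L_\omega^{2\rho}L_x^\infty}^{2}\lesssim t_{j+1}-t_j$ from \eqref{eq: sb}. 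The right hand side is bounded by $(T-c)\|w\|_{L_\omega^\rho\xX(0,T)}^{2}\le h\,\|w\|_{L_\omega^\rho\xX(0,T)}^{2}$, giving $\gamma=\tfrac12$ for the martingale contribution. The two boundary pieces are handled by the same Strichartz/H\"older computation as in Lemma~\ref{lem: smallosc} (but in $L_x^2$ rather than $L_x^{10}$, so the dispersive factor is absent and the estimate is even simpler); they produce a single term of size $\|\Delta_{j(a')}W\|_{L_x^\infty}\|w([a'])\|_{L_x^2}$ plus its analogue at $t'$, whose $L_\omega^\rho$ norm is again $\lesssim h^{\gamma}\|w\|_{L_\omega^\rho\xX(0,T)}$.

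For $\tilde S_2^{*}$, the structure is quadratic in the increments of $W$, so no martingale argument is needed. Proceeding exactly as in the derivation of \eqref{eq: pp3}, but without the dispersive factor $(t-s)^{-2/5}$ (which is unnecessary in $L_x^2$), one gets a pointwise bound of the form
\begin{equation*}
\tilde S_2^{*}\lesssim \sum_{t_j\in[c,T]}\frac{\|\Delta_jW\|_{L_x^\infty}\|\Delta_jW\|_{L_x^\infty}}{t_{j+1}-t_j}\int_{t_j}^{t_{j+1}}\|w(r)\|_{L_x^2}\,{\rm d}r,
\end{equation*}
whose $L_\omega^\rho$ norm is controlled, via Minkowski and \eqref{eq: sb}, by $\sum_j(t_{j+1}-t_j)\|w\|_{L_\omega^\rho\xX_1(0,T)}\lesssim h\|w\|_{L_\omega^\rho\xX(0,T)}$. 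The only delicate point in the whole argument is making sure the ``sup over $t$'' can be pulled through the $L_x^2$-norm cleanly; this is precisely what the factorisation of $e^{it\Delta}$ achieves, and once that is in place the rest follows the same pattern as Section~\ref{sec:unm_bd}. I expect no new obstacle beyond keeping the smallness factor $h^{\gamma}$ visible at every step.
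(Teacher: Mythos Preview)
Your approach is essentially the one the paper intends (the paper gives no detailed proof here, only the reduction \eqref{eq: onemorestable} and a pointer back to Lemma~\ref{le:source_max}), and your estimates land on the correct bounds. There is, however, one algebraic slip worth flagging: you write that $e^{-is\Delta}e^{i(s-[s])\Delta}w([s])=e^{-i[s]\Delta}w([s])$ and hence the integrand is constant in $s$, but the actual integrand is
\[
e^{-is\Delta}\!\Big(\tfrac{\Delta_{j(s)}W}{t_{j(s)+1}-t_{j(s)}}\cdot e^{i(s-[s])\Delta}w([s])\Big),
\]
and the multiplication by $\Delta_{j(s)}W$ (a function of $x$) sits \emph{between} the two propagators, so it does not commute with $e^{-is\Delta}$ and your ``clean sum'' $M_{j'}=\sum_j \Delta_jW\cdot e^{-it_j\Delta}w(t_j)$ is not literally correct. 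Fortunately this does not damage the argument: the true $j$-th increment
\[
X_j=\int_{t_j}^{t_{j+1}} e^{-is\Delta}\Big(\tfrac{\Delta_jW}{t_{j+1}-t_j}\,e^{i(s-t_j)\Delta}w(t_j)\Big)\,{\rm d}s
\]
is still a martingale difference (linear in $\Delta_jB_k$ with $\fF_{t_j}$-measurable coefficients), and by unitarity plus H\"older one has $\|X_j\|_{L_x^2}\le\|\Delta_jW\|_{L_x^\infty}\|w(t_j)\|_{L_x^2}$, which is exactly the bound you then use. So Burkholder applies and your computation after that line is correct; just remove the false commutation claim and work with $X_j$ directly.
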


We finally collect the estimates for the $e_{i}$'s. All the bounds are allowed to depend on $m$. 

\begin{lem}\label{lem: estimatefore}
Let $[a,b]\subset [a,T] \subset [c,d]$. We have
\begin{itemize}
\item Estimate for $e_{1}$:
\begin{equation}\label{eq: 1e1}
\|e_{1}(a,\cdot)\|_{\xX(a,b)} \lesssim \big(\sup_{j}\| \Delta_{j}W \|_{L_{x}^{\infty}} \big) \cdot \|w\|_{\xX(0,b)}.
\end{equation}
\item Estimate for $e_{2}$:
\begin{equation}\label{eq: 1e2}
\|e_{2}(a,\cdot)\|_{\xX(a,b)} \lesssim \frac{1}{m} \|w\|_{\xX(0,b)} \|\widetilde{v}\|_{\xX_{2}(a,b)}^{5}.
\end{equation}
\item Estimate for $e_{3}$: 
\begin{equation}\label{eq:1e3}
\|e_{3}(a,\cdot)\|_{\xX(a,b)} \lesssim \sup_{j}\|\Delta_{j}U\|_{L_x^{\infty}}\;. 
\end{equation}

\item Estimate for $e_{4}, e_{5}, e_{6}$. There exists $e^{*}(t)$ and $\tilde{e}^{*}$ such that
\begin{equation}\label{eq:1e456}
\sum_{i=4}^{6}\|e_{i}(a,t)\|_{L_{x}^{10}} \leq e^{*}(t), \quad \|e^{*}(t)\|_{L^{\rho}_{\omega}L_{t}^{5}(0,1)}\lesssim \delta\;, 
\end{equation}
\begin{equation}\label{eq:2e456}
\sum_{i=4}^{6}\|e_{i}(a,t)\|_{L_{x}^{2}}\leq \tilde{e}^{*}, \quad \|\tilde{e}^{*}\|_{L_{\omega}^{\rho}}\lesssim \delta\;. 
\end{equation}
\end{itemize}
\end{lem}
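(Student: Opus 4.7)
My plan is to control each $e_i$ using the machinery already developed in Section~\ref{sec:unm_bd}, splitting the six terms into three groups by mechanism. For $e_1$ and $e_2$ the driving force is the Lipschitz character of the truncated nonlinearity $w \mapsto \phi_m(\|w\|_{\xX_2})\nN(w)$, whose Lipschitz constant is allowed to depend on $m$ since the lemma permits that. For $e_3$, the a priori control on $\widetilde{v}$ from Theorem~\ref{th:unm_uniform_bd_stable} reduces the estimate to the size of $\Delta_j U$. For the genuinely stochastic source terms $e_4, e_5, e_6$, I would mimic the maximal bounds on $\sS_{\mar}$ and $\sS_{\qua}$ in Lemma~\ref{le:source_max}, with one copy of $\Delta_j W$ replaced by $\Delta_j U$ so that the $\delta$ in \eqref{eq: enhanced} is harvested via Burkholder \eqref{eq: disburkholder}.

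For $e_1$, I would apply Strichartz \eqref{eq: stri} in the dual pair $(L_t^\infty, L_x^2) / (L_t^1, L_x^2)$, reducing the bound to an $L_t^1 L_x^2$ estimate on the integrand. On each sub-interval $[t_{j(s)}, t_{j(s)+1}]$, the factor $\|\Delta_{j(s)}W\|_{L_x^\infty}/(t_{j(s)+1}-t_{j(s)})$ is pulled out of $\dws$, and the inner time integral over $[[s], s]$ supplies a compensating length factor; the Lipschitz bound
\begin{equation*}
\big| \phi_m(\|v\|_{\xX_2(0,r)})\nN(v) - \phi_m(\|\widetilde{v}\|_{\xX_2(0,r)})\nN(\widetilde{v}) \big| \lesssim (|v|^4 + |\widetilde{v}|^4)|w(r)|
\end{equation*}
combined with the uniform $L_x^2$-bound on $v, \widetilde{v}$ yields \eqref{eq: 1e1}. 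For $e_2$, the mean-value theorem applied to $\phi_m$ gives $|\phi_m(\|v\|) - \phi_m(\|\widetilde{v}\|)| \lesssim \tfrac{1}{m}\|w\|_{\xX_2}$, after which Strichartz in the dual pair $(L^{5/4}, L^{10/9})$ produces the factor $\|\widetilde{v}\|_{\xX_2(a,b)}^5$ in \eqref{eq: 1e2}. The bound \eqref{eq:1e3} for $e_3$ is the same computation as $e_1$ with $\Delta_j W$ replaced by $\Delta_j U$ and no $w$-difference present, and the resulting factor $\|\widetilde{v}\|_{\xX_2(a,b)}^5$ is absorbed into a universal constant by Theorem~\ref{th:unm_uniform_bd_stable}.

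For $e_4, e_5, e_6$, I would adapt the maximal arguments \eqref{eq: p1es}, \eqref{eq: p2es}. The term $e_4$ has the exact martingale structure of $\sS_{\mar}$ with $\Delta_j W$ replaced by $\Delta_j U$; applying discrete Burkholder \eqref{eq: disburkholder} and using
\begin{equation*}
\|\Delta_j U\|_{L_\omega^\rho L_x^{5/2}} \leq \sum_k \|V_k - \widetilde{V}_k\|_{L_x^{5/2}} \|\Delta_j B_k\|_{L_\omega^\rho} \lesssim \delta \,(t_{j+1}-t_j)^{1/2}
\end{equation*}
inside the bound of \eqref{eq: pp4} produces a right-hand side of size $\delta$. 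For $e_5, e_6$, the same substitution inside \eqref{eq: pp3} yields a mixed product $\|\Delta_j U\| \cdot \|\Delta_j W\|$, and since the $\|\Delta_j W\|$ factor has typical size $(t_{j+1}-t_j)^{1/2}$ we again get a linear-in-$\delta$ bound. The $L_x^2$ versions in \eqref{eq:2e456} are obtained by the trick \eqref{eq: onemorestable} removing the $t$-dependence via unitarity of $e^{it\Delta}$.

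The main obstacle I anticipate is \eqref{eq: 1e1}: its right-hand side contains $\sup_j \|\Delta_j W\|_{L_x^\infty}$, which is of order $1$ rather than $\delta$. This term therefore cannot be made small on its own and must later be absorbed perturbatively in the bootstrap \eqref{eq: onemoreas} by restricting to a short interval $[c,d]$ with $d-c \sim h$ on which $\sup_j \|\Delta_j W\|_{L_x^\infty}$ is small in $L_\omega^\rho$, mirroring the type-A/type-B dichotomy of Section~\ref{sec:unm_bd}. Every other piece of the lemma is already linear-in-$\delta$, so the whole estimate hinges on ensuring that $e_1$ is compatible with that bootstrap structure.
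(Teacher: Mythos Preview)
Your overall architecture matches the paper: Strichartz plus a Lipschitz-type bound on the truncated nonlinearity for $e_1, e_2, e_3$, and the maximal/martingale machinery of Lemma~\ref{le:source_max} with one $\Delta_j W$ swapped for $\Delta_j U$ for $e_4, e_5, e_6$. The treatment of $e_2$ and of $e_4, e_5, e_6$ is essentially what the paper does.

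There is, however, a genuine gap in your handling of $e_1$ and $e_3$. The bounds \eqref{eq: 1e1} and \eqref{eq:1e3} are \emph{pathwise} statements, so you may not invoke Theorem~\ref{th:unm_uniform_bd_stable}, which only controls $\|\widetilde v\|_{\xX_2}$ in $L_\omega^\rho$. Likewise, the mass conservation bound $\|v\|_{L_t^\infty L_x^2}\lesssim 1$ that you cite is useless here: after your pointwise inequality, the quantity to control is $\|(|v|^4+|\widetilde v|^4)|w|\|_{L_t^1 L_x^2}$, which by H\"older needs $\|v\|_{\xX_2}$ and $\|\widetilde v\|_{\xX_2}$, not $L_x^2$. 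Moreover, the displayed pointwise ``Lipschitz bound'' is simply false for the truncated nonlinearity: the factor $\phi_m(\|v\|_{\xX_2(0,r)})-\phi_m(\|\widetilde v\|_{\xX_2(0,r)})$ is a nonlocal quantity, and the cross term $(\phi_m(\|v\|)-\phi_m(\|\widetilde v\|))\nN(\widetilde v)$ does not reduce to $(|v|^4+|\widetilde v|^4)|w|$.

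What the paper does instead, and what you are missing, is to exploit the truncation $\phi_m$ deterministically: one always has $\|\phi_m(\|v\|_{\xX_2(0,\cdot)})\nN(v)\|_{L_t^1 L_x^2}\le C_m$ because $\phi_m$ vanishes once $\|v\|_{\xX_2}>2m$. This immediately gives \eqref{eq:1e3}. For \eqref{eq: 1e1} the paper then case-splits on whether $\|w\|_{\xX(0,T)}\ge 1$ (in which case the trivial bound $C_m\le C_m\|w\|$ suffices) or $\|w\|_{\xX(0,T)}<1$ (in which case $\|\widetilde v\|_{\xX_2}$ and $\|v\|_{\xX_2}$ are within $1$ of each other on the support of $\phi_m$, so both are effectively $\lesssim m$ and the usual split into $\phi_m(\|v\|)(\nN(v)-\nN(\widetilde v))$ plus $(\phi_m(\|v\|)-\phi_m(\|\widetilde v\|))\nN(\widetilde v)$ goes through). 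Finally, a minor point on your closing paragraph: the eventual smallness of $\sup_j\|\Delta_j W\|_{L_x^\infty}$ in $L_\omega^\rho$ used downstream comes from $\|\pi^{(n)}\|^\theta$, not from the length $h$ of $[c,d]$.
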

\begin{proof}[Sketch proof of Lemma~\ref{lem: estimatefore}]

The estimate for $e_{1}(a,t)$ is similar to \eqref{eq: pe}. By Strichartz estimate \eqref{eq: stri}, we have
\begin{equation}\label{eq: aaa}
\begin{aligned}
&\|e_{1}(a,t)\|_{\xX(a,b)}\\
\lesssim&\left\| \dws\int_{[s]}^{s} e^{i(s-r)\Delta} \left(\phi_{m}(\|v\|_{\xX_{2}(0,r)}) \nN(v) - \phi_{m}(\|\tilde{v}\|_{\xX_{2}(0,r)}) \nN(\widetilde{v})\right) {\rm d}r \right\|_{L_{s}^{1}L_{x}^{2}[a,b]}\\
\lesssim &\sup_{j}\|\Delta_{j}W\|_{L_{x}^{\infty}} \cdot \left\|\phi_{m}(\|v\|_{\xX_{2}(0,r)}) \nN(v) - \phi_{m}(\|\tilde{v}\|_{\xX_{2}(0,r)}) \nN(\widetilde{v}) \right\|_{L_{t}^{1}L_{x}^{2}(a,b)}\;. 
\end{aligned}
\end{equation}
Thanks to the truncation $\phi_{m}$, one always has
\begin{equation}
\|\phi_{m}(\|v\|_{\xX_{2}(0,r)})\nN(v)\|_{L_{t}^{1}L_{x}^{2}}+\|\phi_{m}(\|\tilde{v}\|_{\xX_{2}(0,r)})\nN(\widetilde{v})\|_{L_{t}^{1}L_{x}^{2}}\leq C_{m}
\end{equation}
If $\|w\|_{\xX(0,T)}\geq 1$, then the desired estimate follows. Otherwise, if $\|w\|_{\xX(0,T)}\leq 1$, we further obtain
\begin{equation}
\|\phi_{m}(\|v\|_{\xX_{2}(0,r)})\widetilde{v}\|_{\xX_2(0,T)} + \|\phi_{m}(\|\tilde{v}\|_{\xX_{2}(0,r)})v\|_{\xX_2(0,T)}\| \leq C_{m}\;. 
\end{equation}
That means that the truncations for $v$ and $\widetilde{v}$ are essentially same. We split the term in concern as 
\begin{equation}
\begin{aligned}
& \|\phi_{m}(\|v\|_{\xX_{2}(0,r)}) \nN(v) - \phi_{m}(\|\tilde{v}\|_{\xX_{2}(0,r)}) \nN(\widetilde{v})\|_{L_{t}^{1}L_{x}^{2}(0,T)}\\
\leq &\|\phi_{m}(\|v\|_{\xX_{2}(0,r)}) (\nN(v)-\nN(\widetilde{v}))\|_{L_{t}^{1}L_{x}^{2}(0,T)} + \|(\phi_{m}(\|v\|_{\xX_{2}[0,r]}) - \phi_{m}(\|\widetilde{v}\|_{\xX_{2}[0,r]})) \nN(\widetilde{v})\|_{L_{t}^{1}L_{x}^{2}(0,T)}\;,
\end{aligned}
\end{equation}
and observe 
\begin{equation}
|\phi_{m}(\|v\|_{\xX_{2}[0,s]})-\phi_{m}(\|\widetilde{v}\|_{\xX_{2}[0,s]})|\lesssim \frac{1}{m} \|w\|_{\xX[0,s]},
\end{equation}
then estimate \eqref{eq: 1e1} follows.

The estimate for $e_{2}$ follows from Strichartz estimate and the observation
\begin{equation}
|\phi_{m}(\|v\|_{\xX_{2}[0,s]})-\phi_{m}(\|\tilde{v}\|_{\xX_{2}[0,s]})|\lesssim \frac{1}{m}\|w\|_{\xX[0,s]}.
\end{equation}
The estimate for $e_{3}$ is similar to that for $e_{1}$ except we replace $\Delta_{j} W$ by $\Delta_{j}U$, and we use the estimate
\begin{equation}
\|\phi_{m}(\|\widetilde{v}\|_{\xX_{2}[0,s]}) \nN(\widetilde{v}(s))\|_{L_{t}^{1}L_{x}^{2}} \lesssim C_{m}.
\end{equation}

For terms $e_{4}$, $e_{5}$, and $e_{6}$, we can treat similarly as in Lemmas~\ref{lem: stable} and~\ref{lem: stabconsou}, but here we do not need to restrict the analysis in small intervals\footnote{Such smallness is not useful here. In some sense, the extra smallness $h^{\gamma}$ is not small enough.}. However, we do observe in all those term, there is one $W$ been replaced by $U$, which gives a $\delta$ in the left hand due to \eqref{eq: enhanced}.
\end{proof}

\subsection{Proof of Lemma~\ref{lem: onemorebootstrap}}
Now we are ready to prove Lemma \ref{lem: onemorebootstrap}, which will conclude the proof of Corollary~\ref{cor: stab}. Note that once $h$ is fixed, we are allowed to take $\delta_{0}$ as small as we want (by choosing $\delta$ even smaller), but we will never use estimates of the form $\delta_{0}\lesssim h^{\alpha}$.

Recall we will need a small universal constant $\eta$. For almost surely every $\omega$, we will take a (random) partition of the interval $[c,T] $ into $c=a_{0}<a_{1}<...a_{J}=T$ such that for every $[a_{l}, a_{l+1}]$, the following properties for $v$ and $\widetilde{v}$ hold:
\begin{itemize}
\item Either $\|v\|_{\xX_{2}(a_{l},a_{l+1})} \leq \eta$ or $\phi_{m}(\|v\|_{\chi_{2}(0,s)}) = 0$ if $s \geq a_{l}$,
\item Either $\|\widetilde{v}\|_{\chi_{2}(a_{l},a_{l+1})} \leq \eta$ or $\phi_{m}(\|\widetilde{v}\|_{\xX_{2}(0,s)})=0$ if $s \geq a_{l}$. 
\end{itemize}
Thus, there can be most $J \sim m^{5}/\eta^{5} + 1 \leq C_{m}$ such intervals. In every such interval $[a_{l}, a_{l+1}]$, we use Strichartz and triangle inequalities to get
\begin{equation}
\|w\|_{\xX(a_{l},a_{l+1})} \lesssim \|w(a_{l})\|_{L_{x}^{2}} + \eta^{4} \|w\|_{\xX_{2}(a_{l}, a_{l+1})} + \sum_{i}\|S_{i}\|_{\xX(a,b)}\;. 
\end{equation}
Plug in the estimate for $e_{i}$'s and use the definition of $S_{i}^{*}(t), \tilde{S}_{i}^{*}$, (also note that the appearance of $e_{2}$ means $\|\widetilde{v}\|_{\xX_{2}(a_{l},a_{l+1})} \lesssim \eta$), we get
\begin{equation}
\begin{aligned}
\|w\|_{\xX(0,a_{l+1})} \lesssim  &\|w\|_{\xX(0,a_{l})} + \eta^{4} \|w\|_{\xX(0,a_{l+1})} + \sum_{i} \tilde{S}^{*}_{i} + \sum_{i} \|S^{*}_{i}\|_{L_{t}^{5}(c,T)}\\
+&C_{m} \sup_{j} \|\Delta_{j}W\|_{L_{x}^{\infty}} + \frac{1}{m} \|w\|_{\xX(0,a_{l+1})} \cdot \eta^{5} + C_{m} \sup_{j}\|\Delta_{j}U\|_{L_{x}^{\infty}}\\
+&\sum_{i=4}^{6} \|e^{*}(t)\|_{L_{t}^{5}(0,1)} + \tilde{e}^{*}_{i}\;. 
\end{aligned}
\end{equation}
Note that when $\eta$ is small enough, the two terms $\eta^{4} \|w\|_{\xX(0,a_{l+1})}$ and $\frac{1}{m} \|w\|_{\xX(0,a_{l+1})} \eta^{5}$ will be absorbed into the left side. Iterating the above formula $\sim C_{m}$ times (possibly with a different value of $C_m$), we get
\begin{equation} 
\begin{aligned}
\|w\|_{\xX(0,T)}&\leq C_{m} \|w\|_{\xX(0,c)} + C_{m} \bigg[ \Big(\sum_{i}\tilde{S}^{*}_{i} + \sum_{i}\|S^{*}_{i}\|_{L_{t}^{5}(c,T)} \Big)
+\sup_{j}\|\Delta_{j}W\|_{L_{x}^{\infty}}\\
&+ \sup_{j}\|\Delta_{j}U\|_{L_{x}^{\infty}} + \Big(\sum_{i=4}^{6}\|e^{*}(t)\|_{L_{t}^{5}(0,1)} +\tilde{e}^{*}_{i} \Big) \bigg]\;. 
\end{aligned}
\end{equation}
Taking $L_{\omega}^{\rho}$-norm on both sides, we obtain
\begin{equation}
\|w\|_{L^{\rho}_{\omega}\xX(0,T)} \leq C_m \left( \|w\|_{\xX(0,c)} + h^{\gamma} \|w\|_{L^{\rho}_{\omega}\xX(0,T)} + \delta + \|\pi^{(n)}\|^{\theta} \right).
\end{equation}
Here, we use the fact there exists $\theta>0$ such that $\|\sup_{j}\|\Delta_{j}W\|\|_{L_{\omega}^{\rho}L_{x}^{\infty}} \lesssim \|\pi^{(n)}\|^{\theta}$. One could choose $\theta=\frac{1}{2}-$. By choosing $h$ small enough according to $C_{m}$, and $\delta, \|\pi^{n}\|$ small enough according to $C_{m}$ and $\delta_{0}$, Lemma~\ref{lem: onemorebootstrap} follows.

\section{Proof of Proposition~\ref{pr:unm_to_um}}\label{sec:pr_WZ}

\subsection{Some reduction by Corollary~\ref{cor: stab}}

By Corollary \ref{cor: stab}, instead of Assumption~\ref{as:noise_initial}, we only need prove Proposition \ref{pr:unm_to_um} for smooth initial data $f\in L_{\omega}^{\infty}H^{1}_{x}$, and to study the noise which is finite dimensional (thanks to the summability condition \eqref{eq:summable}) and smooth in space. We can assume without loss of generality that
\begin{equation}\label{eq: furas}
f\in L_{\omega}^{\infty}H_{x}^{1}, \quad W(t,x) = B_1(t) V_1(x) + B_2(t) V_2(x)\;, 
\end{equation}
where $V_1$ and $V_2$ are Schwartz functions on $\RR$, since this already covers the treatment of both diagonal and cross terms, which will be of no difference to general finite dimensional case. The regularity assumptions on the initial data and noise give some H\"older regularity in time of the flow $u^{(n)}_{m}$ and $u^{m}$, which is essential to establish Wong-Zakai convergence.

Note that we deal with the truncated equation only (with $\phi_m$ in front of the nonlinearity, and all the bounds are allowed to depend on (the fixed) $m$. Again, we use $C_{m}$ to denote a generic constant depending on $m$ but its value may change from line to line. 

Let $w=u_{m}^{(n)}-u_{m}$. Similar to the previous section, we reduce the proof of Proposition \ref{pr:unm_to_um}, with the stronger assumption \eqref{eq: furas}, to the following claim. 

\begin{lem}\label{lem: boot1}
There exists $h>0$ such that when $\delta_{0}$ is small enough, one can always choose $\delta$ sufficiently small depending on $\delta_{0}$, $\rho$, $m$, $V_{1}$, $V_{2}$ and $f$ such that if $\|\pi^{n}\| \leq \delta$, and if one has 
\begin{equation}
\|w\|_{L^{\rho}_{\omega}\chi(0,c)}< \delta_{0}
\end{equation}
and $d-c \sim h$, then one has the bound
\begin{equation}
\|w\|_{\chi(0,d)}\lesssim \delta _{0}\;. 
\end{equation}
\end{lem}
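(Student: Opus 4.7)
The plan is to reduce Lemma~\ref{lem: boot1} to a bootstrap essentially identical to Lemma~\ref{lem: onemorebootstrap}, with the proviso that the ``perturbative'' error terms now encode the Wong--Zakai discrepancy between the piecewise-linear driver $W^{(n)}$ and the Stratonovich driver $\circ\, dW$, rather than the closeness of two different noises. Subtracting the expansion \eqref{eq: workingdu} for $u_m^{(n)}$ from the Duhamel formula \eqref{eq:um_Duhamel} for $u_m$, the difference $w = u_m^{(n)} - u_m$ should satisfy, on $[a,t] \subset [c,d]$,
\begin{align*}
w(t) = &\;e^{i(t-a)\Delta}w(a) - i\int_a^t e^{i(t-s)\Delta}\Big(\phi_m(\|u_m^{(n)}\|_{\xX_2})\nN(u_m^{(n)}) - \phi_m(\|u_m\|_{\xX_2})\nN(u_m)\Big)\,ds\\
&+ \widetilde S_{\mar}(a,t) + \widetilde S_{\qua}(a,t) + E_{\mar}(a,t) + E_{\qua}(a,t) + E_{\appr}(a,t),
\end{align*}
where $\widetilde S_{\mar}, \widetilde S_{\qua}$ are the analogues of $\sS_{\mar}, \sS_{\qua}$ with the integrand $v = u_m^{(n)}$ replaced by $w$, $E_{\appr}$ is the term coming from $\widetilde N(u_m^{(n)};\cdot)$ in \eqref{eq: workingdu}, and
\begin{align*}
E_{\mar}(a,t) &:= \sS_{\mar}(a,t)\big|_{v \mapsto u_m} + i\int_a^t e^{i(t-s)\Delta}\big(u_m(s)\,dW(s)\big),\\
E_{\qua}(a,t) &:= \sS_{\qua}(a,t)\big|_{v \mapsto u_m} + \tfrac12 \int_a^t e^{i(t-s)\Delta}\big(\V^2 u_m(s)\big)\,ds
\end{align*}
are the two genuine Wong--Zakai errors.

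The $w$-linear pieces (the nonlinear difference, $\widetilde S_{\mar}$, $\widetilde S_{\qua}$) are handled exactly as in Section~\ref{sec:cor_stab}: partition $[c,T]$ into $\le C_m$ subintervals on which $\|u_m^{(n)}\|_{\xX_2}$ and $\|u_m\|_{\xX_2}$ are both $\le \eta$ (else the truncation vanishes), apply Strichartz, absorb $O(\eta^4)\|w\|_\xX$ into the left-hand side, and use the analogues of Lemmas~\ref{lem: stabconsou} and~\ref{lem: stable} for $w$ in place of $v$ to extract a factor $h^\gamma$ on each piece. The term $E_{\appr}$ is handled by the mechanism of Lemma~\ref{lem: bootstrap}: the $\|\pi^{(n)}\|^{-1/2}$ from $\dws$ is beaten by the $\|\pi^{(n)}\|^{1/2}$ coming from integrating the (truncated, hence bounded-in-$L_x^2$) nonlinearity over an interval of length $\le \|\pi^{(n)}\|$, leaving an overall factor of $\|\pi^{(n)}\|^{1/2} C_m$ which can be made as small as needed by shrinking $\delta$.

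The main work, and the step I expect to be the chief obstacle, is to show
\[
\|E_{\mar}\|_{L_\omega^\rho \xX(c,d)} + \|E_{\qua}\|_{L_\omega^\rho \xX(c,d)} \lesssim_{m, f, V_1, V_2} \|\pi^{(n)}\|^\theta
\]
for some $\theta > 0$. This is precisely where the additional hypothesis \eqref{eq: furas} enters: smooth initial data and smooth finite-dimensional noise force $u_m$ to be H\"older-$(\tfrac12)^-$-continuous in $L_x^2$ (by Burkholder applied to \eqref{eq:um_Duhamel} together with Strichartz), an input that is both necessary and sufficient for a Wong--Zakai-type convergence in a dispersive setting. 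For $E_{\mar}$, swapping $u_m([s])$ for $u_m(s)$ by H\"older continuity and then recognising the resulting object as a Riemann-sum approximation of the It\^o integral $\int e^{i(t-s)\Delta}(u_m\,dW)$ yields an error of size $\|\pi^{(n)}\|^{1/2^-}$ in $L_\omega^\rho\xX$ via Burkholder and the dispersive bound. For $E_{\qua}$, the key deterministic computation is that, after replacing $e^{i(s-r)\Delta}$ by the identity (error $O(\|\pi^{(n)}\|)$ on smooth quantities) and freezing $u_m(r) \mapsto u_m([s])$,
\[
\int_{t_j}^{t_{j+1}}\frac{(\Delta_j W)^2}{(t_{j+1}-t_j)^2}(s - t_j)\,ds = \tfrac12(\Delta_j W)^2,
\]
while, since $W = V_1 B_1 + V_2 B_2$ with smooth $V_1, V_2$, the quadratic-variation identity $\sum_j (\Delta_j W)^2 \to \int_0^t \V^2\,ds$ holds in every $L_\omega^\rho$ with a polynomial rate in $\|\pi^{(n)}\|$ (the diagonal sum by the standard Brownian calculation, the cross term $2 V_1 V_2 \sum_j \Delta_j B_1 \Delta_j B_2$ by independence). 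Together these produce exactly the It\^o--Stratonovich correction $-\tfrac12 \int e^{i(t-s)\Delta}\V^2 u_m\,ds$ in the limit. Collecting all estimates, taking $L_\omega^\rho$-norms, and iterating across the $\le C_m$ subintervals yields a closed inequality
\[
\|w\|_{L_\omega^\rho \xX(0,T)} \leq C_m\Big(\|w\|_{L_\omega^\rho \xX(0,c)} + h^\gamma \|w\|_{L_\omega^\rho \xX(0,T)} + \|\pi^{(n)}\|^\theta\Big),
\]
and the bootstrap \eqref{eq: onemoreas}--\eqref{eq: onemorebootstrap}-style closure happens once $h$ is small relative to $C_m$ and $\delta$ is small enough that $\|\pi^{(n)}\|^\theta \le \delta_0$.
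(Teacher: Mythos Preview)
The proposal is correct and takes essentially the same approach as the paper. Your terms $\widetilde S_{\mar}, \widetilde S_{\qua}, E_{\appr}, E_{\mar}, E_{\qua}$ correspond (up to boundary pieces over $[a,t_{j(a)+1}]$ and $[[t],t]$, which the paper isolates separately as $g_2,g_3$) to the paper's $M_1, M_2, g_4, M_3, g_1$; the paper likewise invokes $H^1$ persistence (Propositions~\ref{pr:unm_H1}--\ref{pr:unm_H1_time_cont}) to obtain H\"older continuity of $u_m$, handles $E_{\qua}$ via the mean-zero martingale structure of $\delta_{ii'}-\frac{\Delta_jB_i\Delta_jB_{i'}}{t_{j+1}-t_j}$ (your quadratic-variation computation), and arrives at the identical closed inequality $\|w\|_{L^\rho_\omega\xX(0,T)}\le C_m(\|w\|_{L^\rho_\omega\xX(0,c)}+h^\theta\|w\|_{L^\rho_\omega\xX(0,T)}+\|\pi^{(n)}\|^\theta)$.
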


Note that $w(0)=0$, and hence Proposition~\ref{pr:unm_to_um} follows from iterating the above lemma. Lemma~\ref{lem: boot1} in turn follows from the following bootstrap Lemma. 

\begin{lem}\label{lem: lll}
Given the assumptions of Lemma \ref{lem: boot1}, one can find $C > 0$ (allowed to depend on $m$) such that if one has the bootstrap assumption
\begin{equation}\label{eq: twomoreas}
\|w(t)\|_{L_{\omega}^{\rho}\xX[0,T]}\leq 2C\delta_{0}\;, \quad T \in [c,d]\;, 
\end{equation}
then one has the bootstrap estimate
\begin{equation}\label{eq: twomorebootstrap}
\|w(t)\|_{L_\omega^{\rho}\xX[0,T]}\leq C\delta_{0}.
\end{equation}
\end{lem}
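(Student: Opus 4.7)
The plan is to follow the scheme of the proof of Lemma~\ref{lem: onemorebootstrap} in Section~\ref{sec:cor_stab}, but with the essential new ingredient of matching the Wong--Zakai Riemann sum directly against the It\^o integral plus the Stratonovich correction. Setting $w=u_m^{(n)}-u_m$ and subtracting the Duhamel formula~\eqref{eq:um_Duhamel} from the expansion~\eqref{eq: workingdu} (on a subinterval $[a,b]\subset[c,d]$), I would organise the resulting expression into four groups: (i) a nonlinearity difference $-i\int_a^t e^{i(t-s)\Delta}[\phi_m(\|u_m^{(n)}\|_{\xX_2(0,s)})\nN(u_m^{(n)})-\phi_m(\|u_m\|_{\xX_2(0,s)})\nN(u_m)]\,ds$; (ii) the small remainder $-\int_a^t e^{i(t-s)\Delta}\widetilde N(u_m^{(n)};s)\,ds$; (iii) the martingale difference $\sS_\mar(a,t)+i\int_a^t e^{i(t-s)\Delta}(u_m\,dW)$; and (iv) the quadratic difference $\sS_\qua(a,t)+\tfrac12\int_a^t e^{i(t-s)\Delta}(\V^2 u_m)\,ds$.

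Groups (i) and (ii) are handled by the same arguments as in Section~\ref{sec:cor_stab}: on sub-intervals where $\|u_m^{(n)}\|_{\xX_2}$ and $\|u_m\|_{\xX_2}$ are $\leq\eta$ (or where both truncations vanish), Strichartz and the local Lipschitz behaviour of $\phi_m(\cdot)\nN(\cdot)$ produce $O(\eta^4\|w\|_{\xX}+m^{-1}\|w\|_{\xX})$ on each sub-interval, while (ii) is of size $O(\|\pi^{(n)}\|^{1/2})$ by the pointwise estimate~\eqref{eq: pe}.

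Group (iv) is where the Stratonovich correction enters. Using the maximal arguments behind~\eqref{eq: sqmax} together with Kolmogorov regularity of $W$, I would replace the inner propagator $e^{i(s-r)\Delta}$ inside $\sS_\qua$ by the identity modulo an $O(\|\pi^{(n)}\|^\gamma)$ error in $L^5_tL^{10}_x$ (as in Remark~\ref{remark: exs2}), reducing $\sS_\qua$ to $-\tfrac12\int_a^t e^{i(t-s)\Delta}\bigl(\tfrac{|\Delta_{j(s)}W|^2}{t_{j(s)+1}-t_{j(s)}}u_m^{(n)}(s)\bigr)\,ds$ up to a controlled remainder. Substituting $u_m^{(n)}=u_m+w$ and applying a discrete Burkholder to the centred squared Brownian increments $\sum_j\bigl(|\Delta_jW|^2-\V^2(t_{j+1}-t_j)\bigr)\mathbf{1}_{[t_j,t_{j+1})}$ then matches the surviving term with $-\tfrac12\int e^{i(t-s)\Delta}(\V^2 u_m)\,ds$ up to a contribution of size $O(\|\pi^{(n)}\|^\gamma)+O(h^\gamma\|w\|_{L^\rho_\omega\xX(0,T)})$.

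Group (iii) is the main obstacle. After freezing $e^{i(t-s)\Delta}$ to $e^{i(t-t_j)\Delta}$ on each partition interval (incurring an $O(\|\pi^{(n)}\|^\gamma)$ error by the dispersive bounds used in the proof of~\eqref{eq: p1es}), the comparison becomes a discrete martingale with increments of the form $e^{i(t-t_j)\Delta}\bigl((u_m^{(n)}(t_j)-u_m(s))\,\Delta_jW\bigr)$, $s\in[t_j,t_{j+1}]$. The Burkholder-type bounds~\eqref{eq: disburkholder}--\eqref{eq: duhburkholder} reduce matters to bounding $\|u_m^{(n)}(t_j)-u_m(s)\|_{L^2_x}$, which I split as $\|u_m^{(n)}(t_j)-u_m(t_j)\|_{L^2_x}+\|u_m(t_j)-u_m(s)\|_{L^2_x}$. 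The first piece is absorbed using the bootstrap assumption~\eqref{eq: twomoreas} with a factor $h^\gamma$; the second requires H\"older-in-$t$ regularity of $u_m$ in $L^2_x$, which is precisely why the reduction~\eqref{eq: furas} to $H^1_x$ data and smooth noise is indispensable (the free flow $e^{it\Delta}f$ is not H\"older in $L^2_x$ for generic $f\in L^2_x$, but it is for $f\in H^1_x$, and the remaining terms in the Duhamel formula for $u_m$ preserve this regularity up to constants $C_m$). Summing over the $\sim C_m$ sub-intervals constructed as in Section~\ref{sec:cor_stab} and taking $L^\rho_\omega$-norms gives $\|w\|_{L^\rho_\omega\xX(0,T)}\leq C_m\bigl(\delta_0+h^\gamma\|w\|_{L^\rho_\omega\xX(0,T)}+\|\pi^{(n)}\|^\gamma\bigr)$; choosing $h$ small depending on $C_m$ and then $\|\pi^{(n)}\|\leq\delta$ small depending on $C_m$ and $\delta_0$ closes the bootstrap~\eqref{eq: twomorebootstrap}.
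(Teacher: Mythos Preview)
Your proposal is correct and follows essentially the same scheme as the paper: partition $[c,T]$ into $O(C_m)$ sub-intervals where the $\xX_2$-norms are small (or the truncation vanishes), handle the nonlinearity difference and the $\widetilde N$ remainder perturbatively, match the discretised stochastic terms against the It\^o integral plus Stratonovich correction via Burkholder and H\"older-in-time regularity, iterate, and take $L^\rho_\omega$-norms. The paper packages the stochastic matching into the explicit error terms $M_1,M_2,M_3,g_1,\dots,g_4$ of Lemma~\ref{lem:esfornewe}, but your groups (iii) and (iv) amount to the same computation.

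One correction is needed. In groups (iii) and (iv) you justify freezing $e^{i(t-s)\Delta}\to e^{i(t-t_j)\Delta}$ and $e^{i(s-r)\Delta}\to\id$ by ``the dispersive bounds used in the proof of~\eqref{eq: p1es}'' and ``Kolmogorov regularity of $W$ / Remark~\ref{remark: exs2}''. Neither of these yields time-continuity of $t\mapsto e^{it\Delta}\psi$ in $L^2_x$ for general $\psi\in L^2_x$; the correct tool is Lemma~\ref{le:semi_group_cont}, which trades the $H^1_x$ regularity of $u_m$ and $u_m^{(n)}$ (available via Proposition~\ref{pr:unm_H1} after the reduction~\eqref{eq: furas}) for the required $O(\|\pi^{(n)}\|^\gamma)$ gain. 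You clearly understand this mechanism --- you invoke it correctly for the $u_m(t_j)-u_m(s)$ piece in group (iii) --- so this is a misattribution rather than a structural gap. Note also that the paper's treatment of the martingale comparison (its term $M_3$) is slightly cleaner than your freezing route: it adds and subtracts $u_m([s])$ \emph{inside} the It\^o integral, so that Burkholder applies directly to $\int e^{i(t-s)\Delta}\bigl((u_m(s)-u_m([s]))\,dW\bigr)$ without ever touching the outer propagator.
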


It then remains to prove Lemma~\ref{lem: lll}, for which the rest of this section is devoted to.

\subsection{Well-posedness results in $H^1$}

Before we start proving Lemma~\ref{lem: lll}, we need some further well-posedness for $u_{m}$ and $u_{m}^{(n)}$ in high regularity space. We start with he following two propositions on the persistence of regularity. 

\begin{prop} \label{pr:unm_H1}
	With enhanced assumption \eqref{eq: furas}, there exists $C=C \big(m, \rho, \Lambda_\ini, \Lambda_\noi', \|f\|_{L_{\omega}^{\rho}H_x^1} \big)$ such that
	\begin{equation} \label{eq:unm_H1}
	\sup_{n} \|u^{(n)}_m\|_{L_{\omega}^{\rho}L_{t}^{\infty}H_x^1} \leq C\;, \qquad \|u_m\|_{L_{\omega}^{\rho} L_{t}^{\infty} H_x^1} \leq C. 
	\end{equation}
\end{prop}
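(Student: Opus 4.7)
The plan is to differentiate the Duhamel formulas \eqref{eq:um_Duhamel} (for $u_m$) and \eqref{eq:unm} (for $u_m^{(n)}$) in the space variable, and then close a Strichartz-type estimate on $\nabla u_m$ and $\nabla u_m^{(n)}$ analogous to the one already developed for $u_m^{(n)}$ itself in Section~\ref{sec:unm_bd}. The structural reason this should work is that on the support of $\phi_m(\|u\|_{\xX_2(0,\cdot)})$ one has $\|u\|_{\xX_2(0,t)} \leq 2m$, and after differentiation the nonlinear term is schematically $\phi_m \cdot |u|^4 \nabla u$, which is \emph{linear} in $\nabla u$ with coefficient controlled by powers of $m$ only.

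For $u_m$, I would apply $\nabla$ to \eqref{eq:um_Duhamel}. The new contributions from differentiating $u_m\,\mathrm{d}W$ and $\V^2 u_m$ take the form $u_m \nabla V_k\,\mathrm{d}B_k$ and $(\nabla \V^2) u_m$ respectively, both controlled via $\|u_m\|_{\xX_1} \lesssim 1$ together with the smoothness of $V_k$ assumed in \eqref{eq: furas}. The nonlinear contribution is bounded by $\|\phi_m |u_m|^4 \nabla u_m\|_{L_t^{5/4}L_x^{10/9}} \leq \|u_m\|_{\xX_2}^4 \|\nabla u_m\|_{\xX_2}$ via H\"older---linear in $\nabla u_m$ but with a prefactor $(2m)^4$ too large to close directly. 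To circumvent this, I would partition $[0,1]$ pathwise into subintervals $I_1 = [a_1,a_2],\dots,I_K = [a_K,a_{K+1}]$ chosen so that $\|u_m\|_{\xX_2(I_k)} \leq \eta$ for a small universal $\eta$; by definition of $\phi_m$, one has $K \leq (2m/\eta)^5$ almost surely. On each $I_k$, Strichartz combined with Burkholder \eqref{eq: duhburkholder} applied to the It\^o term should yield
\begin{equation*}
\|\nabla u_m\|_{L_\omega^\rho \xX(I_k)} \lesssim \|\nabla u_m(a_k)\|_{L_\omega^\rho L_x^2} + \eta^4 \|\nabla u_m\|_{L_\omega^\rho \xX(I_k)} + \|u_m\|_{L_\omega^\rho \xX(I_k)};
\end{equation*}
for $\eta$ small the second term is absorbed, and iterating over the $K$ intervals closes the bound with a constant $C^K \lesssim_m 1$.

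For $u_m^{(n)}$, the same differentiation gives a pathwise PDE for $\nabla u_m^{(n)}$ driven by $\mathrm{d}W^{(n)}/\mathrm{d}t$ (acting on $\nabla u_m^{(n)}$) plus a forcing $u_m^{(n)} \cdot \mathrm{d}\nabla W^{(n)}/\mathrm{d}t$. The small-$\xX_2$ partition works as before; what is different is handling the noise contribution $\dws$ applied now to $\nabla u_m^{(n)}$. For this, I would import the type-A/type-B partition of Section~\ref{sec:unm_bd}: on type-A intervals (where $\|\Delta_j W\|$ is large---a rare event) use a direct perturbative Strichartz bound in the spirit of Lemma~\ref{le: lemlosc} applied to $\nabla u_m^{(n)}$; on type-B intervals, use the martingale and small-oscillation maximal estimates developed there for $\sS_{\mar}^*$ and $\sS_{\qua}^*$, but now with $V_k$ replaced by $\nabla V_k$ (still bounded in $L_x^1 \cap L_x^\infty$ by \eqref{eq: furas}).

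The main obstacle will be combining cleanly the two different partitions: a deterministic one into $\eta$-small-$\xX_2$ pieces (needed to linearise the nonlinearity) and the random one into type-A/type-B pieces (needed to tame the non-smallness of $\Delta_j W$). Since both partitions contain only $\lesssim C_m$ intervals with overwhelming probability---thanks to Lemma~\ref{lem: finitenumber} for the latter and the cutoff $\phi_m$ for the former---their common refinement still has $\lesssim C_m$ intervals, so iterating the one-interval bound closes the argument uniformly in $n$.
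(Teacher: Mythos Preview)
Your overall strategy is the right one and coincides with what the paper does: differentiate the Duhamel formula, observe that the truncation $\phi_m$ makes the nonlinear coefficient $|u_m|^4$ globally bounded (hence the equation for $\nabla u_m$ is effectively linear with controlled coefficients), partition into subintervals where $\|u_m\|_{\xX_2}$ is small so that the nonlinear contribution can be absorbed, and for $u_m^{(n)}$ graft this onto the local Duhamel expansion of Section~\ref{sec:unm_bd}. The paper's proof is only a pointer to \cite[Proposition~3.2]{snls_mass_critical} plus the remark that the $u_m^{(n)}$ case follows by the same procedure together with the Section~\ref{sec:unm_bd} machinery, so your sketch is in fact more detailed than the paper's.

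There is, however, a genuine gap in your one-interval estimate for $u_m$. After differentiating, the It\^o term splits as $(\nabla u_m) V_k\, {\rm d}B_k + u_m (\nabla V_k)\, {\rm d}B_k$, and similarly the correction term produces $\V^2 \nabla u_m + (\nabla \V^2) u_m$. You only display the second pieces (which are forcing, bounded via $\|u_m\|_{\xX}$); the first pieces are \emph{linear in $\nabla u_m$} and do not appear anywhere on the right-hand side of your displayed estimate. Burkholder \eqref{eq: duhburkholder} applied to $\int_{a_k}^{t} e^{i(t-s)\Delta}\big((\nabla u_m) V_k\big)\, {\rm d}B_k$ gains $|I_k|^{\gamma}$, not $\eta^4$, and your partition criterion $\|u_m\|_{\xX_2(I_k)} \leq \eta$ does not force $|I_k|$ to be small (an interval on which $u_m$ is nearly a free wave could have length of order one). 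Thus as written the stochastic part cannot be absorbed. The easy fix is to add $|I_k| \leq h$ as a second partition constraint; the number of pieces becomes at most $(2m/\eta)^5 + 1/h$, still a deterministic $C_m$, and now both the $\eta^4$ and $h^{\gamma}$ contributions absorb simultaneously. This is exactly the two-level iteration (outer deterministic in $h$, inner random in $\eta$) that the paper runs in Section~\ref{sec:cor_stab}.

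A related technical issue: writing $\|\cdot\|_{L_\omega^\rho \xX(I_k)}$ with random endpoints $a_k$ is not directly meaningful, and Burkholder cannot be applied interval-by-interval with random $I_k$ in this form. The correct order of operations---visible in Section~\ref{sec:cor_stab} (see \eqref{eq: onemorestable} and Lemma~\ref{lem: stable})---is to introduce maximal quantities $\sup_{a}\|\cdot\|$ for the stochastic pieces, carry out the pathwise iteration over the random $\eta$-partition at the deterministic level, and take $L_\omega^\rho$ only at the very end. With these two adjustments your plan goes through.
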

\begin{proof}
	The bound for $u_m$ is proved in \cite[Proposition~3.2]{snls_mass_critical}. The uniform-in-$n$ bound for $\{u^{(n)}_{m}\}$ can be proved by following exactly the same procedure as that for $u_m$ and by further local Duhamel expansion as in Section~\ref{sec:unm_bd}. We omit the details here. 
\end{proof}

\begin{prop} \label{pr:unm_H1_time_cont}
	Let $\alpha \in (0,1)$. For every $m$ and $\rho$, there exists $C=C \big( m, \rho,\Lambda_\ini, \Lambda_{\noi}'\big)$ such that
	\begin{equation}
	\|u^{(n)}_{m}(t) - u^{(n)}_m(s)\|_{L_{\omega}^{\rho}H_x^\alpha} \leq C |t-s|^{\frac{1-\alpha}{2}} \Big( \|u^{(n)}_{m}\|_{L_{\omega}^{\rho}L_{t}^{\infty} H_{x}^{1}} + \|u^{(n)}_{m}\|_{L_{\omega}^{5\rho}L_{t}^{\infty}H_{x}^{1}}^{5} \Big). 
	\end{equation}
	As a consequence, we have
	\begin{equation}
	\|u^{(n)}_{m}\|_{L_{\omega}^{\rho} \cC_{t}^{\beta} H_{x}^{\alpha}} \leq C \Big( \|u^{(n)}_{m}\|_{L_{\omega}^{\rho}L_{t}^{\infty} H_{x}^{1}} + \|u^{(n)}_{m}\|_{L_{\omega}^{5\rho}L_{t}^{\infty}H_{x}^{1}}^{5} \Big) \leq C
	\end{equation}
	for every $\beta < \frac{1-\alpha}{2}$. In the second claim, the constant $C$ also depends on $\beta$. The same bounds are true for $u_m$. 
\end{prop}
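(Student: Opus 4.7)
My plan is to establish the displayed estimate by interpolation, reducing matters to a H\"older-$\tfrac{1}{2}$ bound in $L_x^2$, and then invoke Kolmogorov's continuity criterion (Proposition~\ref{prop: kc}) for the $\cC_t^\beta H_x^\alpha$ bound. Combining a H\"older-$\tfrac{1}{2}$ estimate
\begin{equation*}
\|u^{(n)}_m(t) - u^{(n)}_m(s)\|_{L_\omega^\rho L_x^2} \lesssim |t-s|^{1/2}\bigl(\|u^{(n)}_m\|_{L_\omega^{C\rho}L_t^\infty H_x^1} + \|u^{(n)}_m\|_{L_\omega^{5C\rho}L_t^\infty H_x^1}^5\bigr)
\end{equation*}
with the trivial bound $\|u^{(n)}_m(t) - u^{(n)}_m(s)\|_{L_\omega^{\rho'}H_x^1} \le 2\|u^{(n)}_m\|_{L_\omega^{\rho'}L_t^\infty H_x^1}$, and applying the interpolation $\|f\|_{H_x^\alpha} \le \|f\|_{L_x^2}^{1-\alpha}\|f\|_{H_x^1}^\alpha$ realized in $L_\omega^\rho$ through H\"older's inequality in $\omega$, yields the first estimate after harmlessly readjusting the $\omega$-exponents (permitted since all constants are allowed to depend on $\rho$).

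The main work is the $L_x^2$ H\"older estimate, and the main obstacle is the noise integral. The obvious route via Duhamel plus pointwise bound fails because $\|\dot W^{(n)}\|_{L_x^\infty}$ typically scales like $\|\pi^{(n)}\|^{-1/2}$, so $\int_s^t \|\dot W^{(n)}\|_{L_x^\infty}dr$ does not remain bounded uniformly in $n$. I would therefore introduce the gauge transformation $\tilde u := u^{(n)}_m e^{iW^{(n)}}$, which by a direct chain-rule computation satisfies
\begin{equation*}
i\partial_t \tilde u + \Delta \tilde u = \phi_m\bigl(\|\tilde u\|_{\xX_2(0,t)}\bigr) N(\tilde u) + 2i\nabla\tilde u\cdot\nabla W^{(n)} + i\tilde u\,\Delta W^{(n)} + \tilde u\,|\nabla W^{(n)}|^2,
\end{equation*}
where the truncation factor is unchanged because $|e^{iW^{(n)}}|\equiv 1$. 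Crucially, this equation contains no $\dot W^{(n)}$: the right-hand side involves only $W^{(n)}$ itself and its spatial derivatives, which are time-continuous random fields uniformly bounded in $L_\omega^\rho L_t^\infty W_x^{k,\infty}$ thanks to the Schwartz assumption on $V_1,V_2$ in \eqref{eq: furas}.

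Applying the Duhamel formula to $\tilde u$ between $s$ and $t$ and bounding each term in $L_x^2$ gives $|t-s|^{1/2}\|\tilde u(s)\|_{H_x^1}$ from the linear part (via the standard Fourier multiplier estimate $\|(e^{i\tau\Delta}-I)f\|_{L_x^2}\lesssim|\tau|^{1/2}\|f\|_{H_x^1}$), together with $|t-s|$-contributions from the nonlinearity and the $W^{(n)}$-coefficient terms, each bounded by $|t-s|^{1/2}$ on $[0,1]$. The $H_x^1$ norm of $\tilde u$ is equivalent to that of $u^{(n)}_m$ up to factors of $(1+\|\nabla W^{(n)}\|_{L_x^\infty})$, which have all moments. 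To transfer back, one uses
\begin{equation*}
u^{(n)}_m(t) - u^{(n)}_m(s) = [\tilde u(t) - \tilde u(s)]\,e^{-iW^{(n)}(t)} + \tilde u(s)\,\bigl[e^{-iW^{(n)}(t)}-e^{-iW^{(n)}(s)}\bigr].
\end{equation*}
The first term inherits the bound since $|e^{-iW^{(n)}}|\equiv 1$, while the second is controlled pointwise by $|\tilde u(s)|\cdot|W^{(n)}(t,\cdot)-W^{(n)}(s,\cdot)|$, contributing $|t-s|^{1/2}$ in $L_\omega^\rho L_x^\infty$ by Brownian H\"older regularity (cf.~\eqref{eq: kccontorlosc}).

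For the analogous statement for $u_m$, the same scheme applies except that the noise integral is handled directly from the It\^o formula \eqref{eq:um_Duhamel} via Burkholder's inequality \eqref{eq: burkholder}, which gives a $|t-s|^{1/2}$ bound in $L_\omega^\rho L_x^2$ without any gauge transformation; the It\^o--Stratonovich correction contributes an extra $|t-s|$ term that is absorbed in the same way. Finally, the $\cC_t^\beta H_x^\alpha$ bound is obtained by applying Proposition~\ref{prop: kc} with $\yY = H_x^\alpha$, choosing $q$ large enough that $\beta < \tfrac{1-\alpha}{2} - \tfrac{1}{q}$, and using the first estimate with $\rho = q$ as input.
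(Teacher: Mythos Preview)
Your proposal is correct and takes a genuinely different route from the paper. The paper works directly in $H_x^\alpha$: it expands $u_m(t)-u_m(s)$ by Duhamel, uses the multiplier bound $\|(e^{i\tau\Delta}-I)\psi\|_{H_x^\alpha}\lesssim|\tau|^{(1-\alpha)/2}\|\psi\|_{H_x^1}$ for the free part, bounds the nonlinear and correction terms by $|t-s|$ using that $H_x^1(\RR)$ is an algebra, and applies Burkholder for the It\^o integral; it only writes out the $u_m$ case and leaves $u_m^{(n)}$ implicit (the intended argument being the martingale decomposition of Section~\ref{sec:unm_bd}). Your approach instead interpolates an $L_x^2$ H\"older-$\tfrac12$ estimate against the trivial $H_x^1$ bound, and for $u_m^{(n)}$ you remove the singular $\dot W^{(n)}$ by the gauge substitution $\tilde u=u_m^{(n)}e^{iW^{(n)}}$, which is precisely the rescaling trick of \cite{snls_critical_Zhang} and is available here because of the Schwartz assumption \eqref{eq: furas} on $V_1,V_2$. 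This makes the $u_m^{(n)}$ case fully explicit and arguably cleaner than redoing the Section~\ref{sec:unm_bd} expansion, at the cost of needing smooth noise (which the paper's direct route does not strictly require). Two small points: first, the H\"older exponents in $\omega$ you obtain after interpolation are not literally $\rho$ and $5\rho$ as displayed, though this is harmless since Proposition~\ref{pr:unm_H1} controls all moments and only the final bound $\leq C$ is used downstream; second, the exact $|t-s|^{1/2}$ for the term $\tilde u(s)[e^{-iW^{(n)}(t)}-e^{-iW^{(n)}(s)}]$ in $L_\omega^\rho$ does hold, since one checks directly that $\mathrm{Var}\,(B_k^{(n)}(t)-B_k^{(n)}(s))\le|t-s|$ for piecewise-linear interpolation, so no $\varepsilon$-loss is needed there.
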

\begin{proof}
	Again, for simplicity, we prove the bounds for $u_m$ only. For every $s<t$, we have
	\begin{equation}
	\begin{split}
	u_m(t) - u_m(s) = &\big(e^{i(t-s)\Delta} u_m(s) - u_m(s)\big) - i \int_{s}^{t} e^{i(s-r)\Delta} \Big( \phi_m \big( \|u_m\|_{\xX_2(0,r)} \big) \nN \big( u_m(r) \big) \Big) {\rm d}r\\
	&- i \int_{s}^{t} e^{i(t-r)\Delta} \big( u_m(r) {\rm d} W(s) \big) - \frac{1}{2} \sum_{k} \int_{s}^{t} e^{i(t-r)\Delta} \big( V_k^2 u_m(r) \big) {\rm d}r. 
	\end{split}
	\end{equation}
	The last three terms above can be controlled directly via dispersive estimates and in the stochastic integral case, also with Burkholder inequality. To bound the first term, one really uses the flow being in $H^1$, so that
	\begin{equation}
	\|e^{i(t-s)\Delta}u_m(s) - u_m(s)\|_{H_x^\alpha} \leq C |t-s|^{\frac{1-\alpha}{2}} \|u_m(s)\|_{H_x^1}. 
	\end{equation}
	Hence, one get the desired bound on $\|u_m(t) - u_m(s)\|_{L_{\omega}^{\rho}H_x^\alpha}$. The bound for $\|u_m\|_{L_{\omega}^{\rho}\cC_t^\beta H_x^\alpha}$ follows from Kolmogorov criteria. 
\end{proof}

\subsection{Equation for $u_{m}$ and $w=u_{m}^{(n)}-u_{m}$, and a collection of estimates}

We rewrite the equation for $u_{m}$ so that it will be suitable to do comparison of to $u_{m}^{n}$. Again, we short $t_j$ for $t_j^{(n)}$, and let $[s]$ and $j(s)$ be the same as the previous sections. We denote $u_{n}^{m}$ by $v$ and denote $u_{m}$ by $\widetilde{v}$.  We will present the arguments with consistent notations in the previous sections. 

Note that $v$ solves \eqref{eq: fullexpansion} except that we need to use $\phi_{m}(\|v\|_{\xX_{2}(0,\tau)})(\nN(v(\tau)))$ to replace $\nN(v(\tau))$.

We recall \eqref{eq:um_Duhamel}, and rewrite the equation of $\widetilde{v} = u_m$ as a perturbation of the equation which $v=u^{(n)}_{m}$ satisfies. We have
\begin{equation}\label{eq:mimicumn}
\begin{aligned}
\widetilde{v}(t) = &e^{it\Delta} \widetilde{v}(a) - i \int_{a}^{t} e^{i(t-s)\Delta} \left(\phi_{m}(\|\tv\|_{\xX_{2}(0,s)}) \nN(\tv(s)) \right) {\rm d}s\\
&-i \int_{a}^{t} e^{i(t-s)\Delta} \left(\dws \cdot e^{i(s-[s])\Delta}\tv([s]) \right) {\rm d}s\\
&- \int_{a}^{t} e^{i(t-s)\Delta} \left[ \dws \int_{[s]}^{s}e^{i(s-r)\Delta} \left( \frac{\Delta_{j(s)}W}{t_{j(s)+1}-t_{j(s)}}\tv(r) \right) {\rm d}r \right] {\rm d}s\\
&\pm i \int_{t_{j(a)+1}}^{[t]} e^{i(t-s)\Delta} \left( \left( \tv(s)-\tv([s]) \right) {\rm d} W(s) \right)\\
&\pm \frac{1}{2} \int_{t_{j(a)+1}}^{[t]} e^{i(t-s)\Delta} \left[ \V^{2}\tv(s)- \frac{2 \Delta_{j(s)}W}{t_{j(s)+1}-t_{j(s)}}\int_{[s]}^{s} e^{i(s-r)\Delta} \left( \frac{\Delta_{j(s)}W}{t_{j(s)+1}-t_{j(s)}}\tv(r) \right) {\rm d}r \right] {\rm d}s\\
&\pm i\int_{a}^{t_{j(a)+1}}e^{i(t-s)\Delta} \left(\tv(s) {\rm d}W(s) \right) \pm \int_{[t]}^{t} e^{i(t-s)\Delta} \left( \tv(s) {\rm d} W(s) \right)\\
&\pm \frac{1}{2}\int_{a}^{t_{j(a)+1}} e^{i(t-s)\Delta} \left( \V^{2} \tv(s) \right) {\rm d}s +\pm \frac{1}{2} \int_{[t]}^{t} e^{i(t-s)\Delta} \left( \V^{2} \tv(s) \right) {\rm d}s\;. 
\end{aligned}
\end{equation}
Note that the above is the expansion of $\widetilde{v} = u_m$ (which satisfies \eqref{eq:um_Duhamel}) in terms of the Duhamel formula for $v = u_m^{(n)}$ in \eqref{eq: fullexpansion} with some additional error terms. The error terms are those with the $\pm$ signs above. They will be treated in a purely perturbative way, and the actual signs will not matter. We use $\V^{2}$ to denote $V_{1}^{2}+V_{2}^{2}$.

Recall that $w:=v-\tilde{v}=u_{m}^{(n)}-u_{m}$, we have
\begin{equation}\label{eq:finallongeq}
\begin{aligned}
w(t) &= e^{it\Delta} w(a) - i \int_{a}^{t} e^{i(t-s)\Delta} \left(\phi_{m}(\|v\|_{\xX_{2}(0,s)})\nN(v(s)) - \phi_{m}(\|\tv\|_{\xX_{2}(0,s)}) \nN(\tv(s)) \right) {\rm d}s\\
&-i \int_{a}^{t} e^{i(t-s)\Delta} \left( \dws \cdot e^{i(s-[s])\Delta} w([s]) \right) {\rm d}s\\
&- \int_{a}^{t} e^{i(t-s)\Delta} \left[ \dws \int_{[s]}^{s} e^{i(s-r)\Delta} \left( \dws \cdot w(r) \right) {\rm d}r \right] {\rm d}s\\
&\pm i \int_{t_{j(a)+1}}^{[t]} e^{i(t-s)\Delta} \left( (\tv(s)-\tv([s])) {\rm d} W(s) \right)\\
&\pm \frac{1}{2} \int_{t_{j(a)+1}}^{[t]} e^{i(t-s)\Delta} \left[ \V^{2}\tv(s)- \frac{2 \Delta_{j(s)} W}{t_{j(s)+1}-t_{j(s)}} \int_{[s]}^{s} e^{i(s-r)\Delta}\left( \frac{\Delta_{j(s)}W}{t_{j(s)+1}-t_{j(s)}}\tv(r)\right) {\rm d}r \right] {\rm d} s\\
&\pm i \int_{a}^{t_{j(a)+1}} e^{i(t-s)\Delta} \left( w(s) {\rm d} W(s) \right) \pm \int_{[t]}^{t} e^{i(t-s)\Delta} \left( w(s) {\rm d} W(s) \right)\\
&\pm \frac{1}{2} \int_{a}^{t_{j(a)+1}} e^{i(t-s)\Delta} \left( \V^{2} \tv(s) \right) {\rm d}s \pm \frac{1}{2} \int_{[t]}^{t} e^{i(t-s)\Delta} \left(\V^{2} \tv(s) \right) {\rm d}s\\
&\pm \int_{a}^{t} e^{i(t-s)\Delta} \left(\dws \int_{[s]}^{s} e^{i(s-r)\Delta} \left( \phi_{m}(\|v\|_{\xX_{2}(0,s)}) \nN(v(r)) \right) {\rm d}r \right) {\rm d}s\;. 
\end{aligned}
\end{equation}
We again introduce some notations to simplify the above expression. Let
\begin{itemize}
\item $M_{1}(a,t) = -i \int_{a}^{t} e^{i(t-s)\Delta} \left( \dws \cdot e^{i(s-[s])\Delta} w([s]) \right) {\rm d}s$,
\item $M_{2}(a,t) = - \int_{a}^{t} e^{i(t-s)\Delta} \left[ \dws \int_{[s]}^{s} e^{i(s-r)\Delta} \left( \dws \cdot w(r) \right) {\rm d}r \right] {\rm d}s$, 
\item $M_{3}(a,t) = \pm i \int_{t_{j(a)+1}}^{[t]} e^{i(t-s)\Delta} \left( (\tv(s)-\tv([s])) {\rm d} W(s) \right)$, 
\item $g_{1}(a,t) = \pm \frac{1}{2} \int_{t_{j(a)+1}}^{[t]} e^{i(t-s)\Delta} \left[ \V^{2}\tv(s)- \frac{2 \Delta_{j(s)} W}{t_{j(s)+1}-t_{j(s)}} \int_{[s]}^{s} e^{i(s-r)\Delta}\left( \frac{\Delta_{j(s)}W}{t_{j(s)+1}-t_{j(s)}}\tv(r)\right) {\rm d}r \right] {\rm d} s$, 
\item $g_{2}(a,t) = \pm i \int_{a}^{t_{j(a)+1}} e^{i(t-s)\Delta} \left( w(s) {\rm d} W(s) \right) \pm \int_{[t]}^{t} e^{i(t-s)\Delta} \left( w(s) {\rm d} W(s) \right)$, 
\item $g_{3}(a,t) = \pm \frac{1}{2} \int_{a}^{t_{j(a)+1}} e^{i(t-s)\Delta} \left( \V^{2} \tv(s) \right) {\rm d}s \pm \frac{1}{2} \int_{[t]}^{t} e^{i(t-s)\Delta} \left(\V^{2} \tv(s) \right) {\rm d}s$, 
\item $g_{4}(a,t) = \pm \int_{a}^{t} e^{i(t-s)\Delta} \left(\dws \int_{[s]}^{s} e^{i(s-r)\Delta} \left( \phi_{m}(\|v\|_{\xX_{2}(0,s)}) \nN(v(r)) \right) {\rm d}r \right) {\rm d}s$. 
\end{itemize}
Now, we may rewrite the equation for $w$ as 
\begin{equation}
\begin{aligned}
w(t) = &e^{it\Delta}w(a ) - i \int_{a}^{t} e^{i(t-s)\Delta} \left(\phi_{m}(\|v\|_{\xX_{2}(0,s)})\nN(v(s)) - \phi_{m}(\|\tv\|_{\xX_{2}(0,s)}) \nN(\tv(s)) \right) {\rm d}s\\
&+\sum_{i} M_{i}(a,t) + \sum_{i} g_{i}(a,t)\;. 
\end{aligned}
\end{equation}
We first summarize the estimates for $M_i$ and $g_i$ in the following lemma, which is analogues to Lemma \ref{lem: stabconsou}, ~\ref{lem: stable}, and~\ref{lem: estimatefore}. 

\begin{lem}\label{lem:esfornewe}
Let $[a,b]\subset [a,T] \subset [c,d]$ and $d-c\sim h$. We have the following: 
\begin{itemize}
\item Estimates for  $M_{1}$, $M_{2}$ and $M_{3}$. There exist $\theta>0$ and  $M_{i}^{*}(t),\tilde{M}_{i}^{*}$ such that
\begin{equation}
\|S_{i}(a,t)\|_{L_{x}^{10}} \lesssim M_{i}^{*}(t), \quad \|S_{i}(a,t)\|_{L_{x}^{2}} \lesssim \tilde{M}_{i}^{*},
\end{equation}
uniformly in $t \in [a,T]$ and for all $i=1,2,3$, and
\begin{equation}
\begin{aligned}
&\|M_{i}^{*}(t)\|_{L_{\omega^{\rho}}L_{t}^{5}(c,T)} \lesssim h^{\theta}\|w\|_{L^{\rho}_{\omega}\xX(0,T)}, \quad i=1,2.\\
&\|M_{3}^{*}(t)\|_{L_{\omega}^{\rho}L_{t}^{5}[c,T]} + \|\tilde{M}_{3}\|_{L^{\rho}_{\omega}} \lesssim \|\pi^{(n)}\|^{\theta}.
\end{aligned}
\end{equation}
\item Estimates for $g_{1}$ and $g_{2}$. There exist $g_{i}^{*}(t)$ and $\tilde{g}_{i}^{*}$ for $i=1,2$ such that
\begin{equation}
\|g_{i}(a,t)\|_{L_{x}^{10}} \lesssim g_{i}^{*}(t), \quad \|g_{i}(a,t)\|_{L_{x}^{2}} \lesssim \tilde{g}_{i}^{*}, \quad t \in [c,T]\;, 
\end{equation}
and
\begin{equation}
\|g_{i}^{*}\|_{L_{\omega}^{\rho}L_{t}^{5}(c,T)}+\|\tilde{g}_{i}^{*}\|_{L^{\rho}_{\omega}} \lesssim \|\pi^{(n)}\|^{\theta}
\end{equation}
\item Estimates for $g_{3}$. 
\begin{equation}
\|g_{3}(a,t)\|_{L^{2}_{x} \cap L_{x}^{10}} \lesssim \|\pi^{(n)}\|^{\theta}, \quad t \in [c,T].
\end{equation}
\item Estimate for $g_{4}$
\begin{equation}
\|g_{4}(a,t)\|_{\xX(a,b)} \lesssim \sup_{j} \|\Delta_{j}W\|_{L_{x}^{\infty}}\;. 
\end{equation}
\end{itemize}
All the bounds are allowed to depend on $m$, but are uniform in $t$. 
\end{lem}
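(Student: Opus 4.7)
The plan is to treat each term in turn, reusing verbatim the estimates of Sections~\ref{sec:unm_bd} and~\ref{sec:cor_stab} whenever the term is linear in $w$ or supported on a single cell $[t_j,t_{j+1}]$, and developing a genuinely new estimate only for $g_1$, which encodes the It\^o--Stratonovich cancellation. First, $M_1$ and $M_2$ are structurally identical to $S_1$ and $S_2$ of Subsection~\ref{sub111}: both are linear in $w$ and have the same Duhamel form, so the same martingale/Burkholder analysis used in Lemmas~\ref{lem: stabconsou} and~\ref{lem: stable}---together with the gain $h^\theta$ coming from restricting to the short interval $[c,T]$ of length $\sim h$, cf.\ Remark~\ref{remark: extrasmall1}---yields the required bounds with factor $h^\theta\|w\|_{L_\omega^\rho \xX(0,T)}$. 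For $M_3$, the idea is to use the genuine time regularity of $\tilde v = u_m$: Proposition~\ref{pr:unm_H1_time_cont} gives $\|\tilde v(s)-\tilde v([s])\|_{L_\omega^\rho L_x^2}\lesssim |s-[s]|^{\alpha'}$ for some $\alpha'>0$, and then Burkholder \eqref{eq: burkholder} applied to the stochastic integral, together with \eqref{eq: dispersive} applied to each $V_k(\tilde v(s)-\tilde v([s]))$, gives an $L_\omega^\rho L_t^5$ bound of size $\|\pi^{(n)}\|^\theta$. The maximal-in-$a$ control in $L_x^2$ follows from the same trick as in~\eqref{eq: onemorestable}.

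The main obstacle is $g_1$, which has no analogue in the earlier sections because there one compared two processes both solving discretised equations; here instead we compare $u_m^{(n)}$ with the true solution $u_m$, whose Duhamel expansion carries the It\^o--Stratonovich correction $\tfrac{1}{2}\int e^{i(t-s)\Delta}\V^2 \tilde v(s)\,ds$. The plan is to work cell by cell: on each $[t_j,t_{j+1}]$, I first integrate out the inner variable $r$ and replace $\tilde v(r)$ by $\tilde v(s)$ using Proposition~\ref{pr:unm_H1_time_cont} (the error being of order $\|\pi^{(n)}\|^{1/2-}$ in $L_\omega^\rho L_x^2$); up to that error, the bracketed integrand becomes essentially proportional to $\bigl(\V^2(x)-(\Delta_j W(x))^2/(t_{j+1}-t_j)\bigr)\tilde v(s,x)$. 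Since $\E[(\Delta_j W)^2]=\V^2\,(t_{j+1}-t_j)$, the centred fluctuations of $(\Delta_j W)^2/(t_{j+1}-t_j)$ admit, upon summation in $j$ and application of the discrete Burkholder inequality \eqref{eq: disburkholder} (crucially using the finite-dimensionality of the noise in~\eqref{eq: furas} to handle off-diagonal cross terms of the form $\Delta_j B_1 \Delta_j B_2$), an $L_\omega^\rho$ bound of order $\|\pi^{(n)}\|^\theta$. A Strichartz estimate \eqref{eq: stri}, together with the uniform $H^1$ bounds of Proposition~\ref{pr:unm_H1} (used to control $\V^2\tilde v$ and to absorb the $e^{i(s-r)\Delta}$ propagator after the replacement), then converts this into the required control on $\|g_1(a,t)\|_{L_x^{10}}$ and $\|g_1(a,t)\|_{L_x^2}$.

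The remaining pieces are easier. Both $g_2$ and $g_3$ are integrals over sub-cells of total length at most $\|\pi^{(n)}\|$ (the fragments $[a,t_{j(a)+1}]$ and $[[t],t]$ clipped from the endpoints): Burkholder's inequality applied to the two stochastic-integral pieces in $g_2$ (together with the bootstrap assumption \eqref{eq: twomoreas} on $\|w\|_{L_\omega^\rho \xX(0,T)}$) and a direct Strichartz estimate for $g_3$ (using the uniform $L_t^\infty L_x^2$ bound on $\tilde v$) immediately produce the $\|\pi^{(n)}\|^\theta$ factor, both pointwise in $t$ and after integrating in time. Finally, $g_4$ is handled exactly as $e_1$ in Lemma~\ref{lem: estimatefore}: Strichartz \eqref{eq: stri} reduces its $\xX(a,b)$-norm to an $L_t^1 L_x^2$ bound on $\phi_m(\|v\|_{\xX_2})\nN(v)$, which is bounded by $C_m$ thanks to the truncation, while $\sup_j\|\Delta_j W\|_{L_x^\infty}$ is extracted pointwise outside the nonlinear integral.
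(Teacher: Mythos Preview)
Your plan matches the paper's proof closely: $M_1,M_2$ are indeed handled identically to $S_1,S_2$; $M_3$ via H\"older continuity of the flow (Proposition~\ref{pr:unm_H1_time_cont}) plus Burkholder; $g_2,g_3$ via the short integration domain; and $g_4$ exactly as you say, via the truncation $\phi_m$. The treatment of $g_1$ is also the right idea---freeze the flow, extract the mean-zero increments $\delta_{ii'}-\Delta_jB_i\Delta_jB_{i'}/(t_{j+1}-t_j)$, and apply discrete Burkholder---and this is precisely what the paper does.

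There is one point in your $g_1$ argument that, as written, would not go through: you propose to replace $\tilde v(r)$ by $\tilde v(s)$ rather than by $\tilde v([s])=\tilde v(t_j)$. The martingale structure you invoke requires the $j$-th increment to have conditional mean zero given $\mathcal F_{t_j}$, and since $\tilde v(s)$ for $s\in(t_j,t_{j+1}]$ depends on the Brownian path on $[t_j,s]$, it is \emph{not} conditionally independent of $(\Delta_jW)^2$. The paper therefore freezes at $\tilde v(t_j)$ (this is the split $g_1=g_{11}+g_{12}$), and in addition freezes the outer propagator $e^{i(t-s)\Delta}$ at $e^{i(t-t_j)\Delta}$ via Lemma~\ref{le:semi_group_cont} (the split $g_{11}=g_{111}+g_{112}$): only after both replacements does the $s$-integral over $[t_j,t_{j+1}]$ collapse to $(t_{j+1}-t_j)\,e^{i(t-t_j)\Delta}\sum_{i,i'}V_iV_{i'}c^{(j)}_{ii'}\tilde v(t_j)$ with the genuinely mean-zero coefficients $c^{(j)}_{ii'}$. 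Your sentence ``the bracketed integrand becomes essentially proportional to $(\V^2-(\Delta_jW)^2/(t_{j+1}-t_j))\tilde v(s)$'' hides both replacements; the fix is easy, but both are needed for Burkholder to apply.
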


\begin{remark}
The exact value of $\theta$ may change from line to line, but we only need such $\theta$ be positive. The point is that when $h$ and $\|\pi^{(n)}\|$ are small enough (depending on $m$), the terms $h^{\theta}$ and $\|\pi^{(n)}\|^{\theta}$ could always overcome the loss of any large constant (depending on $m$). 
\end{remark}

\begin{proof}[Proof of Lemma \ref{lem:esfornewe}]
Since the structure of the proof is very similar to Lemma \ref{lem: estimatefore}, we will sketch for he  parts which are similar and only  focus  highlight the difference. The main difference will be in the estimates for $M_{3}$ $g_{1}$ and $g_{2}$, which rely on the H\"older continuity of the flow. $g_{1}$ turns out to be the most technically complicated term. 
  
The estimates for $M_{1}$ and $M_{2}$ are completely the same as those for $S_{1}$ and $S_{2}$. 

For $M_3$, one may take $M_{3}^{*}
(t):=\sup_{c\leq \tau\leq T}\|\int_{c}^{\tau}e^{i(t-s)}(\tilde{v}(s))-\tilde{v}([s])\|_{L_{x}^{10}}$, and $\tilde{M}_{3}^{*} := \sup_{c \leq \tau \leq T} \|\int_{c}^{\tau} e^{i(-s)} (\widetilde{v}(s))-\widetilde{v}([s])\|_{L_{x}^{2}}$. Arguing similarly as the bound for $S_{1}$, we get
\begin{equation}\label{eq: almost}
\|M_{3}^{*}(t)\|_{L_{\omega}^{\rho} L_{t}^{5}} + \|\tilde{M}_{3}\|_{L_{\omega}^{\rho}} \lesssim \|\tv(s)-\tv([s])\|_{L_{\omega}^{\rho}L_{x}^{2}}\;, 
\end{equation}
where we have enlarged the factor $h^{\theta}$ to order $1$. Using the H\"older continuity of the flow in Proposition~\ref{pr:unm_H1_time_cont}, and observe that $|s-[s]|\leq \|\pi^{n}\|$, we obtain
\begin{equation}
\|M_{3}^{*}(t)\|_{L_{\omega}^{\rho}L_{t}^{5}} + \|\tilde{M}^{3}\|_{L_{\omega}^{\rho}} \lesssim \|\pi^{(n)}\|^{\theta} \quad \text{for some} \phantom{1} \theta>0.
\end{equation}
Now we turn to the estimate for $g_{1}$. We spit it as
\begin{equation}
g_{1}(a,t) = g_{11}(a,t) + g_{12}(a,t)\;, 
\end{equation}
where
\begin{equation}
g_{11}(a,t) = \pm \frac{1}{2} \int_{t_{j(a)}+1}^{[t]}e^{i(t-s)\Delta} \left[ \V^{2} \tv([s]) - \frac{2\Delta_{j(s)}W}{t_{j(s)+1}-t_{j(s)}} \int_{[s]}^{s} \frac{\Delta_{j(s)}W}{t_{j(s)+1}-t_{j(s)}} \tv([s]) {\rm d}r \right] {\rm d}s\;,
\end{equation}
and
\begin{equation}
\begin{aligned}
g_{12}(a,t)=&\pm \frac{1}{2} \int_{t_{j(a)}+1}^{[t]} e^{i(t-s)\Delta} \bigg[ \V^{2} \big( \tv(s)-\tv([s]) \big)\\
&-\frac{2\Delta_{j(s)}W}{t_{j(s)+1}-t_{j(s)}} \int_{[s]}^{s} \left[ e^{i(s-r)\Delta} \left( \dws \cdot \tv(r) \right)  - \frac{\Delta_{j(s)}W}{t_{j(s)+1}-t_{j(s)}}\tv([s]) \right] {\rm d}r \bigg] {\rm d}s\;. 
\end{aligned}
\end{equation}
We only need to find the associated $g^{*}_{1i}(t)$ and $\tilde{g}^{*}_{1i} $ for $g_{1i}(a,t)$, $i=1,2$. Recall we are working on the simple noise $W(t,x) = \sum_i B_i(t) V_i(x)$ and $V_i$ are Schwartz functions. 

We first give the estimate for $g_{12}$. Recall from Proposition~\ref{pr:unm_H1_time_cont} that $\widetilde{v}$ is H\"older continuous with the estimate
\begin{equation}\label{eq: holderwork}
\|\widetilde{v}\|_{L^{2\rho}_{\omega} \cC_{t}^{\theta} L_{x}^{2}(0,1)} \leq C_{m},
\end{equation}
which also implies $\|\widetilde{v}\|_{L^{\rho}_{\omega} \cC_{t}^{\theta} L_{x}^{2}(0,1)} \leq C_{m}$.

Observe that $\|\tv(s)-\tv([s])\|_{L_{x}^{2}} \leq \|\pi^{(n)}\|^{\theta} \|\widetilde{v}\|_{\cC^{\theta}_{t}L_{x}^{2}}$, and $\|\tv([s])-\tv(r)\|_{L_{x}^{2}} \leq |t_{j(s)+1}-t_{j(s)}|^{\theta} \|\pi^{(n)}\|^{\theta} \|\widetilde{v}\|_{\cC^{\theta}_{t}L_{x}^{2}}$ for all $r \in ([s],s)$. We may use the pointwise estimates
\begin{equation}\label{eq: pointe}
\begin{aligned}
&\|e^{i(t-s)\Delta} \big(\V^2 (\tv(s)-\tv([s]))\big)\|_{L_{x}^{2} \cap L_{x}^{10}} \lesssim (t-s)^{-2/5} \|\pi^{(n)}\|^{\theta}\|\widetilde{v}\|_{C^{\theta}_{t}L_{x}^{2}}\;\\
&\left\| e^{i(t-s)\Delta} \left[ \frac{2\Delta_{j(s)}W}{t_{j(s)+1}-t_{j(s)}} \int_{[s]}^{s} \left[ e^{i(s-r)\Delta} \left( \dws \tv(r) \right) - \dws \tv([s]) \right] {\rm d}r \right] \right\|_{L_{x}^{2} \cap L_{x}^{10}}\\
&\lesssim (t-s)^{-\frac{2}{5}} \frac{\sum_{i=1}^{2}|\Delta_{j(s)}B_i|^{2}}{t_{j(s)+1}-t_{j(s)}} \left( t_{j(s)+1}-t_{j(s)} \right)^{\theta} \|\widetilde{v}\|_{\cC^{\theta}_{t}L_{x}^{2}}\\
&\lesssim (t-s)^{-\frac{2}{5}} \|\pi^{(n)}\|^{\frac{\theta}{2}} \; \sup_{j} \frac{ \sum_{i=1}^{2} |\Delta_j B_i|^{2}}{(t_{j+1}-t_{j})^{1-\frac{\theta}{2}}} \cdot \|\widetilde{v}\|_{\cC^{\theta}_{t}L_{x}^{2}}\;. 
\end{aligned}
\end{equation}
where we have used the dispersive estimate  $\|e^{i(t-s)\Delta}\|_{L_{x}^{9/10}\rightarrow L_{x}^{10}}\lesssim (t-s)^{-2/5}$, the unitarity of $e^{it\Delta}$ in $L_{x}^{2}$, and the estimate for the difference between $e^{i(s-r)\Delta}(V(x)\tilde{v}(r))$ and $V(x)\tilde{v}([s])$ in Lemmas~\ref{le:semi_group_cont} and~\ref{le:op_change_order}. Thus, we get
\begin{equation}\label{eq: 333}
\|g_{12}(a,t)\|_{L_{x}^{2}\cap L_{x}^{10}} \lesssim 
\left( \int_{0}^{1}(t-s)^{-\frac{2}{5}} {\rm d}s \right) \left\{\|\pi^{(n)}\|^{\frac{\theta}{2}} \sup_{j}\frac{\sum_{i=1}^{2} |\Delta_j B_i|^{2}}{(t_{j+1}-t_{j})^{1-\frac{\theta}{2}}} \|\widetilde{v}\|_{\cC^{\theta}_{t}L_{x}^{2}}+\|\pi^{(n)}\|^{\theta}\|\tilde{v}\|_{C^{\theta}_{t}L_{x}^{2}}\right\}\;. 
\end{equation}
We can take $g_{12}^{*}(t)$ and $\tilde{g}_{12}$ both to be right hand side of \eqref{eq: 333}, the desired estimate then follows from \eqref{eq: holderwork} and the observation that $\sup_{j}\|\frac{|\Delta_j B_i|^{2}}{(t_{j+1}-t_{j})^{1-\theta/2}}\|_{L^{2\rho}}\lesssim 1$. 

We now turn to for $g_{11}$. Observe
\begin{equation}
g_{11}(a,t) = \sum_{[t_{j},t_{j+1}]\subset [a,t]}\int_{t_{j}}^{t_{j+1}} e^{i(t-s)\Delta} \left( \V^{2} \tv(t_{j}) - \sum_{i,i'=1,2} \frac{\Delta_j B_i \Delta_j B_{i'}}{(t_{j+1}-t_{j})^{2}} V_{i}V_{i'} \int_{t_{j}}^{s} \widetilde{v}([s]) {\rm d}r \right) {\rm d}s;. 
\end{equation}
We further split it as 
\begin{equation}
g_{11}(a,t)=g_{111}(a,t)+g_{112}(a,t)
\end{equation}
where
\begin{equation}
g_{111}(a,t) = \sum_{[t_{j},t_{j+1}]\subset [a,t]}\int_{t_{j}}^{t_{j+1}} e^{i(t-s)\Delta} \left( \V^{2} \tv(t_{j}) - \sum_{i,i'=1,2} \frac{\Delta_j B_i \Delta_j B_{i'}}{(t_{j+1}-t_{j})^{2}} V_{i}V_{i'} \int_{t_{j}}^{s} \widetilde{v}([s]) {\rm d}r \right) {\rm d}s\;, 
\end{equation}
and $g_{112}=g_{11}-g_{111}$.

The idea is because of Lemma~\ref{le:semi_group_cont}, we can trade some regularity to replace the $e^{i(t-s)\Delta}$ in $[t_{j},t_{j+1}]$ by $e^{i(t-t_{j})}$. The estimate of $g_{112}$ is similar to $g_{12}$, and we omit the details. We focus on the estimate of $g_{111}$, and construct the associated $g_{111}^{*}(t)$ and $\tilde{g}_{111}^{*}$. Observe that
\begin{equation}
g_{111}(a,t) = \pm \frac{1}{2} \sum_{[t_{j},t_{j+1}] \subset [a,t]} \left[ (t_{j+1}-t_{j}) e^{i(t-t_{j})\Delta} \sum_{i,i'=1,2} V_{i} V_{i'} \left(\delta_{ii'} - \frac{\Delta_j B_i \Delta_j B_{i'}}{t_{j+1}-t_{j}}\right) \right]\;, 
\end{equation}
where $\delta_{ii'}=1$ for $i=i'$, and $0$ otherwise. For notation convenience, we short $\delta_{ii'}-\frac{\Delta_j B_i \Delta_j B_{i'}}{(t_{j+1}-t_{j})}$ as $c_{ii'}^{(j)}$. Let
\begin{equation}
\begin{aligned}
g_{111}^{*}(t) &:= \sup_{j_{0}} \left\| \sum_{j\leq j_{0}} (t_{j+1}-t_{j}) e^{i(t-t_{j})\Delta} \sum_{i,i'} V_{i}V_{i'} c_{ii'}^{(j)} \right\|_{L_{x}^{10}}\\
g_{111}^{*} &:= \sup_{j_{0}} \left\| \sum_{j\leq j_{0}} (t_{j+1}-t_{j}) e^{i(-t_{j})\Delta}\sum_{i,i'} V_{i}V_{i'} c_{ii'}^{(j)} \right\|_{L_{x}^{2}}\;. 
\end{aligned}
\end{equation}
The key observation is that $c^{(j)}_{ii'}$ has mean $0$. Thus, the two quantities in inside the norms above are discrete martingales (in $j$) in $L_x^{10}$ and $L_x^2$ respectively (fixing $t$). 

The remaining arguments are similar to that for $\sm$ in Lemma~\ref{le:source_max}. We sketch the estimate for $g_{111}^{*}(t)$ but skip that of $\tilde{g}_{111}^{*}$.

Again, by \eqref{eq: min}, we may fix $t$, and just estimate
$\|g_{111}^{*}(t)\|_{L^{\rho}_{\omega}L_{x}^{10}}$. It follows from the Burkholder inequality \eqref{eq: burkholder} that
\begin{equation}\label{eq: jiayou}
\|g_{111}^{*}(t)\|_{L^{\rho}_{\omega}L_{x}^{10}}^{\rho} \lesssim \left\|\sum_{j} (t-t_{j})^{-\frac{4}{5}} (t_{j+1}-t_{j})^{2} \left( \sum_{i,i'}c_{ii'}^{(j)} \right)^{2} \right\|_{L_{\omega}^{\rho/2}}^{\rho/2}. 
\end{equation}
Also note that $c_{i'i}^{(j)}$ are independent for different $j$. Observe
\begin{equation}
\left\| \sum_{j} (t-t_{j})^{-\frac{4}{5}} (t_{j+1}-t_{j})^{2} \left(\sum_{i,i'} c_{ii'}^{(j)} \right)^{2} \right\|_{L_{\omega}^{\rho/2}} \lesssim \sum_{j} (t-t_{j})^{-\frac{4}{5}} (t_{j+1}-t_{j})^{2} \lesssim \|\pi^{(n)}\|\;. 
\end{equation}
The estimate for $g_{111}^{*}$ now follows, and we can conclude the bound for $g_{1}(a,t)$. 

The estimate for $g_{2}(a,t)$ is exactly the same as that for $M_{3}$, and we omit the details. 

We now turn to $g_{3}(a,t)$. One can directly apply the triangle inequality and dispersive estimate \eqref{eq: dispersive} to derive
\begin{equation}
\|g_{3}(a,t)\|_{L_{x}^{2}\cap L_{x}^{10}}\lesssim \int_{[t_{j(a)+1},a]\cup [[t],t]]}|t-s|^{-\frac{2}{5}}ds \lesssim \|\pi^{(n)}\|^{\frac{1}{5}}\;, 
\end{equation}
where we have used $t_{j(a)+1}-a\leq \|\pi^{n}\|$ and $t-[t]\leq \|\pi^{n}\|$. 

We finally turn to $g_{4}(a,t)$. This estimate is similar to \eqref{eq: redobystri},  \eqref{eq: pe} as well as the term $e_{3}$ in Lemma~\ref{lem: estimatefore}. Observe
\begin{equation}
\|\phi_{m}(\tilde{v})\nN(v)\|_{L_{t}^{1}L_{x}^{2}}\leq C_{m},
\end{equation} 
and the desired estimate follows. We have thus completed the proof for all the bounds in Lemma~\ref{lem:esfornewe}. 
\end{proof}

\subsection{Proof of Lemma \ref{lem: lll}}

We are now ready to prove Lemma~\ref{lem: lll}, which will then conclude this section. The arguments are very similar to those for Lemma~\ref{lem: onemorebootstrap}. 

Let $\eta>0$ be a small parameter (independent of $n$). For almost surely every $\omega$, we take a (random) partition of the interval $[c,T] $ into $c=a_{0}<a_{1}<...a_{J}=T$ such that for every $[a_{l}, a_{l+1}]$, the following properties hold: 
\begin{itemize}
\item For $v$,  either $\|v\|_{\xX_{2}[a_{l},a_{l+1}]}\leq \eta$ or $\phi_{m}(\|v\|_{\chi_{2}(0,s)})=0$ for $s\geq a_{l}$,
\item For $\tv$, either $\|\widetilde{v}\|_{\chi_{2}[a_{l},a_{l+1}]} \leq \eta$ or $\phi_{m}(\|\tv\|_{\xX_{2}(0,s)})=0$ for $s\geq a_{l}$. 
\end{itemize}
Thus, there can be most $J\sim m^{5}/\eta^{5}+1\leq C_{m}$ such  intervals. Observe that in every $[a_{l}, a_{l+1}]$, one has 
\begin{equation}
\left\| \phi_{m}(\|v\|_{\xX_{2}(0,s)}) \nN(v) - \phi_{m}(\|\tv\|_{\xX_{2}(0,s)}) \nN(\tv) \right\|_{L_{t}^{1}L_{x}^{2}(a_{l},a_{l+1})} \lesssim \eta^{4}\;. 
\end{equation}
To see why the above bound is true, one just needs to consider the following cases separately: 
\begin{itemize}
\item Both $\phi_{m}(\|v\|_{\xX_{2}(0,s)})$ and $ \phi_{m}(\|\tv\|_{\xX_{2}(0,s)})$ are $0$ for $s\geq a_{l}$. Then there is nothing to prove. 
\item Neither $\phi_{m}(\|v\|_{\xX_{2}(0,s)})$ nor $ \phi_{m}(\|\tv\|_{\xX_{2}(0,s)})$ is $0$, then one has $\|v\|_{\xX_{2}(a_{l},a_{l+1})} \leq \eta$ and $\|\widetilde{v}\|_{\xX_{2}(a_{l},a_{l+1})}\leq \eta$. We then split the whole term as
\begin{equation}
\phi_{m}(\|v\|_{\xX_{2}(0,s)}) \left( \nN(v)-\nN(\tv) \right)  + (\phi_{m}(\|v\|_{\xX_{2}(0,s)}) - \phi_{m}(\|\tv\|_{\xX_{2}(0,s)})) \nN(\widetilde{v})
\end{equation}
and estimate separately with observation $|(\phi_{m}(\|v\|_{\chi_{2}(0,s)})-\phi_{m}(\|\tv\|_{\xX_{2}(0,s)}))|\lesssim \|w\|_{\xX(0,a_{l+1})}$.
\item Without loss of generality, we only consider $ \phi_{m}(\|v\|_{\xX_{2}(0,s)}) = 0$ for $s \geq a_{l}$ and $\|\widetilde{v}\|_{\xX_{2}(a_{l},a_{l+1})} \leq \eta$. We observe
\begin{equation}
\phi_{m}(\|v\|_{\xX_{2}(0,s)})\nN(v)-\phi_{m}(\|\tv\|_{\xX_{2}(0,s)}) \nN(\tv) = \phi_{m}(\|v\|_{\xX_{2}(0,s)})-\phi_{m}(\|\tv\|_{\xX_{2}(0,s)}) \nN(\tv)\;,
\end{equation}
and observe again $|(\phi_{m}(\|v\|_{\chi_{2}(0,s)})-\phi_{m}(\|\tv\|_{\xX_{2}(0,s)}))|\lesssim \|w\|_{\xX(0,a_{l+1})}$. The bound for this situation also follows. 
\end{itemize}

Now, in every interval $[a_{l}, a_{l+1}]$, using Strichartz and triangle inequalities, we have
\begin{equation}
\|w\|_{\xX(a_{l},a_{l+1})} \lesssim \|w(a_{l})\|_{L_{x}^{2}} + \eta^{4} \|w\|_{\xX_{2}(0, a_{l+1})} + \sum_{i} \|S_{i}\|_{\xX(a_{l},a_{l+1})}\;. 
\end{equation}
Absorbing $\eta^{4}\|w\|_{\xX_{2}(0, a_{l+1})}$ into $\|w\|_{\xX(0,a_{l+1})}$, and applying the estimates in Lemma~\ref{lem:esfornewe}, we get
\begin{equation}
\begin{aligned}
\|w\|_{\xX(0,a_{l+1})} \lesssim &\|w\|_{\xX(0,a_{l})} + \sum_{i} \|M_{i}^{*}(t)\|_{L_{t}^{5}(c,T)} + \|\tilde{M}_{i}^{*}|\\
+&\sum_{i=1,2} \|g_{i}^{*}(t)\|_{L_{t}^{5}(c,T)} + \tilde{g}_{i}^{*}  + \|\pi^{n}\|^{\theta} + C_{m} \sup_j \|\Delta_{j}W\|_{L_{x}^{\infty}};. 
\end{aligned}
\end{equation}
Iterating the above bound $\sim m^{5}/\eta^{5}+1\leq C_{m}$ times, we get
\begin{equation}
\begin{aligned}
\|w\|_{\xX(0,T)} \leq &C_{m} \Big\{\|w\|_{\xX[0,c]} + \sum_{i}\|M_{i}^{*}(t)\|_{L_{t}^{5}(c,T)} + \|\tilde{M}_{i}^{*}|\\
+&\sum _{i=1,2} \|g_{i}^{*}(t)\|_{L_{t}^{5}(c,T)} + \tilde{g}_{i}^{*} +\|\pi^{n}\|^{\theta} + C_{m} \sup_j \|\Delta_{j}W\|_{L_{x}^{\infty}}\Big\}\;. 
\end{aligned}
\end{equation}
Taking $L^{\rho}_{\omega}$-norm on both sides, we obtain
\begin{equation}
\|w\|_{L^{\rho}_{\omega}\xX(0,T)} \leq C_{m} \Big(\|w\|_{\xX[0,c]} + h^{\theta}\|w\|_{L^{\rho}_{\omega}} + \|\pi^{(n)}\|^{\theta} \Big)\;. 
\end{equation}
We now choose $h$ sufficiently small depending on $C_{m}$, and $\|\pi^{(n)}\|$ small enough  depending on $C_{m}$ and $\delta_{0}$. The desired bound then follows.

\appendix

\section{Some useful inequalities}
We will need following regularity estimate for the linear Sch\"odinger estimate, it is natural in the sense the natural scaling of Schr\"odinger will trade two space derivative into one time derivative.

\begin{lem} \label{le:semi_group_cont}
	For every $0<\alpha<1$, we have
	\begin{equation}
	\|e^{i t \Delta} \psi - \psi\|_{L_{x}^{2}(\R^d)} \lesssim_{\alpha,d} t^{\frac{\alpha}{2}} \|\psi\|_{H^{\alpha}(\R^d)}, 
	\end{equation}
	uniformly over $t \in (0,1)$. 
\end{lem}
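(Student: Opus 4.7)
The plan is to proceed via the Fourier transform. Since $e^{it\Delta}$ acts as the Fourier multiplier $e^{-it|\xi|^2}$, Plancherel's theorem gives
\begin{equation*}
\|e^{it\Delta}\psi - \psi\|_{L_x^2}^2 = \int_{\R^d} |e^{-it|\xi|^2} - 1|^2 \, |\widehat{\psi}(\xi)|^2 \, {\rm d}\xi.
\end{equation*}
So the lemma reduces to the pointwise (in $\xi$) inequality
\begin{equation*}
|e^{-it|\xi|^2} - 1| \lesssim_{\alpha} t^{\alpha/2} \langle \xi \rangle^{\alpha}
\end{equation*}
for $\alpha \in (0,1)$ and $t \in (0,1)$, since then the integral is bounded by $C_\alpha \, t^\alpha \|\psi\|_{H^\alpha}^2$.

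To establish this pointwise bound, I would use the two elementary estimates $|e^{i\theta}-1|\leq |\theta|$ and $|e^{i\theta}-1|\leq 2$, and split according to whether $t^{1/2}|\xi| \leq 1$ or $t^{1/2}|\xi|>1$. In the first regime, $|e^{-it|\xi|^2}-1| \leq t|\xi|^2 = (t^{1/2}|\xi|)^2 \leq (t^{1/2}|\xi|)^{\alpha} = t^{\alpha/2}|\xi|^\alpha$, using $\alpha < 2$ and $t^{1/2}|\xi|\leq 1$. In the second regime, $|e^{-it|\xi|^2}-1| \leq 2 \leq 2(t^{1/2}|\xi|)^\alpha = 2t^{\alpha/2}|\xi|^\alpha$. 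Combining and using $|\xi|^\alpha \leq \langle\xi\rangle^\alpha$ yields the claim.

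There is essentially no obstacle: this is a textbook Fourier-side interpolation between the trivial bound $\|e^{it\Delta}\psi - \psi\|_{L^2} \leq 2\|\psi\|_{L^2}$ and the classical bound $\|e^{it\Delta}\psi - \psi\|_{L^2} \leq t\|\Delta\psi\|_{L^2}$, and the only minor care needed is in choosing the right threshold $t^{1/2}|\xi| \sim 1$ so that the powers of $t$ and $|\xi|$ come out as $t^{\alpha/2}$ and $|\xi|^{\alpha}$ respectively rather than, say, $t^\alpha$ and $|\xi|^{2\alpha}$.
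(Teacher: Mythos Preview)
Your proof is correct and follows essentially the same approach as the paper: apply Plancherel, split at the threshold $|\xi|\sim t^{-1/2}$, and use $|e^{it|\xi|^2}-1|\lesssim t|\xi|^2$ in the low-frequency region and $|e^{it|\xi|^2}-1|\leq 2$ in the high-frequency region.
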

\begin{proof}
Applying Plancherel Theorem, we have
\begin{equation}
\|e^{it\Delta}\psi-\psi\|_{L_{x}^{2}}=\|(1-e^{it\|\xi|^{2}})\hat{\psi}(\xi)\|_{L_{\xi}^{2}}.
\end{equation}
Split the $\R^{d}_{\xi}$ into the $|\xi|\leq \frac{1}{t^{1/2}}$ and $\xi\geq \frac{1}{t^{1/2}}$, use $|e^{it|\xi|^{2}}-1|\lesssim t|\xi|^{2}$ in the first region, and $|e^{it|\xi|^{2}}-1|\leq 2$ in the second one, then the desired estimate follows.
\end{proof}
This has an immediate consequence which will be useful in our analysis, we summarize it as the following lemma

\begin{lem} \label{le:op_change_order}
Let $V$ be some nice Schwartz function, let $\tau_{1}\leq \tau_{2}\leq \tau_{3}$, $\tau_{3}-\tau_{1} \leq 1$, then we have
\begin{equation}
\|e^{i(\tau_{3}-\tau_{2})\Delta} \big(V e^{i(\tau_{2}-\tau_{1})\Delta} \psi\big)-\psi\|_{L_{x}^{2}}\lesssim (\tau_{3}-\tau_{1})^{\alpha/2}\|\psi\|_{H^{\alpha}}
\end{equation}
The bounds depend on $V$ but are uniform in $\psi$ and the $\tau_i$'s. 
\end{lem}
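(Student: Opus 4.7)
The plan is to telescope the expression via triangle inequality, combining Lemma~\ref{le:semi_group_cont} with the $L^2$-unitarity of the Schr\"odinger propagator and a standard $H^\alpha$ product estimate for multiplication by $V$. (I read the right-hand side of the stated difference as $V\psi$ rather than $\psi$, since otherwise the difference does not tend to $0$ as $\tau_3-\tau_1\to 0$, and this is exactly the form in which the lemma is invoked in~\eqref{eq: pointe}.)

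First I would add and subtract $e^{i(\tau_3 - \tau_2)\Delta}(V\psi)$ to split the difference as
\begin{equation*}
e^{i(\tau_3 - \tau_2)\Delta}\bigl(V e^{i(\tau_2 - \tau_1)\Delta}\psi\bigr) - V\psi = e^{i(\tau_3 - \tau_2)\Delta}\Bigl(V\bigl(e^{i(\tau_2 - \tau_1)\Delta}\psi - \psi\bigr)\Bigr) + \Bigl(e^{i(\tau_3 - \tau_2)\Delta}(V\psi) - V\psi\Bigr).
\end{equation*}
For the first summand, unitarity of $e^{i(\tau_3 - \tau_2)\Delta}$ on $L^2_x$ together with $\|V\|_{L^\infty_x} \lesssim_V 1$ will reduce matters to controlling $\|e^{i(\tau_2 - \tau_1)\Delta}\psi - \psi\|_{L^2_x}$, which Lemma~\ref{le:semi_group_cont} bounds by $(\tau_2-\tau_1)^{\alpha/2}\|\psi\|_{H^\alpha}$. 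For the second summand, I would apply Lemma~\ref{le:semi_group_cont} directly with $V\psi$ as the input, yielding $(\tau_3-\tau_2)^{\alpha/2}\|V\psi\|_{H^\alpha}$. Summing and using $\max(\tau_2-\tau_1,\,\tau_3-\tau_2)\le \tau_3-\tau_1$ then gives the claimed $(\tau_3-\tau_1)^{\alpha/2}$ rate.

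The only non-routine ingredient is the $H^\alpha$ product estimate $\|V\psi\|_{H^\alpha}\lesssim_V \|\psi\|_{H^\alpha}$ needed to close the second summand. For $\alpha \in (0,1)$ and $V$ Schwartz this is classical and can be obtained, for example, from the Gagliardo--Slobodeckij characterisation of $H^\alpha$: one splits the defining double integral into a region where $V$ provides the variation (bounded using $\|\nabla V\|_{L^\infty}$) and one where $\psi$ does (bounded using $\|V\|_{L^\infty}$); alternatively a Littlewood--Paley paraproduct decomposition works uniformly. In either case the dependence on $V$ enters only through a finite Schwartz seminorm, which is harmless for the intended application. This is the only even mildly substantive step; everything else in the argument is just the triangle inequality together with the previous lemma.
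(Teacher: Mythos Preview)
Your proof is correct and is exactly the natural elaboration of what the paper intends: the paper does not give a detailed argument but simply describes the lemma as an immediate consequence of Lemma~\ref{le:semi_group_cont}, and your telescoping together with the product estimate $\|V\psi\|_{H^\alpha}\lesssim_V\|\psi\|_{H^\alpha}$ is precisely how one unpacks that. Your observation that the subtracted term should read $V\psi$ rather than $\psi$ is also correct, and is consistent with how the lemma is actually applied in~\eqref{eq: pointe}.
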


\endappendix

\bibliographystyle{Martin}
\bibliography{Refs}

\end{document}